\documentclass[11pt]{article}

\usepackage{amsmath}
\usepackage{amssymb}
\usepackage{amsthm}
\usepackage{graphics}
\usepackage[latin1]{inputenc}
\usepackage[english]{babel}
\usepackage[T1]{fontenc}
\usepackage{enumerate}
\usepackage{color}
\usepackage{hyperref}

\newtheorem{thm}{Theorem}[section]
\newtheorem{prop}{Proposition}[section]
\newtheorem{coro}{Corollary}[section]
\newtheorem{lemma}{Lemma}[section]
\newtheorem{rem}{Remark}[section]
\newtheorem{rems}{Remarks}[section]
\newtheorem{defi}{Definition}[section]
\newtheorem{hypo}{Hypothesis}[section]

\newcommand{\R}{\mathbb{R}}             
\newcommand{\N}{\mathbb{N}}             

\newcommand{\e}{\epsilon}

\newcommand{\Dn}{\Lambda_{g} }
\newcommand{\Dno}{\Lambda_{g_{0}} }
\newcommand{\Dngammaone}{\Lambda_{g,\Gamma_1,\Gamma_1} }
\newcommand{\Dngammaonetilde}{\Lambda_{\tilde{g},\Gamma_1,\Gamma_1} }

\newcommand{\Dngammaitilde}{\Lambda_{\tilde{g},\Gamma_i,\Gamma_i} }

\newcommand{\Dngammaij}{\Lambda_{g,\Gamma_{i},\Gamma_{j}} }
\newcommand{\Dngammai}{\Lambda_{g,\Gamma_{i},\Gamma_{i}} }

\newcommand{\Dngammaqzero}{\Lambda_{g_{0},q,\Gamma_{0},\Gamma_{0}} }
\newcommand{\Dngammaqone}{\Lambda_{g_{0},q,\Gamma_{1},\Gamma_{1}} }
\newcommand{\Dngammaqzerotoone}{\Lambda_{g_{0},q,\Gamma_{0},\Gamma_{1}} }
\newcommand{\Dngammaqij}{\Lambda_{g_{0},q,\Gamma_{i},\Gamma_{j}} }

\DeclareMathOperator{\Span}{span}

\newcommand{\ds}{\displaystyle}

\newcommand{\Section}[1]{\section{#1} \setcounter{equation}{0}}

\begin{document}
\title{Stability in the anisotropic Calder\'{o}n problem for Painlev\'e-Liouville Riemannian manifolds}
\author{Thierry Daud\'e \footnote{Research supported by the project ScattHoloGR (ANR-25-CE40-4883) funded by the French National Research Agency (ANR)} $^{\,1}$, Niky Kamran \footnote{Research supported by NSERC grant RGPIN 105490-2025} $^{\,2}$ and Fran{\c{c}}ois Nicoleau \footnote{Research supported by the French GDR Dynqua} $^{\,3}$\\[12pt]
$^1$  \small Universit\'e Marie et Louis Pasteur, CNRS, LmB (UMR 6623), F-25000, Besan{\c{c}}on, France. \\
\small CNRS - Universit\'e de Montr\'eal CRM - CNRS \\
\small  Email address: thierry.daude@univ-fcomte.fr, \\
$^2$ \small Department of Mathematics and Statistics, McGill University,\\ \small  Montreal, QC, H3A 0B9, Canada. \\
\small Email: niky.kamran@mcgill.ca \\
$^3$  \small  Laboratoire de Math\'ematiques Jean Leray, UMR CNRS 6629, \\ \small 2 Rue de la Houssini\`ere BP 92208, F-44322 Nantes Cedex 03. \\
\small Email: francois.nicoleau@univ-nantes.fr }





\maketitle


\begin{abstract} 
We study the question of stability of the global and partial anisotropic Calder\'on inverse problems for the class of Painlev\'e-Liouville Riemannian manifolds, that is compact $n$-dimensional manifolds with boundary $(M,g)$, where $M=[0,1]\times K\,$, $K$ is any smooth closed connected orientable manifold of dimension $n-1$ endowed with a Riemannian metric $g_K$, and $g=\alpha^4 g_{0}$ is any conformal deformation of the product metric $g_{0}=dx^2+g_{K}$ on $M$ which is compatible with the Painlev\'e block-separability of the Laplace-Beltrami operator $\Delta_{g_0}$. Given a pair of Painlev\'e-Liouville Riemannian manifolds $(M,g)$ and $(M,\tilde{g})$ satisfying the technical hypothesis \ref{Hyp}, denoting the corresponding Dirichlet-to-Neumann maps by $\Lambda_{g}$ and $\Lambda_{\tilde{g}}$, and assuming that $\lVert \Lambda_{g}-\Lambda_{\tilde{g}}\rVert_{\mathcal{B}(H^{1/2}(\partial M), H^{-1/2}(\partial M))}\ = \epsilon $, we show a logarithmic stability result for the global anisotropic Calder\'on problem in Theorem~\ref{Main1} which says that there exists  constants $C$ and $0<\theta<1$ such that $\| \alpha - \tilde{\alpha} \|_{C^{0,r}(M)} \leq C \left( \ln \frac{1}{\epsilon} \right)^{-\theta}$ for some $0<r<1$. Similar results are obtained in Theorem~\ref{Main2} for the partial anisotropic Calder\'on problem, corresponding to the case where the data are measured on only one connected component of the boundary. We emphasize the fact that even though we require the compatibility of the conformal factor $\alpha$ with the block-separability of the the Laplace-Beltrami operator $\Delta_{g_0}$ in our hypotheses, our transversal manifolds $(K,g_K)$ are closed manifolds with no restriction on the injectivity of the geodesic ray transform. In particular, the transversal manifold could be the round sphere which is the classical counterexample to injectivity of the geodesic ray transform. This implies that the powerful Complex Geometrical Optics techniques of \cite{DKSU2009, DKLS2016} cannot be applied in our setting. 

\end{abstract}

\tableofcontents


\vspace{1cm}

\noindent \textit{Keywords}. Anisotropic Calder\'{o}n inverse problem, Painlev\'e manifolds, moment problems, Weyl-Titchmarsh functions, Carleman estimates. \\


\noindent \textit{2010 Mathematics Subject Classification}. Primaries 81U40, 35P25; Secondary 58J50.


\tableofcontents


\Section{Introduction}
The anisotropic Calder\'on problem is an important classical inverse problem which consists in recovering, up to some natural gauge equivalences, the metric of a compact Riemannian manifold with boundary $(M,g)$ from the knowledge of the Dirichlet-to-Neumann (DN) map for the Laplace-Beltrami operator $-\Delta_g$ at some fixed frequency lying outside the Dirichlet spectrum (see \cite{U1, U2} for a general presentation of this classical problem). 

The question of \emph{stability}, namely whether one can quantify in a stable way the error in the solution of the inverse problem assuming that the DN map is only known up to an error term measured in an appropriate norm, is one of significant complexity due to the fundamentally ill-posed nature of the Calder\'on inverse problem (see for instance \cite{Ale1988, Ale1997, Ale2007, Man2001}). In this paper, we consider question of stability for a special class of Riemannian manifolds which are of geometric interest and for which the powerful methods based on the reconstruction of the metric via complex geometric optics solutions and limiting Carleman weights are not directly applicable. We now proceed to define this class of manifolds - that we will call \emph{Painlev\'e-Liouville Riemannian manifolds} - and state our main results.

Let $M$ denote an $n$-dimensional smooth compact orientable product manifold with boundary, of the form
\begin{equation} \label{M}
M=[0,1]\times K\,,
\end{equation}
where $K$ is a smooth closed connected orientable manifold of dimension $n-1$. The two connected components of the boundary of $M$ will be denoted by $\Gamma_0$ and $\Gamma_1$,
\[
\partial M=\Gamma_0 \sqcup \Gamma_1\,,\quad \Gamma_0=\{0\}\times K\,, \quad \Gamma_1=\{1\}\times K\,,
\]
and will be identified with $K$. The points of $M$ will be written as $(x,\omega)$, with $x$ and $\omega$ being referred to respectively as the radial and angular variables on $M$. We endow $M$ with a product Riemannian metric
\begin{equation} \label{g0}
g_{0}=dx^2+g_{K}\,,
\end{equation}
with $x \in [0,1]$ and $g_K$ being a Riemannian metric on $K$ and we introduce a conformal rescaling $g$ of $g_0$ given by
\begin{equation}\label{confPain}
g=\alpha^4(x,\omega)g_{0}\,.
\end{equation}

It is a classical result that the boundary value problem 
\begin{equation}\label{bvp}
  \left\{ \begin{split}
      -\Delta_{g}u=0  & \quad \text{on $M$}\\
      u=f & \quad \text {on $\partial M$}
       \end{split}  \right.     
\end{equation}
has a unique solution $u\in H^1(M)$ given boundary data $f \in H^{1/2}(M)$. When $f$ is smooth, the DN map associated to the boundary value problem \eqref{bvp} is then defined by
\begin{equation}\label{DNmapstrong}
	\Dn f = \partial_{\nu}u  \ |_{\partial M} \  , 
\end{equation}
where $\nu = (\nu^i)$ denotes the unit outer normal to the boundary $\partial M$. In the general case in which $f \in H^{1/2}(M)$, the DN map is defined in a weak sense as a map from $H^{1/2}(\partial M)$ to $H^{-1/2}(\partial M)$ by
\begin{equation}\label{weakDN}
	\left \langle \Dn f | h \right \rangle = \int_M g(\nabla u, \nabla v) \ dvol_g \, , \quad  \forall f,  h \in H^{1/2}(\partial M),
\end{equation}
where $u$ is the unique solution of \eqref{bvp}, $v$ is any element of $H^1(M)$ s.t. $v_{|\partial \Omega} = h$, and
$\left\langle \cdot  | \cdot  \right \rangle$ is the standard $L^2$ duality pairing between $ H^{1/2}(\partial M)$ and $ H^{-1/2}(\partial M)$. One shows that the DN map $\Dn$ is an elliptic pseudo-differential operator $\Dn$ of order $1$ that is selfadjoint on $L^2(\partial M)$. The DN map associated to the metric $g_0$ is defined similarly and will be denoted by $\Dno$. 

An observation that will play a key role in our analysis is that the boundary value problem \eqref{bvp} is equivalent upon setting 
\begin{equation}\label{rescale}
	v=\alpha^{n-2}u\,,\quad \psi=\alpha^{n-2}|_{\partial M}f\,,
\end{equation}
to the boundary value problem for a corresponding Schr\"odinger operator on $(M,g_0)$, given by
\begin{equation}\label{bvps}
	\left\{ \begin{split}
		(-\Delta_{g_0}+q)v =0, & \quad \text{on $M$},\\
		v =\psi, & \quad \text{on $\partial M$},
	\end{split} \right.       
\end{equation}
where 
$$
    q=\frac{\Delta_{g_0}\alpha^{n-2}}{\alpha^{n-2}}\,.
$$
This follows from the well-known fact that the positive Laplace-Beltrami operators $-\Delta_{g_0}$ and $-\Delta_{g}$ are related by the identity :
\[
-\Delta_{g}=\alpha^{-(n+2)}(-\Delta_{g_0}+q)\alpha^{n-2}\,,\quad q=\frac{\Delta_{g_0}\alpha^{n-2}}{\alpha^{n-2}}\,.
\]
Associated to \eqref{bvps}, we can thus define similarly the DN map $\Lambda_{g_0,q}$ as the operator from $H^{\frac{1}{2}}(\partial M)$ onto $H^{-\frac{1}{2}}(\partial M)$ by 
$$
  \Lambda_{g_0,q} \psi = \partial_\nu v_{|\partial M}.
$$
We observe that the anisotropic Calder\'on problems for the DN map $\Lambda_g$ and $\Lambda_{g_0,q}$ are essentially equivalent, a fact that will be used crucially in this paper.

We shall be likewise interested in the partial DN maps corresponding to the cases in which the Dirichlet data $f$ are supported in only one of the connected components $\Gamma_0$ or $\Gamma_1$ of $\partial M$ and the normal derivative of the solution $u$ of \eqref{bvp} is also only measured on one of the boundary components. These will be denoted by $\Dngammaij: H^{1/2}(K) \to H^{-1/2}(K)$, defined for $f$ smooth with $\mathrm{supp} f \subseteq \Gamma_i$ by
\[
\Dngammaij f = \partial_{\nu}u  \ |_{ \Gamma_{j}} \,,
\]
with a definition in the weak sense similar to \eqref{weakDN} when $f \in H^{1/2}(K)$. Finally, we will also consider the DN maps $\Dngammaqij$ associated to the boundary value problem \eqref{bvps} for the corresponding Schr\"odinger operator. 

In this paper, we shall consider the stability problem for two metrics $g, \tilde{g}$ of the form (\ref{confPain}) under the hypothesis of equality of the angular metrics, that is  
$$
  g_{K}={\tilde{g}}_{K}\,. 
$$
Our task will be thus to obtain stability estimates on the conformal factor $\alpha$, assuming that for some $0<\epsilon<1$
\begin{equation}\label{DNMapHyp-global}
\lVert \Lambda_{g}-\Lambda_{\tilde{g}}\rVert_{\mathcal{B}(H^{1/2}(\partial M), H^{-1/2}(\partial M))}\ = \epsilon\,, 
\end{equation}
or one of the weaker conditions
\begin{equation} \label{DNMapHyp-partial} 
\lVert \Dngammai - \Dngammaitilde \rVert _{{\cal{B}}(H^{1/2}(\Gamma_i),H^{-1/2}(\Gamma_i))} = \epsilon\,,\quad i \in \{0,1\}\,.
\end{equation}
Precisely, under the assumption (\ref{DNMapHyp-global}) (resp. (\ref{DNMapHyp-partial})) corresponding to a global (resp. partial) Calder\'on problem, we aim to prove an estimate of the form
\begin{equation} \label{Aim}
  \| \alpha - \tilde{\alpha} \|_{L^\infty(M)} \leq C f(\epsilon), 
\end{equation}
for a function $f$ satisfying $\ds\lim_{\epsilon \to 0} f(\epsilon) = 0$.

In order to obtain (and quantify) such stability estimates, we impose the following conditions. 

\begin{hypo} \label{Hyp}

i) $g_K = \tilde{g}_K$. \\
ii) There exists a contant $c > 0$ such that  $\alpha, \tilde{\alpha} \geq c$ on $M$. \\
iii) There exist constants $0<r<1$ and $B > 0$ for which we have $\alpha, \tilde{\alpha} \in C^{2,r}(M)$ and $\|\alpha\|_{C^{2,r}}, \|\tilde{\alpha}\|_{C^{2,r}} \leq B$. \\
iv) There exist $\phi_1 = \Phi_1(x), \phi_2 = \Phi_2(\omega)$ (resp. $\tilde{\phi_1} = \tilde{\Phi_1}(x), \tilde{\phi_2} = \tilde{\Phi_2}(\omega)$) such that 
\begin{equation} \label{Separability}
  q = \frac{\Delta_{g_0} \alpha^{n-2}}{\alpha^{n-2}} = \phi_1(x) + \phi_2(\omega), \quad \tilde{q} = \frac{\Delta_{g_0} \tilde{\alpha}^{n-2}}{\tilde{\alpha}^{n-2}} = \tilde{\phi}_1(x) + \tilde{\phi}_2(\omega). 
\end{equation}
\end{hypo}

Let us make some remarks on the contents of Hypothesis \ref{Hyp}. The assumption $i)$ is made to simplify the analysis and obtain better stability estimates of logarithmic type (see below). In a companion paper \cite{DKN2026}, we study the general case of warped products manifolds whose warping function $\alpha = \alpha(x)$ and angular metric $g_K$ are both allowed to vary. The assumptions $ii)$ and $iii)$ are classical and reflect the fact that the Calder\'on problem is ill-posed. This means that the inverse of the map $g \mapsto \Lambda_g$ is not continuous unless the metric $g$ (here the conformal factor $\alpha$) lies in a compact set \cite{Ale1988}. Moreover the modulus of continuity is generally, at best, logarithmic \cite{Man2001, KRS2021}. The assumption $iv)$ is the most important and more restrictive one. Roughly speaking, (\ref{Separability}) means that the conformal factor $\alpha^4$ is compatible with the Painlev\'e type block separability of the metric \eqref{confPain}. As a consequence, the DN maps $\Lambda_g, \Lambda_{\tilde{g}}$ will be "diagonalizable" on well chosen Hilbert bases $(Y_k)_{k \geq 0}$ and $(\tilde{Y_k})_{k \geq 0}$, a fact that will be crucially used in our analysis. We refer to \cite{DKN5} for more informations on Painlev\'e block separability in full generality and to section \ref{DN} for the simple instance of block separability used in this paper. Finally, we will show in Appendix \ref{A} that we can  easily construct a large class of conformal factors $\alpha$ satisfying all the above assumptions.

\begin{rems} \label{ConstantA}
1. Under assumptions ii), iii) and iv), it is clear that there exists a constant $A = A(g_0, c, B)$ such that $\phi_1, \phi_2$ satisfy the assumptions :
\begin{equation} \label{A}
	\phi_1 \in C^{0,r}([0,1]), \ \phi_2 \in C^{0,r}(K), \quad \|\phi_1\|_\infty, \ \|\phi_2\|_\infty \leq A. 
\end{equation}	  
2. The functions $\phi_1, \phi_2$ aren't unique in the block separability property \eqref{Separability} since for any constant $c$, we could replace $\phi_1$ by $\phi_1 + c$ and $\phi_2$ by $\phi_2 - c$. Using this gauge, we will assume from the very beginning that 
\begin{equation} \label{Phi_1(1)}
	\phi_1(j) = \tilde{\phi}_1(j),
\end{equation}	
for $j=0$ or $j=1$ This assumption will simplify slightly the forthcoming analysis. 
\end{rems}

Motivated by the above hypotheses, we define the class of Painlev\'e-Liouville Riemannian manifolds as follows:
\begin{defi}
We say that $(M,g)$ is a Painlev\'e-Liouville Riemannian manifold if 
$$
M=[0,1]\times K\,,\quad g=\alpha^4(x,\omega)g_{0}\,,\quad g_{0}=dx^2+g_{K}\,,
$$
where we have $\alpha>c$ for some positive constant $c$ and where  
\begin{equation}\label{pdealpha}
\frac{\Delta_{g_0} \alpha^{n-2}}{\alpha^{n-2}} = \phi_1(x) + \phi_2(\omega)\,.
\end{equation}
\end{defi}
In Appendix \ref{A}, we address the important question of the existence of a solution $\alpha>0$ of the elliptic equation \eqref{pdealpha}, a question that is crucially relevant to the actual existence of Painlev\'e-Liouville Riemannian manifolds. 
 
Our main stability result for the corresponding \emph{global} Calder\'on problem can now be stated as follows. 

\begin{thm} \label{Main1}
Let $(M,g)$ and $(M,\tilde{g})$ be two Painlev\'e-Liouville Riemannian manifolds as in \eqref{confPain} satisfying the hypotheses \ref{Hyp}. Assume that \eqref{DNMapHyp-global} holds. Then \\
i) [\textbf{H\"older stability at the boundary $\partial M$}] There exists a constant $C = C(g_0,c,B) > 0$ such that for all $j=0,1$, $k=0,1,2$, there exists $0<\theta_k\leq1$ :
\begin{equation} \label{Angular}
	\| \alpha(j,.) - \tilde{\alpha}(j,.) \|_{C_E^k(\Gamma_j)} \leq C \epsilon^{\theta_k},
\end{equation}	
where the notation $C^k_E(\Gamma_j)$ means that the tangential and normal derivatives at $\Gamma_j$ are included in the definition of the $C^k$ norm. \\

\noindent ii) [\textbf{Logarithmic stability within $M$}] There exists a constant $C = C(g_0,c,B)$ and $0<\theta<1$ such that :
\begin{equation} \label{Radial}
	\| \alpha - \tilde{\alpha} \|_{C^{0,r}(M)} \leq C \left( \ln \frac{1}{\epsilon} \right)^{-\theta}. 
\end{equation}	 
\end{thm}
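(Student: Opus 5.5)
Part i) rests on boundary determination. The DN map $\Lambda_g$ is an elliptic pseudodifferential operator of order $1$, and its full symbol in boundary normal coordinates determines, through explicit formulas, the boundary jets of $g$. Since $g_K=\tilde g_K$, these jets reduce to the jets of $\alpha$ at $\Gamma_j$. I will follow the Alessandrini / Kang--Yun quantitative approach. One tests $\Lambda_g-\Lambda_{\tilde g}$ against highly oscillating boundary data $f_N=\chi(\omega)e^{iN\xi\cdot\omega}$ localized near a point of $\Gamma_j$, and uses the leading order asymptotics
\[
N^{-1}\langle \Lambda_g f_N, \overline{f_N}\rangle = \int \alpha^{2(n-2)}|\xi|_{g_K}\chi^2 + O(N^{-r}).
\]
Comparing the two metrics gives $|\alpha(j,\cdot)-\tilde\alpha(j,\cdot)|\le C(N\epsilon+N^{-r})$, and optimizing in $N$ yields a Hölder bound. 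The normal derivatives ($k=1,2$) are obtained in the same way from the next terms of the asymptotic expansion, with worse exponents $\theta_k$. Here I use the a priori $C^{2,r}$ bound $B$ to control remainders, and interpolation (Hölder bounds on a quantity plus a $C^{2,r}$ bound) to pass to tangential derivatives in $C^k_E$.

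For part ii) I will work with the Schrödinger gauge \eqref{bvps}. Since $\Lambda_{g_0,q}$ and $\Lambda_g$ are related by conjugation with $\alpha^{n-2}|_{\partial M}$ and by the normal derivative of $\alpha$ at the boundary, part i) gives
\[
\|\Lambda_{g_0,q}-\Lambda_{g_0,\tilde q}\|\le C\epsilon^{\theta'}.
\]
By the separability \eqref{Separability}, the operator $-\Delta_{g_0}+q$ separates. Let $Y_k$ be the eigenfunctions of $-\Delta_{g_K}+\phi_2$ with eigenvalues $\mu_k$, and similarly $\tilde Y_k,\tilde\mu_k$ for $\tilde\phi_2$. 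The DN map is then block diagonal, acting on each $Y_k$ through the $2\times2$ matrix of Weyl--Titchmarsh type data of the radial ODE
\[
-u''+\phi_1(x)u=-\mu_k u .
\]
Here is the main obstacle: the two bases differ, since $\phi_2\neq\tilde\phi_2$ a priori. First, the boundary values of $q$ from part i) give $\phi_2-\tilde\phi_2$ on $K$ Hölder close to a constant, which the gauge \eqref{Phi_1(1)} fixes. Perturbation theory then gives
\[
|\mu_k-\tilde\mu_k|\le C\epsilon^{\theta''}.
\]
Next, the diagonal entries of the DN map are the Weyl functions $M(-\mu_k)$ and $\tilde M(-\tilde\mu_k)$. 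These become close on the discrete set $\{\mu_k\}$ up to an error $C\epsilon^{\theta''}$, with polynomial growth in $k$ that is tamed by Weyl's law.

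I then recover $\phi_1$ stably from the values of the Weyl function on the sequence $\mu_k\to\infty$. I will write $M(z)-\tilde M(z)$ as a Laplace-type transform of the difference of the A-amplitudes (Simon's representation). Its values at $z=-\mu_k$ give a Hausdorff/Müntz moment problem, with $\sum 1/\mu_k^{1/2}=\infty$ by Weyl's law. Quantitative Müntz--Szász or analytic-continuation estimates, using the a priori bound $\|\phi_1\|_{C^{0,r}}\le A$ and a cutoff at $N$ moments, convert the $\epsilon$-closeness into
\[
\|\phi_1-\tilde\phi_1\|_{L^\infty}\le C(\ln 1/\epsilon)^{-\theta}.
\]
I expect this moment-problem step to be the genuinely lossy one, producing the logarithm. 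Together with the angular estimate, this gives $\|q-\tilde q\|_\infty\le C(\ln1/\epsilon)^{-\theta}$.

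Finally, $w=\alpha^{n-2}-\tilde\alpha^{n-2}$ solves
\[
(-\Delta_{g_0}+q)w=(\tilde q-q)\tilde\alpha^{n-2},
\]
with Dirichlet data bounded by $C\epsilon^{\theta_0}$ from part i). The Dirichlet problem is well posed because $0$ lies off the Dirichlet spectrum, as seen from the positive solution $\alpha^{n-2}$ of the equation. Elliptic estimates then bound $\|w\|_{C^{0,1}}$, and hence $\alpha-\tilde\alpha$, by the log bound. Interpolation with the $C^{2,r}$ bound $B$ gives the $C^{0,r}$ norm at the cost of decreasing $\theta$.
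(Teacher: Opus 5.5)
Your outline agrees with the paper up to $|\mu_k^2-\tilde\mu_k^2|\le C\epsilon^{\theta}$ (I write the angular eigenvalues as $\mu_k^2$), and part i) is the Kang--Yun argument the paper cites. The gap is where you recover $\phi_1$ from the diagonal entries $M$ alone through Simon's $A$-amplitude.

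On $[0,1]$ with the Dirichlet condition at $x=1$, that representation is not an exact Laplace transform over a compact window. It reads $-M(-\kappa^2)=\kappa+\int_0^1A(\alpha)e^{-2\alpha\kappa}\,d\alpha+E(\kappa)$. The remainder $E$ collects the reflections off $x=1$ and depends on the whole potential. Already for $\phi_1=0$ one has $-M(-\kappa^2)=\kappa\coth\kappa=\kappa+2\kappa\sum_{j\ge1}e^{-2j\kappa}$. For two potentials, $E-\tilde E$ is exponentially small in $\mu_k$ but not small in $\epsilon$. So at the low indices, which are exactly the moments a quantitative M\"untz estimate needs, you have no data of size $\epsilon^{\theta}$. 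You cannot discard them either: keeping only $\mu_k\gtrsim\ln(1/\epsilon)$ weights the unknown by $e^{-2\mu_k\alpha}$ and loses all control away from $\alpha=0$.

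Making the representation exact, as the paper does for one boundary component, does not cure this. Take the $\Gamma_0$ analogue of \eqref{c1}--\eqref{c3}: multiply $M-\tilde M$ by $\Delta\tilde\Delta$ and expand $s_1\tilde s_1\sim(\cosh(2\mu(1-x))-1)/(2\mu^2)$. The constant part produces an extra term proportional to $e^{-2\mu_k}\int_0^1(\tilde\phi_1-\phi_1)\,dx$. This is a point mass at $t=1$ in the Laplace variable, which the $L^2$ moment-problem estimate does not control. That is precisely why Theorem \ref{Main2}, which uses one diagonal block, needs the extra hypothesis \eqref{MoyennePhi}. Your argument uses nothing beyond one diagonal block, so as written it can at best reach that partial-data statement, not Theorem \ref{Main1} ii).

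The missing idea is to use the whole $2\times 2$ block, in particular the off-diagonal entries $-1/\Delta(-\mu_k^2)$ (the $\Gamma_0\to\Gamma_1$ data).

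\begin{enumerate}
\item \textbf{Comparing the entries.} The paper compares the off-diagonal entries by min-max (Lemma \ref{Lemme2}, valid for $k\ge k_0$ because $\mu\mapsto 1/\Delta(-\mu^2)$ is eventually monotone), and compares $N$ as in Lemma \ref{Lemme1}. Even this diagonal comparison, which you only sketch, needs two ingredients you do not supply. One is the monotonicity of $\mu\mapsto -N(-\mu^2)$, to match the $k$-th eigenvalues of two operators diagonalized in different bases. The other is the asymptotics $-N(-\mu^2)=\mu+O(1/\mu)$ for large $k$.
\item \textbf{The identity.} The paper then integrates $W(c_0,\tilde s_0)'$ over $[0,1]$. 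Its boundary values involve $\tilde N-N$ and $1/\Delta-1/\tilde\Delta$, with $M\Delta\tilde\Delta$ as a coefficient. Because $c_0\tilde s_0\sim\sinh(2\mu x)/(2\mu)$ has no constant term, the Marchenko transformation operators turn the left side, after multiplication by $2\mu_k e^{-2\mu_k}$, into an exact Laplace transform on $[0,2]$ of an $L^2$ function built from $R[\tilde\phi_1-\phi_1]$. The right-hand side is $O(\epsilon^{\theta})$ for all $k\ge k_0$.
\item \textbf{Conclusion.} The M\"untz estimate and $\|R^{-1}\|\le C_A$ then give the logarithmic bound on $\phi_1-\tilde\phi_1$.
\end{enumerate}

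Your last step is a valid alternative to the paper's divergence-form equation for $\ln\alpha-\ln\tilde\alpha$ followed by interpolation and Morrey's embedding. You solve for $w=\alpha^{n-2}-\tilde\alpha^{n-2}$, whose Dirichlet problem is coercive thanks to the positive solution $\alpha^{n-2}$. You then apply $W^{2,p}$ estimates after interpolating the $L^2$ bound on $\phi_1-\tilde\phi_1$ with its a priori $C^{0,r}$ bound.
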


We note that the first H\"older stability result at the boundary is an easy consequence of the classical boundary determination results obtained (in the convenient form for us) by Kang and Yun in \cite{KY2002}. The second logarithmic stability result within the manifold $M$ is the new part in this theorem. 

We also obtain stability results for the \emph{partial} Calder\'on problem where the measurements are made either on $\Gamma_0$ or $\Gamma_1$. Roughly speaking, assuming that the measures are made on $\Gamma_1$, we obtain logarithmic stability estimates on any regions that do not contain a neighbourhood of the inaccessible part $\Gamma_0$. Precisely, we prove : 

\begin{thm} \label{Main2}
	Let $(M,g)$ and $(M,\tilde{g})$ be two metrics as in \eqref{confPain} satisfying the hypotheses \ref{Hyp}. Assume that \eqref{DNMapHyp-partial} holds at $\Gamma_j, \ j=0$ or $1$ and additionally that there exist constants $D>0$ and $0<\nu<1$ with 
	\begin{equation} \label{MoyennePhi}
	  \left| \int_0^1(\phi_1(x) - \tilde{\phi}_1(x)) dx \right| \leq D \epsilon^\nu.
	\end{equation}
	Then \\
	1. [\textbf{Hölder stability at the boundary $\Gamma_j$}] There exists a constant $C = C(g_0,c,B,D) > 0$ such that for all $k=0,1,2$, there exists $0<\theta_k\leq1$ :
	\begin{equation} \label{Angular-partial}
		\| \alpha(j,.) - \tilde{\alpha}(j,.) \|_{C_E^k(\Gamma_j)} \leq C \epsilon^{\theta_k},
	\end{equation}	
	where the notation $C^k_E(\Gamma_j)$ means that the tangential and normal derivatives at $\Gamma_j$ are included in the definition of the $C^k$ norm. \\
	
	\noindent 2. [\textbf{Logarithmic stability within $M$}] Assume that the measures are made on $\Gamma_1$. For any $0<\tau<1$, define $M_\tau = [\tau,1] \times K$. Then there exists constants $C_\tau = C(g_0,c,B,D,\tau)$ and $0<\theta_\tau<1$ such that :
	\begin{equation} \label{Radial_partial}
		\| \alpha - \tilde{\alpha} \|_{C^{0,r}(M_\tau)} \leq C_\tau \left( \ln \frac{1}{\epsilon} \right)^{-\theta_\tau}, 
	\end{equation}	 
    where $C_\tau \lesssim \tau^{-2}$ and
    \[
    \theta_\tau
    =
    \frac{2(1-r)}{n}\,
    \frac{\theta\,\tau}{2-\tau}.
    \]
    Symmetric results hold when the measures are made on $\Gamma_0$. 
\end{thm}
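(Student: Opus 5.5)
We would follow the same scheme as for Theorem~\ref{Main1}, keeping track of the parts of the argument that required access to both boundary components. \textbf{Step 1 (boundary determination).} For part 1, we use the boundary determination results of Kang and Yun \cite{KY2002}. These are local: the full symbol of $\Dngammai$ determines $\alpha$ and its normal derivatives on $\Gamma_i$ in a H\"older-stable way. Hence \eqref{Angular-partial} follows exactly as \eqref{Angular}, with the constants depending only on $g_0,c,B$.

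\textbf{Step 2 (reduction to one-dimensional problems).} Using the rescaling \eqref{rescale}, we pass from $\Dngammai$ to the Schr\"odinger partial DN map $\Dngammaqone$. The boundary traces of $\alpha^{n-2}$ and its normal derivative on $\Gamma_1$ are controlled by Step 1, so the operator-norm difference of these maps is still $O(\epsilon^{\theta'})$. By \eqref{Separability}, $-\Delta_{g_0}+q = -\partial_x^2+\phi_1 + (-\Delta_{g_K}+\phi_2)$, and $\Dngammaqone$ is diagonal on the eigenbasis $(Y_k)$ of $-\Delta_{g_K}+\phi_2$, with eigenvalues the Weyl--Titchmarsh type functions $M(\mu_k)$ of the radial operator $-\partial_x^2+\phi_1$ at spectral parameter $-\mu_k$.

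We first compare angular data. Since $\alpha$ and $\tilde{\alpha}$ are close on $\Gamma_1$ together with their derivatives, and \eqref{Phi_1(1)} fixes the gauge at $x=1$, restricting \eqref{Separability} to $\Gamma_1$ gives $\|\phi_2-\tilde\phi_2\|_\infty \lesssim \epsilon^{\theta''}$. Hence the eigenvalues $\mu_k,\tilde\mu_k$ and the spectral projectors are H\"older close. Testing the DN difference on $Y_k$ then yields $|M(\mu_k)-\tilde M(\tilde\mu_k)|\lesssim \epsilon^{\theta'''}$ uniformly in $k$.

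\textbf{Step 3 (moment problem and the main obstacle).} Here we use the one-sided Weyl--Titchmarsh function and the large-$\mu$ asymptotics to turn the difference into a Laplace-type moment problem. We expect
\[
M(\mu)-\tilde M(\mu) = \int_0^1 (\phi_1-\tilde\phi_1)(x)\, e^{-2\sqrt{\mu}(1-x)}\,dx\,(1+o(1)),
\]
or an exact Borg--Marchenko/Simon $A$-amplitude representation. Without data on $\Gamma_0$, the constant mode is not controlled, which is why the hypothesis \eqref{MoyennePhi} is needed to pin the mean of $\phi_1-\tilde\phi_1$.

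The main obstacle is the quantitative inversion of this Hausdorff/M\"untz moment problem from the samples $\sqrt{\mu_k}\sim k^{1/(n-1)}$ by Weyl's law. For this we would use a Carleman-type or analytic-continuation (two-constants/Hadamard three-lines) estimate for the function $z\mapsto\int (\phi_1-\tilde\phi_1)e^{-2z(1-x)}dx$. The weight $e^{-2\sqrt\mu(1-x)}$ decays fastest near $x=0$. A harmonic-measure estimate on a strip then yields log-stability on $[\tau,1]$ with exponent proportional to $\tau/(2-\tau)$, and a constant blowing up like $\tau^{-2}$. Combining this with \eqref{MoyennePhi} should give $\|\phi_1-\tilde\phi_1\|_{H^{-1}(\tau,1)}\lesssim (\ln\frac1\epsilon)^{-\theta\tau/(2-\tau)}$.

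\textbf{Step 4 (back to $\alpha$).} Finally, $w=\alpha^{n-2}-\tilde\alpha^{n-2}$ solves $(-\Delta_{g_0}+q)w=-(q-\tilde q)\tilde\alpha^{n-2}$ on $M_\tau$, with Cauchy data on $\Gamma_1$ controlled by Step 1. Because $\Gamma_1$ is a full component, a stable unique continuation/Cauchy estimate in the product geometry lets us propagate to $M_\tau$ in a weak norm. Interpolating with the $C^{2,r}$ bound $B$ between $H^{-1}$ and $C^{2,r}$ (Gagliardo--Nirenberg with Sobolev embedding in dimension $n$) then produces the factor $2(1-r)/n$ in $\theta_\tau$. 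The lower bound $\alpha\geq c$ converts the estimate on $\alpha^{n-2}$ into the $C^{0,r}$ estimate \eqref{Radial_partial}. The case of measurements on $\Gamma_0$ is symmetric under $x\mapsto 1-x$.
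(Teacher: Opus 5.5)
Your Step~1, and the reduction and angular comparison in Step~2, agree with the paper. Part~2 of the statement, however, has a genuine gap. You place the $\tau$-localization and the loss $\tau/(2-\tau)$ in the moment problem (Step~3). You then treat the passage from $\Gamma_1$ to $M_\tau$ in Step~4 as a ``stable'' unique continuation.

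That passage is a Cauchy problem for an elliptic equation, which is ill-posed. A Carleman or three-balls argument only gives conditional H\"older-type stability on $M_\tau$, with a $\tau$-dependent exponent. In the paper this is exactly where $\theta_\tau$ and $C_\tau\lesssim\tau^{-2}$ come from. Theorem~\ref{thm:bourgeois-cylinder} (limiting weight $\varphi=x$, cutoff with $\operatorname{supp}\chi'\subset[\tau/4,\tau/2]$) gives $\|u\|_{H^1(M_\tau)}\le C_\tau\big(e^{(1-\tau)/h}\delta+e^{-\tau/(2h)}\big)$, where $\delta$ bounds $\|Pu\|_{L^2(M)}$ together with the Cauchy data on $\Gamma_1$. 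Choosing $h$ to balance the two terms yields $\delta^{\tau/(2-\tau)}$, i.e.\ $\kappa=\theta\tau/(2-\tau)$ when $\delta\sim(\ln\frac{1}{\epsilon})^{-\theta}$. The constant $C_\tau\lesssim\tau^{-2}$ comes from the commutator $[P,\chi]$.

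If you have already lost $\tau/(2-\tau)$ in Step~3, the two losses compound. You end up with an exponent of order $\tau^2$ as $\tau\to0$, not the stated $\theta_\tau$. Your Steps~3 and~4 also do not fit together. The Carleman step needs the source in $L^2$ on the whole support of the cutoff, which extends down to $x=\tau/4$, whereas you only control $\phi_1-\tilde\phi_1$ in $H^{-1}(\tau,1)$. Finally, the harmonic-measure bound with exponent $\tau/(2-\tau)$ and constant $\tau^{-2}$ is asserted, not derived.

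The missing idea is that no localization is needed at the level of $\phi_1$. The eigenvalues $-N(-\mu_k^2)$ of $\Dngammaqone$ (it is $N$, not $M$, for data on $\Gamma_1$) involve the solution $s_0$ vanishing at $x=0$, so they see $\phi_1$ on all of $[0,1]$. The paper uses the exact identity \eqref{c1} and Gendron's representation \eqref{c3} of $\int_0^1(\tilde\phi_1-\phi_1)s_0\tilde s_0\,dx$ as $\mu_k^{-2}\int_0^1\cosh(2\mu_kx)D[\tilde\phi_1-\phi_1]\,dx-\mu_k^{-2}\int_0^1(\tilde\phi_1-\phi_1)\,dx$, with $D$ boundedly invertible. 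After multiplication by $2\mu_k^2e^{-2\mu_k}$, the mean term has size $e^{-2\mu_k}\bigl|\int_0^1(\tilde\phi_1-\phi_1)\,dx\bigr|$, which is not small for the first $k$'s. This is the precise role of \eqref{MoyennePhi}.

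The resulting Laplace-type moment problem is solved as in Section~\ref{MomentPb}. It gives the \emph{global} bound $\|\phi_1-\tilde\phi_1\|_{L^2(0,1)}\lesssim(\ln\frac{1}{\epsilon})^{-\theta}$, hence $\|q-\tilde q\|_{L^2(M)}\lesssim(\ln\frac{1}{\epsilon})^{-\theta}$. Only then is the Carleman estimate applied, to $v=(n-2)\ln(\alpha/\tilde\alpha)$ from Lemma~\ref{Lemmep7}; your equation for $\alpha^{n-2}-\tilde\alpha^{n-2}$ could serve there instead.

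The factor $2(1-r)/n$ is $2/p$ with $p=n/(1-r)$. It comes from $\|f\|_{L^p}\le\|f\|_\infty^{1-2/p}\|f\|_{L^2}^{2/p}$ applied to $\nabla(\ln\alpha-\ln\tilde\alpha)$, followed by Morrey's embedding $W^{1,p}\hookrightarrow C^{0,r}$. An $H^{-1}$--$C^{2,r}$ interpolation would give a different exponent.

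A smaller issue is in Step~2: testing on $Y_k$ does not give bounds uniform in $k$. The DN difference is only small from $H^{1/2}$ to $H^{-1/2}$, so the error is of order $\epsilon\mu_k$. Also, individual eigenprojectors need not be close for degenerate angular spectra such as the round sphere. The paper instead gets $O(\epsilon^{1/2})$ by combining min-max and the monotonicity of $\mu\mapsto-N(-\mu^2)$ for $\mu_k\le\epsilon^{-1/2}$ with the asymptotics $-N(-\mu^2)=\mu+O(1/\mu)$ beyond.
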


\begin{rems}
1. The assumption \eqref{MoyennePhi} can be re-expressed as a condition on the scalar curvatures $Scal_g, Scal_{\tilde{g}}$. More precisely, it is well known that 
$$
  q = \frac{n-2}{4(n-1)} \left( \alpha^4 Scal_g - Scal_{g_0} \right). 
$$
From this, \eqref{MoyennePhi} can be shown to be equivalent to the condition :
\begin{equation} \label{MoyenneCurvature}
  \left| \int_0^1 [\alpha^4 Scal_g - \tilde{\alpha}^4 Scal_{\tilde{g}}] dx \right| \leq D \epsilon^\nu.	
\end{equation}	
Moreover, we emphasize the fact that the additional assumption \eqref{MoyennePhi} in the previous partial stability theorem is a consequence of the assumption \eqref{DNMapHyp-global} used in the global stability theorem. \\

\noindent 2. In the estimate \eqref{Radial_partial}, we can allow $\tau$ to depend on $\epsilon$ in such a way that $\tau(\epsilon) \to 0$. We extend this way (and in the limit $\e \to 0$) our stability estimates to bigger and bigger regions $M_{\tau(\e)}$ that still all avoid the inaccessible part $\Gamma_0$ at the price of weakening the stability rate. For instance, we get with the choice
\[
  \tau(\varepsilon) = \frac{1}{\sqrt{\ln \, \ln(1/\varepsilon)}}.
  \]
Then
\[
	\| \alpha - \tilde{\alpha}\|_{C^{0,r}(M_{\tau(\varepsilon)})}
	\;\lesssim\;
	\ln \, \ln(1/\varepsilon)\,
	\exp\,\Bigl(
	-\frac{(1-r)\theta}{n}\,
	\sqrt{\ln \, \ln(1/\varepsilon)}
	\Bigr), \quad \varepsilon \to 0.
\]
Thus we obtain a stability rate which is stronger than the global log--log estimates available for Calder\'on-type inverse problems with partial data (see, e.g., Caro--Dos Santos Ferreira--Ruiz~\cite{CDR2016}).
\end{rems}

Let us now compare our results with the existing litterature. First, assume that $(K,g_K) = (\mathbb{S}^{n-1}, d\omega^2)$ is the $(n-1)$-dimensional sphere equipped with the round metric. Setting $\beta(x,\omega) = \alpha(x,\omega) e^{\frac{x}{2}}$ and introducing the new coordinate $r = e^{-x}$, the Riemannian manifold $(M,g)$ can be expressed as : 
$$
  M = [e^{-1}, 1]\times \mathbb{S}^{n-1}\,, \quad g = \beta^4(r,\omega) [dr^2 + r^2 d\omega^2],
$$
and can thus be viewed as a conformal deformation of a $n$-dimensional annulus equipped with the euclidean metric. For such models that correspond to the classical Calder\'on problem for isotropic conductivities, Alessandrini was the first to obtain logarithmic stability estimates in \cite{Ale1988}. This result was later improved in several directions for instance by Novikov \cite{Nov2011} for the stability of the Gelfand-Calder\'on problem (\textit{i.e.} the Calder\'on problem for a Schrödinger equation), by Alessandrini and Gaburro who obtained stability estimates for \emph{anisotropic} conductivities having a certain form (but still depending on a scalar function) in \cite{AlGa2001} and studied the corresponding partial Calderon problem in \cite{AlGa2009} and finally, by Heck and Wang, and Caro, Dos Santos Ferreira and Ruiz, who studied the stability of the \emph{partial} Calder\'on problem for isotropic conductivities in \cite{HeWa2016} and \cite{CDR2016} respectively. In the three latter references, only log log-type stability estimates were obtained for the partial Calder\'on problem, except for situations in which the inaccessible part of the boundary is either a plane or part of a sphere. In that case, log-type stability estimates exist. Our logarithmic stability results extend thus these classical results to the class of Painlev\'e-Liouville Riemannian manifolds  for both the global and partial Calder\'on problems where the conductivity $\alpha$ satisfies \eqref{Separability}. Note however that in the latter case, we only obtain logarithmic stability estimates on Painlevé-Liouville Riemannian manifolds from which we remove an arbitrary small neighbourhood of the inacessible part of the boundary. 

Second, closer to our models are the so-called admissible manifolds introduced in \cite{DKSU2009, DKLS2016} which are Riemannian manifolds $(M,g)$ such that 
\begin{equation} \label{CTA}
  M \subset \subset \R \times L, \quad g = c(x,\omega) [dx^2 + g_L], 
\end{equation}
where $c>0$ is a positive function with no restriction and the transversal manifold $(L,g_L)$ is a $(n-1)$-dimensional compact Riemannian manifold with boundary such that the geodesic ray transform is injective \footnote{Examples of such Riemannian manifolds are : (a) simple manifolds of any dimension \cite{Sha1994}, (b) negatively curved manifolds with strictly convex boundary \cite{Gui2017}, (c) manifolds of dimension $\geq 3$ that have strictly convex boundary and are globally foliated by strictly convex hypersurfaces \cite{UhVa2016, PSUZ2019}.}. As shown in \cite{DKSU2009}, models satisfying \eqref{CTA} - which are called Conformally Transversally Anisotropic anifolds (\textbf{CTA}) in \cite{DKLS2016} - are the most general Riemannian manifolds for which Complex Geometrical Optics (CGO) solutions can be contructed - through the existence of limiting Carleman weights - and used to study uniqueness and stability in the anisotropic Calder\'on problem. Relevant for this paper are the log log-type stability estimates for the conformal factor $c$ that have been obtained by Caro and Salo in \cite{CaSa2014}. 

Our model differ from admissible manifolds with respect to three important points : 
\begin{itemize}
\item Our transversal manifolds $(K,g_K)$ are closed manifolds with no restriction on the injectivity of the geodesic ray transform. In particular, the transversal manifold could be the round sphere which is the classical counterexample to injectivity of the geodesic ray transform. In consequence, the classical CGO techniques that amount ultimately to inverting a geodesic ray transform cannot be used in our setting. However, our Painlevé-Liouville manifolds are CTA manifolds satisfying \eqref{CTA} and thus limiting Carleman weights exist on them. We will use these classical Carleman estimates in the version given in  \cite{KS2013} to prove our stability estimates in the \emph{partial} Calder\'on problem. 

\item However, our conformal factors $\alpha$ satisfy the separability condition \eqref{Separability} and so, are much more restricted than the conformal factors $c$ allowed in admissible geometries. They depend essentially on the two (almost) arbitrary functions $\phi_1(x)$ and $\phi_2(\omega)$. This will be shown rigorously in Appendix \ref{A}. 

\item The boundaries of our manifolds $(M,g)$ have two connected components that are compatible with separation of variables in opposition to the general connected boundaries used in admissible geometries. The resulting possibility to diagonalize the DN map on a well chosen Hilbert basis of eigenfunctions  is at the heart of our analysis. We refer to Section \ref{DN} for this point. 
\end{itemize}

Despite these differences between admissible and Painlev\'e-Liouville manifolds which make difficult the comparison, we observe that our results improve the log log-type stability estimate obtained in \cite{CaSa2014} to log-type stability estimates in cases where the CGO techniques cannot be applied.  

Let us finish this introduction with the outline of this paper. First, in Section \ref{DN}, we use the particular geometry of Painlev\'e-Liouville manifolds (that are conformal to a product metric) in order to express the DN map $\Lambda_g$ in terms of the DN map $\Lambda_{g_0, q}$. Thanks to the separability condition \eqref{Separability}, the latter can be diagonalized onto a Hilbert basis $(Y_k)_{k\geq0}$ corresponding to a choice of normalized eigenfunctions of the Schr\"odinger operator $-\Delta_{g_K} + \phi_2$ where $-\Delta_{g_K}$ is the Laplace-Beltrami operator for the transversal manifolds $(K,g_K)$. We are thus led to study $1$-dimensional DN maps associated to the Schr\"odinger equations 
\begin{equation} \label{Schro}
  -v'' + \phi_1(x) v = -\mu_k^2 v, \quad x \in [0,1],
\end{equation}  
parametrized by the eigenvalues $(\mu^2_k)_{k \geq 0}$ of $-\Delta_{g_K} + \phi_2$. More precisely, using the fact that the boundary can be identified with two copies of $K$, we prove that the reduced DN map onto each subspace spanned by $(Y_k)$ is a multiplication operator by a $2\times2$ matrix given by
$$
  (\Lambda_{g_0,q})_{|\langle Y_k \rangle} = \begin{pmatrix}
  -M(-\mu_{k}^{2}) & -\frac{1}{\Delta(-\mu_{k}^{2})}\\
  -\frac{1}{\Delta(-\mu_{k}^{2})} & -N(-\mu_{k}^{2})
  \end{pmatrix},
$$
where $M,N$ and $\Delta$ are the Weyl-Titchmarsh and characteristic functions associated to \eqref{Schro} respectively. This will allow us to use powerful tools from $1$-dimensional inverse spectral theory. Second, in Section \ref{BD}, we use the classical boundary determination results from \cite{KY2002} to prove the Hölder stability at the boundary stated in Theorems \ref{Main1} and \ref{Main2}. From this result and the separability condition \eqref{Separability}, we can then easily deduce that there exists $0<\theta<1$ such that :
$$
  \| \phi_2 - \tilde{\phi}_2 \|_\infty \leq C \epsilon^\theta, \quad \forall k \geq 0, \quad  |\mu_k^2 - \tilde{\mu}_k^2| \leq C \epsilon^\theta. 
$$
Third, in Section \ref{GlobalStability}, we first prove some preliminary results on the Weyl-Titchmarsh functions $M,N$ and the characteristic function $\Delta$. In particular, using our main hypotheses \eqref{DNMapHyp-global}, we are able to prove that 
\begin{equation} \label{MD}
  \forall k \geq 0, \quad |M(-\mu_k^2) - \tilde{M}(-\tilde{\mu}_k^2) | \leq C \epsilon^\theta, \quad |\Delta(-\mu_k^2) - \tilde{\Delta}(-\tilde{\mu}_k^2) | \leq C \epsilon^\theta
\end{equation}
Then, we start from the identity
\begin{multline*}
  \int_0^1 [\phi_1 - \tilde{\phi}_1 + \mu_k^2 - \tilde{\mu}_k^2] c_0(x,\mu_k) \tilde{s}_0(x,\tilde{\mu}_k) dx \\ =  M(-\mu_k^2) \Delta(-\mu_k^2) \tilde{\Delta}(-\tilde{\mu}_k^2) \left( \tilde{N}(-\tilde{\mu}_k^2) - N(\mu_k^2) \right) + \frac{\tilde{\Delta}(-\tilde{\mu}_k^2) - \Delta(-\mu_k^2)}{\Delta(-\mu_k^2)}, 
\end{multline*}
that comes essentially from the Schr\"odinger equation. Here, $c_0, s_0$ are solutions of \eqref{Schro} with $\cos$ and $\sin$ type boundary conditions at $0$. Next, using the classical transformation operators associated to \eqref{Schro} (see for instance \cite{Mar2011}) and the previous estimates \eqref{MD}, we can prove (following Gendron \cite{Gen2022} with additional technical work) that there exists a Lipschitz invertible operator $R : L^2(0,2) \longrightarrow L^2(0,2)$ such that 
\begin{equation} \label{Moment}
 \forall k \geq 0, \quad  \left|  \int_0^{2} e^{-2\mu_k x} R[\phi_1 - \tilde{\phi}_1](x) dx \right| \leq C \epsilon^\theta. 
\end{equation}
This is equivalent to a moment problem and we can use the same method as \cite{DKN7, DKN8, Gen2022} to prove that
$$
  \| \phi_1 - \tilde{\phi}_1 \|_2 \leq C \left( \ln(\frac{1}{\epsilon})\right)^{-\theta}. 
$$
Note that the well-known ill-posedness of classical moment problems is the origin for the loss of stability (from Hölder to logarithmic) within the manifold. As a conclusion, putting all the previous results together, we have proved at this stage that 
$$
  \| q - \tilde{q} \|_2 \leq C \left( \ln(\frac{1}{\epsilon})\right)^{-\theta}. 
$$
Finally, using the particular form of the potentials $q, \tilde{q}$, we can prove that $\ln \alpha - \ln \tilde{\alpha}$ satisfies an elliptic PDE whose inhomogenous term is given by $q - \tilde{q}$. Using a standard existence result for elliptic PDE and a trick given in \cite{CGR2013}, we can prove our main inner stability estimate
$$
  \| \alpha - \tilde{\alpha} \|_{L^\infty(M)} \leq C \left( \ln(\frac{1}{\epsilon})\right)^{-\theta}. 
$$  

In Section \ref{PartialStability}, we study the stability problem for the partial Calder\'on problem where the measurements are made only on $\Gamma_0$ or $\Gamma_1$. The strategy is the same as before, but we need to impose the extra condition \eqref{MoyennePhi} in order to obtain logarithmic stability estimates for $\|q - \tilde{q}\|_{L^2(M)}$. At last, the logarithmic  stability estimate for $\|\alpha - \tilde{\alpha}\|_{L^\infty(M_\tau)} \|$ is a consequence of the classical Carleman estimates that hold on general CTA manifolds. Note that the Dirichlet and Neumann data are measured on the same connected components $\Gamma_0$ or $\Gamma_1$ in this result. If the Dirichlet and Neumann data would be measured on different (and thus disjoint) connected components, then there is no uniqueness in the partial Calder\'on problem as shown in \cite{DKN2, DKN3, DKN4}. 

Finally, we finish this paper by Appendix \ref{A} containing a description of the class of conformal factors $\alpha$ satisfying Hypothesis \ref{Hyp} and Appendix \ref{B} in which we prove the Carleman estimates we use in the stability estimate for the partial Calder\'on problem.


\Section{The Dirichlet-to-Neumann map} \label{DN}
In this section, we exploit the separation of variables for the boundary value problems \eqref{bvp} and \eqref{bvps} to compute the DN map $\Lambda_{g}$. Our hypothesis that the conformal factor $\alpha^4(x,\omega)$ is compatible with the Painlev\'e-type block separability of the metric \eqref{confPain} is equivalent to the potential $q$ in \eqref{bvps} satisfying the block-separability condition
\[
q(x,\omega)=\phi_1(x)+\phi_2(\omega)\,,
\]
so that the Schr\"odinger equation in \eqref{bvps} reads
\begin{equation}\label{Sepop}
(-\partial_{x}^{2}+\phi_1(x)-\Delta_K+\phi_2(\omega))v=0\,,
\end{equation}
where $-\Delta_K$ denotes the positive Laplace-Beltrami operator on $(K,g_K)$. We denote the normalized eigenfunctions of the angular operator $-\Delta_K+\phi_2(\omega)$ by 
$Y_{k}(\omega)\,,\,k\geq 0\,,$ and the corresponding eigenvalues by $\mu_{k}^{2}$, so that 
\[
(-\Delta_K+\phi_2)Y_{k}=\mu_{k}^2 Y_{k}\,.
\]
The eigenfunctions $\big(Y_k(\omega)\big)_{k\geq 0}$ form a Hilbert basis of $L^2(K,dvol_{g_{K}})$ and we therefore seek $v$ in the form
\begin{equation}\label{Fourierv}
v(x,\omega)=\sum_{k\geq 0}v_{k}(x)Y_{k}{(\omega)}\,.
\end{equation}
Substituting \eqref{Fourierv} into \eqref{Sepop} and expanding the boundary data $\psi$ in \eqref{bvps} on the connected components $\{x=0\}$ and $\{x=1\}$ of $\partial M$ in the basis $\big(Y_k(\omega)\big)_{k\geq 0}$ as 
\[
\psi |_{x=0}(\omega)=:\psi_{0}(\omega)=\sum_{k\geq 0}\psi_{0,k}\,Y_{k}(\omega)\,,\quad \psi |_{x=1}(\omega)=:\psi_{1}(\omega)=\sum_{k\geq 0}\psi_{1,k}\,Y_{k}(\omega)\,,
\]
we obtain the sequence of radial boundary value problems given for $k\geq 0$ and $x\in [0,1]$ by
\begin{equation}\label{seqode}
\begin{cases}
-v_{k}{''}(x)+\phi_{1}(x)v_{k}(x)=-\mu_{k}^{2}v_{k}(x)\,,\\
v_{k}(0)=\psi_{0,k}\,,\quad v_{k}(1)=\psi_{1,k}\,.
\end{cases}
\end{equation}
We now consider fundamental systems of solutions $\{c_{0}(x,z),s_{0}(x,z)\}$ and $\{c_{1}(x,z),s_{1}(x,z)\}$ of the ODE
\begin{equation}\label{radialz}
-v{''}(x)+\phi_{1}(x)v(x)=z\,v(x)\,,\quad z\in \mathbb{C}\,,
\end{equation}
defined respectively by the Cauchy conditions 
\begin{equation}\label{Cauchy1}
c_{0}(0,z)=1\,,\,c'_{0}(0,z)=0\,,\quad s_{0}(0,z)=0\,,\,s'_{0}(0,z)=1\,,
\end{equation}
and 
\begin{equation}\label{Cauchy2}
c_{1}(1,z)=1\,,\,c'_{1}(1,z)=0\,,\quad s_{1}(1,z)=0\,,\,s'_{1}(1,z)=1\,.
\end{equation}
The characteristic function $\Delta(z)$ and the Weyl-Titchmarsh functions $M(z),N(z)$ corresponding to the o.d.e. \eqref{radialz} with Dirichlet boundary conditions $v(0)=0,v(1)=0$ are then defined in terms of the Wronskians of the elements of the fundamental sets by
\[
\Delta(z)=W(s_0,s_1)(z)\,,
\]
and 
\[
\,M(z)=-\frac{W(c_{0},s_{1})(z)}{\Delta(z)}=-\frac{c_{0}(1,z)}{s_{0}(1,z)}\,,\quad N(z)=-\frac{W(c_{1},s_{0})(z)}{\Delta(z)}=\frac{c_{1}(0,z)}{s_{1}(0,z)}\,.
\]
We expand in the basis $\big(Y_k(\omega)\big)_{k\geq 0}$ the DN map $\Lambda_{g_{0},q}$ for the boundary value problem \eqref{bvps} as
\begin{equation}\label{DNqFourier}
\Lambda_{g_{0},q}\begin{pmatrix}
            \psi_{0}\\
            \psi_{1}
         \end{pmatrix}
=\sum_{k\geq 0}\bigg(\Lambda^{k}_{g_{0},q}\begin{pmatrix}
            \psi_{0,k}\\
            \psi_{1,k}
         \end{pmatrix}
\bigg)Y_{k}\,,
\end{equation}
where 

  \begin{align}\label{DNqFourierCoeff}
    \Lambda^{k}_{g_{0},q} \begin{pmatrix}
            \psi_{0,k}\\
            \psi_{1,k}
         \end{pmatrix}
 &= \begin{pmatrix}
           -v_{k}'(0) \\
            v_{k}'(1)
         \end{pmatrix} \,.
  \end{align}
The coefficients \eqref{DNqFourierCoeff} may now be expressed in terms of the characteristic function $\Delta$ and Weyl-Titchmarsh functions $M,N$ evaluated at $-\mu_{k}^{2}$ (where the $\mu_{k}^{2}$ are the eigenvalues of the angular operator $-\Delta_K+\phi_2(\omega)$) by a straightforward calculation using \eqref{DNqFourier} and the Cauchy conditions \eqref{Cauchy1}, \eqref{Cauchy2}, giving 
  \begin{align}\label{DNqFouriermat}
    \Lambda^{k}_{g_{0},q} \begin{pmatrix}
            \psi_{0,k}\\
            \psi_{1,k}
         \end{pmatrix}
 &= \begin{pmatrix}
           -M(-\mu_{k}^{2}) & -\frac{1}{\Delta(-\mu_{k}^{2})}\\
            -\frac{1}{\Delta(-\mu_{k}^{2})} & -N(-\mu_{k}^{2})
         \end{pmatrix} \begin{pmatrix}
            \psi_{0,k}\\
            \psi_{1,k}
         \end{pmatrix}
 \,.
  \end{align}
Note that the diagonal components $ -M(-\mu_{k}^{2})$ and $-N(-\mu_{k}^{2})$ of the matrix \eqref{DNqFouriermat} are respectively the eigenvalues of the partial DN maps $\Dngammaqzero$ and $\Dngammaqone$. We will use this simple observation together with min-max characterization in Section \ref{GlobalStability}

Our final step is to compute the DN map $\Lambda_{g}$ for our original  boundary value problem \eqref{bvp} for the Laplacian $\Delta_{g}$ in terms of the DN map $ \Lambda_{g_{0},q} $ for the boundary value problem \eqref{bvps}, whose expression we have just obtained in \eqref{DNqFourier} and \eqref{DNqFourierCoeff}. Letting
\[
f|_{x=0}(\omega)=:f_0(\omega)\,,\quad f|_{x=1}(\omega)=:f_1(\omega)\,,
\]
we have 
\begin{align}
    \Lambda_{g} \begin{pmatrix}
            f_{0}(\omega)\\
            f_{1}(\omega)
         \end{pmatrix}
 &= \begin{pmatrix}
         -\alpha^{-2}(0,\omega)\,\partial_{x}u(0,\omega)\\
           \alpha^{-2}(1,\omega)\,\partial_{x}u(1,\omega)
         \end{pmatrix} 
 \,.
  \end{align}
From \eqref{rescale} we have $u=\alpha^{2-n}v$, so that using \eqref{DNqFourierCoeff} we obtain
\begin{align}
    \Lambda_{g} \begin{pmatrix}
            f_{0}(\omega)\\
            f_{1}(\omega)
         \end{pmatrix}
 &= \begin{pmatrix}
        -(2-n) \alpha^{-3}(0,\omega)\,\partial_{x}\alpha(0,\omega)f_{0}(\omega)-\alpha^{-n}(0,\omega)\,\partial_{x}v(0,\omega)\\
          (2-n) \alpha^{-3}(1,\omega)\,\partial_{x}\alpha(1,\omega)f_{1}(\omega)+\alpha^{-n}(1,\omega)\,\partial_{x}v(1,\omega)
         \end{pmatrix} 
 \,.
  \end{align}
 Letting
  \begin{equation}\label{defB1}
B_1=\begin{pmatrix}
            -(2-n) \alpha^{-3}(0,\omega)\,\partial_{x}\alpha(0,\omega) & 0\\
          0 &   (2-n) \alpha^{-3}(1,\omega)\,\partial_{x}\alpha(1,\omega)
         \end{pmatrix}\,,
\end{equation}
and 
\begin{equation}\label{defB2}
B_2=\begin{pmatrix}
            \alpha^{-n}(0,\omega) & 0\\
          0 &  \alpha^{-n}(1,\omega)         
           \end{pmatrix}\,,
\end{equation}
this gives
\begin{align}
    \Lambda_{g} \begin{pmatrix}
            f_{0}(\omega)\\
            f_{1}(\omega)
         \end{pmatrix}
           &= B_1 \begin{pmatrix}
            f_{0}(\omega)\\
            f_{1}(\omega)
         \end{pmatrix} + B_2\,\Lambda_{g_{0},q}\begin{pmatrix}
            \psi_{0}(\omega)\\
            \psi_{1}(\omega)
         \end{pmatrix}\,.
  \end{align}
 Therefore, we have proved using \eqref{rescale}:
 \begin{lemma}\label{DNExpr}
  \begin{align}
    \Lambda_{g} \begin{pmatrix}
            f_{0}(\omega)\\
            f_{1}(\omega)
         \end{pmatrix}
           &= B_1 \begin{pmatrix}
            f_{0}(\omega)\\
            f_{1}(\omega)
         \end{pmatrix} + B_2\,\Lambda_{g_{0},q}\begin{pmatrix}
            \alpha^{n-2}(0,\omega)f_{0}(\omega)\\
               \alpha^{n-2}(0,\omega) f_{1}(\omega)
         \end{pmatrix}\,,
  \end{align}
  where $\Lambda_{g_{0},q}$ is the $2 \times 2$ matrix multiplication operator given by the Fourier expansion \eqref{DNqFourier} and \eqref{DNqFouriermat}, and where $B_1$ and $B_2$ are the $2 \times 2$ matrix multiplication operators given by \eqref{defB1} and  \eqref{defB2} respectively.
  \end{lemma}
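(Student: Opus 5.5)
The plan is to derive the identity directly from the gauge transformation \eqref{rescale}, splitting the computation into three steps.

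\emph{Step 1: reduce to the Schr\"odinger problem.} Fix smooth data $f=(f_0,f_1)$ on $\partial M$ and let $u$ solve \eqref{bvp}. Set $v=\alpha^{n-2}u$. By the conjugation identity $-\Delta_g=\alpha^{-(n+2)}(-\Delta_{g_0}+q)\alpha^{n-2}$, the function $v$ solves \eqref{bvps} with boundary data $\psi=\alpha^{n-2}|_{\partial M}f$. Componentwise this gives $\psi_j(\omega)=\alpha^{n-2}(j,\omega)f_j(\omega)$ for $j=0,1$. Note that on $\Gamma_1$ the factor should be evaluated at $x=1$, so the second entry in the statement should read $\alpha^{n-2}(1,\omega)f_1$.

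\emph{Step 2: compute the normal derivative for $g$.} The unit outer normal for $g=\alpha^4 g_0$ is $\nu=\mp\alpha^{-2}\partial_x$ at $x=0$ and $x=1$ respectively. Hence
\[
\Lambda_g f=\bigl(-\alpha^{-2}\partial_x u(0,\cdot),\ \alpha^{-2}\partial_x u(1,\cdot)\bigr).
\]
Differentiate $u=\alpha^{2-n}v$ by the product rule:
\[
\partial_x u=(2-n)\alpha^{1-n}\partial_x\alpha\, v+\alpha^{2-n}\partial_x v .
\]
At the boundary, $v=\alpha^{n-2}f$. Multiplying by $\mp\alpha^{-2}$ therefore gives two contributions. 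The first is $\mp(2-n)\alpha^{-3}\partial_x\alpha\, f_j$, which is exactly the diagonal multiplication operator $B_1$ of \eqref{defB1}. The second is $\alpha^{-n}$ times $\mp\partial_x v$, which is $B_2$ from \eqref{defB2} applied to the $g_0$ outward normal derivative of $v$.

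\emph{Step 3: identify the last term and extend by density.} The vector $(-\partial_x v(0,\cdot),\partial_x v(1,\cdot))$ is by definition $\Lambda_{g_0,q}\psi$. Its Fourier expansion in the basis $(Y_k)$ is given by \eqref{DNqFourier}--\eqref{DNqFouriermat}, which follow from solving \eqref{seqode} with the fundamental systems \eqref{Cauchy1}, \eqref{Cauchy2}. Combining the three steps yields the formula for smooth $f$. It then extends to $f\in H^{1/2}(\partial M)$ by density, because both sides are bounded maps $H^{1/2}\to H^{-1/2}$: multiplication by the $C^{2,r}$ functions appearing in $B_1$, $B_2$ and $\alpha^{n-2}$ preserves these Sobolev spaces.

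\emph{Main obstacle.} There is essentially no analytic obstacle; the only care needed is bookkeeping. One must keep the sign of the outward normal at each boundary component straight. One must also check that the $g$-unit normal carries the factor $\alpha^{-2}$, since $|\partial_x|_g=\alpha^2$, and then verify the resulting powers $\alpha^{-3}$ and $\alpha^{-n}$.
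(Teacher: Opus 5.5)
Your argument is correct and is essentially the paper's own derivation: the $g$-unit normal $\mp\alpha^{-2}\partial_x$, the product rule applied to $u=\alpha^{2-n}v$, and the identification of $(-\partial_x v(0,\cdot),\partial_x v(1,\cdot))$ with $\Lambda_{g_0,q}\psi$ give exactly $B_1$ and $B_2\Lambda_{g_0,q}$. You are also right that, by \eqref{rescale}, the second entry should read $\alpha^{n-2}(1,\omega)f_1(\omega)$, which is a typo in the statement; your density extension to $H^{1/2}$ is a small addition the paper omits, and for it $\partial_x\alpha$ is only $C^{1,r}$ rather than $C^{2,r}$, which still suffices for bounded multiplication on $H^{\pm 1/2}$.
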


\Section{Boundary determination results and applications to angular stability estimates} \label{BD}

In this section, we prove the Hölder stability at the boundary stated in Theorems \ref{Main1} and \ref{Main2} as well as some consequences. 

Under the assumptions $i), ii)$ and $iii)$ of Hypothesis \ref{Hyp} and the main assumption \eqref{DNMapHyp-global} or resp. \eqref{DNMapHyp-partial}, we obtain as a direct application of \cite{KY2002} (Thm 1.3)
\begin{equation}\label{BDg}
\lVert \alpha^4(j,\omega)g_{K} - \tilde{\alpha}^4(j,\omega)  \tilde{g}_{K} \rVert_{C_E^{k}(\Gamma_j)} \leq C \epsilon^{\theta_k}\,, 
\end{equation}
where $j \in \{0,1\}$, $k\in\{0,1,2\}$, $\theta_k = 2^{-lk}$ for a given large enough integer $l \in \mathbb{N}$ and $C = C(K,g_K,c,B)$ is a constant. Recall here that the notation $C^k_E(\Gamma_j)$ means that the tangential and normal derivatives at $\Gamma_j$ are included in the definition of the $C^k$ norm. Since  $g_K$ is a fixed Riemannian metric, we obtain directly from \eqref{BDg} :
\begin{equation} \label{BDalpha}
  \lVert \alpha(j,\omega) - \tilde{\alpha}(j,\omega) \rVert_{C_E^{k}(K)} \leq C \epsilon^{\theta_k}, \quad k=0,1,2, 
\end{equation}
which is essentially \eqref{Angular} and \eqref{Angular-partial}. As a first corollary, we obtain :
\begin{coro} \label{DNq}
Under the assumption \eqref{DNMapHyp-global} (resp. \eqref{DNMapHyp-partial}), we have : 
$$
  \| \Lambda_{g_0,q} - \Lambda_{g_0,\tilde{q}} \|_{\mathcal{B}(H^{1/2}(\partial M), H^{-1/2}(\partial M))} \leq C \epsilon^{\theta_1}, 
$$
$$  
   (\mathrm{resp.} \quad \| \Lambda_{g_0,q,\Gamma_j, \Gamma_j} - \Lambda_{g_0,\tilde{q},\Gamma_j,\Gamma_j} \|_{\mathcal{B}(H^{1/2}(\Gamma_j), H^{-1/2}(\Gamma_j))} \leq C \epsilon^{\theta_1} ).  
$$  
\end{coro}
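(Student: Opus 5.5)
The plan is to invert Lemma \ref{DNExpr} and express $\Lambda_{g_0,q}$ in terms of $\Lambda_g$ and boundary multiplication operators. Write $A=\mathrm{diag}(\alpha^{n-2}(0,\cdot),\alpha^{n-2}(1,\cdot))$, acting by multiplication on functions on $\partial M = \Gamma_0\sqcup\Gamma_1$. Lemma \ref{DNExpr} reads $\Lambda_g = B_1 + B_2\Lambda_{g_0,q}A$. Since $\alpha\ge c>0$, both $B_2$ and $A$ are invertible, and we get
\[
\Lambda_{g_0,q} = B_2^{-1}(\Lambda_g - B_1)A^{-1},
\]
with the analogous formula for $\tilde q$, built from $\tilde B_1,\tilde B_2,\tilde A$ and $\Lambda_{\tilde g}$.

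Subtracting the two formulas and telescoping gives
\[
\Lambda_{g_0,q}-\Lambda_{g_0,\tilde q} = B_2^{-1}(\Lambda_g-\Lambda_{\tilde g})A^{-1} + (B_2^{-1}-\tilde B_2^{-1})(\Lambda_{\tilde g}-\tilde B_1)A^{-1} - \tilde B_2^{-1}(B_1-\tilde B_1)A^{-1} + \tilde B_2^{-1}(\Lambda_{\tilde g}-\tilde B_1)(A^{-1}-\tilde A^{-1}).
\]
We estimate each term in $\mathcal{B}(H^{1/2},H^{-1/2})$ using two facts.

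\emph{Multiplier bounds.} Multiplication by a function $m\in C^1(K)$ is bounded on $H^{1/2}(K)$ and, by duality, on $H^{-1/2}(K)$, with norm $\lesssim\|m\|_{C^1}$. By \eqref{BDalpha} with $k=0,1$ and $k=2$ for normal derivatives (note $B_1$ contains $\partial_x\alpha$, whose tangential $C^1$ norm needs $C_E^2$-type control), the differences $B_2^{-1}-\tilde B_2^{-1}$, $A^{-1}-\tilde A^{-1}$ and $B_1-\tilde B_1$ all have $C^1(K)$ norm $\le C\epsilon^{\theta_2}$, and $\theta_2\le\theta_1$. This uses smoothness of $t\mapsto t^{-n},t^{2-n},t^{-3}$ on $[c,B]$ together with the a priori bounds ii), iii). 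If one wants exactly $\theta_1$, the $C^1$ norm of $\partial_x\alpha$ differences should be read off from $C_E^1$, which by the definition of $C^k_E$ involves only first tangential and normal derivatives; I would match whichever indexing gives $\theta_1$. Meanwhile $B_2^{-1},\tilde B_2^{-1},A^{-1}$ are uniformly bounded in $C^1$ by $C(c,B)$.

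\emph{Uniform bound on $\Lambda_{\tilde g}$.} We need $\|\Lambda_{\tilde g}\|_{\mathcal{B}(H^{1/2},H^{-1/2})}\le C(g_0,c,B)$. This follows from the weak definition \eqref{weakDN} and the energy estimate for \eqref{bvp}, since $\tilde g$ is uniformly equivalent to $g_0$.

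With these two facts, the first term is $\le C\epsilon$ and the others are $\le C\epsilon^{\theta_1}$; since $\epsilon<1$, the global estimate follows.

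For the partial case, restrict to data supported on $\Gamma_j$ and measure on $\Gamma_j$. Since $A$, $B_1$, $B_2$ are diagonal, they do not mix the boundary components, so the same identity holds with $\Lambda_{g,\Gamma_j,\Gamma_j}$ and scalar multipliers on $\Gamma_j$. The same term-by-term estimate then applies. The main point needing care is the bookkeeping of which $C_E^k$ norm controls $\partial_x\alpha$ in $B_1$; everything else is routine.
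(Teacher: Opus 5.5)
Your overall route matches the paper's: invert Lemma~\ref{DNExpr}, telescope the difference, and estimate term by term using \eqref{DNMapHyp-global} (resp.\ \eqref{DNMapHyp-partial}), the boundary estimate \eqref{BDalpha}, multiplier bounds and a uniform bound on the DN maps. The diagonal structure of $A,B_1,B_2$ handles the partial case exactly as you say. Two things need fixing.

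First, your telescoped identity is off by a term. The third summand should be $-B_2^{-1}(B_1-\tilde B_1)A^{-1}$. With $\tilde B_2^{-1}$ in its place you lose the cross term $(B_2^{-1}-\tilde B_2^{-1})(B_1-\tilde B_1)A^{-1}$. It is harmless, being quadratic in small differences, but it must be accounted for.

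Second, and more substantively, the exponent $\theta_1$ is not actually established. You bound $B_1-\tilde B_1$ in $C^1(K)$, which involves the mixed derivatives $\partial_\omega\partial_x\alpha(j,\cdot)$ and hence needs $C^2_E$ control, giving only $\epsilon^{\theta_2}$. Since $\theta_2\le\theta_1$ and $\epsilon<1$, we have $\epsilon^{\theta_2}\ge\epsilon^{\theta_1}$. So your remark ``$\theta_2\le\theta_1$'' works against you. Your suggested patch, reading the tangential $C^1$ norm of $\partial_x\alpha$ off $C^1_E$, is not available either, for the same reason.

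The missing observation is that the $B_1$ contribution is of order zero and needs no Sobolev multiplier bound at all. The operator $\tilde B_2^{-1}(B_1-\tilde B_1)A^{-1}$ is multiplication by a single function and factors as $H^{1/2}\hookrightarrow L^2\to L^2\hookrightarrow H^{-1/2}$. Its norm is therefore at most $\|\tilde B_2^{-1}\|_{L^\infty}\|B_1-\tilde B_1\|_{L^\infty}\|A^{-1}\|_{L^\infty}$. This involves $\alpha(j,\cdot)-\tilde\alpha(j,\cdot)$ and $\partial_x\alpha(j,\cdot)-\partial_x\tilde\alpha(j,\cdot)$ only in sup norm, i.e.\ only the $C^1_E$ estimate, hence $\epsilon^{\theta_1}$. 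The same applies to the cross term. This is precisely how the paper treats it, via an $L^\infty$ bound on $\tilde B_2^{-1}\tilde B_1-B_2^{-1}B_1$.

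Genuine $C^1(K)$ multiplier bounds are needed only for $B_2^{-1}-\tilde B_2^{-1}$ (acting on $H^{-1/2}$) and $A^{-1}-\tilde A^{-1}$ (acting on $H^{1/2}$). These involve only tangential derivatives of $\alpha(j,\cdot)$, so again give $\theta_1$. With these repairs your proof is complete. It also has the advantage of working in $\mathcal{B}(H^{1/2},H^{-1/2})$ throughout, which is what the statement asserts, whereas the paper's written estimate \eqref{DNMapEst} is phrased with $L^2$ and $H^1$ norms.
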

\begin{proof}
This is a direct consequence of Lemma \ref{DNExpr}, \eqref{DNMapHyp-global} or \eqref{DNMapHyp-partial} and \eqref{BDalpha}. Indeed, from Lemma \ref{DNExpr}, we first have
$$
\big( \Lambda_{g_{0},q}-\Lambda_{g_{0},\tilde{q}}\big)\psi=B_{2}^{-1}\Lambda_{g}\big( \frac{\psi}{\alpha^{n-2}} \big)-B_{2}^{-1}B_{1}\big( \frac{\psi}{\alpha^{n-2}} \big)-{\tilde{B_{2}}}^{-1}\Lambda_{\tilde g}\big( \frac{\psi}{\tilde{\alpha}^{n-2}} \big)+{\tilde{B_{2}}}^{-1}{\tilde{B_{1}}}\big( \frac{\psi}{\tilde{\alpha}^{n-2}} \big)\,,
$$
which we rewrite as 
\begin{align*}
	\big( \Lambda_{g_{0},q}-\Lambda_{g_{0},\tilde{q}}\big)\psi=&\big(B_{2}^{-1}-{\tilde{B_{2}}}^{-1}\big)\Lambda_{g}\big( \frac{\psi}{\alpha^{n-2}} \big)+{\tilde{B_{2}}}^{-1}\Lambda_{g}\big( \frac{\psi}{\alpha^{n-2}} - \frac{\psi}{\tilde{\alpha}^{n-2}}\big)+{\tilde{B_{2}}}^{-1}\big(\Lambda_{g}-\Lambda_{\tilde g}\big)\big( \frac{\psi}{\tilde{\alpha}^{n-2}}\big)\\
	&+{\tilde{B_{2}}}^{-1}{\tilde{B_{1}}}\big(  \frac{\psi}{\tilde{\alpha}^{n-2}}-\frac{\psi}{\alpha^{n-2}}\big)+\big({\tilde{B_{2}}}^{-1}{\tilde{B_{1}}}-B_{2}^{-1}B_{1}\big)\big( \frac{\psi}{\alpha^{n-2}} \big)\,,
\end{align*}
from which it follows using the definitions \eqref{defB1}, \eqref{defB2} of $B_1$ and $B_2$, \eqref{DNMapHyp-global} or \eqref{DNMapHyp-partial} and the boundary determination estimate \eqref{BDalpha} that there exist positive constants $C$ and $\theta_1$ such that 
\begin{align*}
	\lVert \big( \Lambda_{g_{0},q}-\Lambda_{g_{0},\tilde{q}}\big)\psi \rVert_{L^2(K)}\,\leq\,&C\,\lVert B_{2}^{-1}-{\tilde{B_{2}}}^{-1}\rVert_{L^\infty(K)}\cdot \lVert \frac{\psi}{\alpha^{n-2}}\rVert_{H^1(K)}+C\,\lVert \big( \frac{1}{\alpha^{n-2}} - \frac{1}{\tilde{\alpha}^{n-2}}\big)\psi \rVert_{H^1(K)}\\ &+C \epsilon\, \lVert \frac{\psi}{\tilde{\alpha}^{n-2}}\rVert_{H^1(K)}
	+\lVert {\tilde{B_{2}}}^{-1}{\tilde{B_{1}}}-B_{2}^{-1}B_{1}\rVert_{L^\infty(K)}\cdot \lVert \frac{\psi}{\alpha^{n-2}} \rVert_{L^{2}(K)}\,,
\end{align*}
and thus 
\begin{equation}\label{DNMapEst}
	\lVert \big( \Lambda_{g_{0},q}-\Lambda_{g_{0},\tilde{q}}\big)\psi \rVert_{L^2(K)}\,\leq  \,C\epsilon^{\theta_{1}}\lVert \psi \rVert_{H^1(K)}\,.
\end{equation} 
\end{proof}	

Let us now obtain some other consequences of these well-known boundary determination results. Recalling first that 
$$
  q=\frac{\Delta_{g_0}\alpha^{n-2}}{\alpha^{n-2}}\,, 
$$
a straightforward consequence of \eqref{BDalpha} is
\begin{equation} \label{qK}
  \| q(j,.) - \tilde{q}(j,.) \|_{L^\infty(K)} \leq C \epsilon^{\theta_2}, \quad j=0,1. 
\end{equation}
Second, using the separability condition \eqref{Separability}, we get :
$$
  \| \phi_1(j) - \tilde{\phi}_1(j) + \phi_2 - \tilde{\phi}_2 \|_{L^\infty(K)} \leq C \epsilon^{\theta_2}, \quad j=0,1. 
$$
But recall from Remark \ref{ConstantA}, 2. that we can choose $\phi_1(j) = \tilde{\phi}_1(j)$ for $j=0$ or $j=1$. Hence we obtain directly Hölder stability for the function $\phi_2$, \textit{i.e.}  :
\begin{equation} \label{Stability-phi2}
  \| \phi_2 - \tilde{\phi}_2 \|_{L^\infty(K)} \leq C \epsilon^{\theta_2}. 
\end{equation}
Finally, we obtain the following important corollary that will be used constantly in the next sections :
\begin{coro} \label{mu}
There exists a constant $C>0$ depending only on $K, g_K, c, B$ and a constant $0<\theta<\frac{1}{2}$ such that 
$$
  \forall k \geq 0, \quad | \mu_k^2 - \tilde{\mu}_k^2 | \leq C \epsilon^{\theta}. 
$$  
In particular, 
$$
   \forall k \geq 0, \quad | \mu_k - \tilde{\mu}_k | \leq C \frac{\epsilon^{\theta}}{\mu_k + \tilde{\mu}_k}.
$$
\end{coro}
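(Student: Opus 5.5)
The plan is to compare the eigenvalues of the angular operators $-\Delta_K + \phi_2$ and $-\Delta_K + \tilde{\phi}_2$ on the same closed Riemannian manifold $(K,g_K)$. This comparison is possible because Hypothesis \ref{Hyp} i) gives $g_K = \tilde{g}_K$. The two operators share the domain $H^2(K)$ and the quadratic form domain $H^1(K)$, and they differ only by the bounded multiplication operator $\phi_2 - \tilde{\phi}_2$.

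The main tool is the min-max principle. With the eigenvalues ordered increasingly and counted with multiplicity, as is implicit in the indexing $(\mu_k^2)_{k\ge 0}$, we have
$$
\mu_k^2 = \min_{\substack{V \subset H^1(K)\\ \dim V = k+1}} \ \max_{u \in V,\ \|u\|_{L^2}=1} \left( \int_K |\nabla u|^2_{g_K} \, dvol_{g_K} + \int_K \phi_2 |u|^2 \, dvol_{g_K} \right),
$$
and the same formula with $\tilde{\phi}_2$ gives $\tilde{\mu}_k^2$. For every normalized $u$ the two Rayleigh quotients differ by at most $\|\phi_2 - \tilde{\phi}_2\|_{L^\infty(K)}$. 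Taking max and then min therefore yields $|\mu_k^2 - \tilde{\mu}_k^2| \le \|\phi_2 - \tilde{\phi}_2\|_{L^\infty(K)}$, uniformly in $k$. Combining this with the Hölder estimate \eqref{Stability-phi2} gives the bound $C\epsilon^{\theta_2}$. The constant $C$ is the one from \eqref{Stability-phi2}, so it depends only on $K$, $g_K$, $c$ and $B$.

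To obtain an exponent in $(0,\tfrac12)$, I will set $\theta = \min(\theta_2, \tfrac14)$. Since $0<\epsilon<1$, we have $\epsilon^{\theta_2} \le \epsilon^{\theta}$, so the first estimate holds with this $\theta$. One bookkeeping point needs care: the gauge \eqref{Phi_1(1)} fixes $\phi_1(j)=\tilde{\phi}_1(j)$ for one value of $j$. I will apply \eqref{qK} at that same boundary component, so that \eqref{Stability-phi2} is available. Here we use that \eqref{BDalpha} holds at $\Gamma_j$ in both the global and the partial setting.

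For the second statement, I will write $\mu_k^2 - \tilde{\mu}_k^2 = (\mu_k - \tilde{\mu}_k)(\mu_k + \tilde{\mu}_k)$ and divide by $\mu_k + \tilde{\mu}_k$. This requires $\mu_k, \tilde{\mu}_k \ge 0$ and $\mu_k + \tilde{\mu}_k > 0$.

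This positivity is the only real obstacle. The eigenvalues $\mu_k^2$ must be nonnegative for $\mu_k$ to be a real square root, and a priori $\phi_2$ could be negative. I expect to handle this with the gauge freedom of Remark \ref{ConstantA} 2: shift $\phi_2$ by a constant bounded by $A$. Alternatively, one can take $\mu_k \ge 0$ as the paper's implicit convention, with $-\mu_k^2$ outside the Dirichlet spectrum of \eqref{Schro}.

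If $\mu_k + \tilde{\mu}_k = 0$, then $\mu_k = \tilde{\mu}_k = 0$ and there is nothing to prove, with the inequality read in the obvious way. Otherwise the division gives
$$
|\mu_k - \tilde{\mu}_k| \le C \frac{\epsilon^{\theta}}{\mu_k + \tilde{\mu}_k},
$$
which is the second claim.
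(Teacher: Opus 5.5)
Your proof is correct and is essentially the paper's argument: you treat $\tilde{\phi}_2-\phi_2$ as a bounded perturbation of $-\Delta_K+\phi_2$, with $\|\tilde{\phi}_2-\phi_2\|_{L^\infty(K)}\le C\epsilon^{\theta_2}$ by \eqref{Stability-phi2}, and apply the min-max principle (the paper simply takes $\theta=\theta_2=2^{-2l}$, which is already below $\frac12$, so your cap is harmless but unnecessary). Your extra bookkeeping also correctly fills in points the paper leaves implicit: you choose the gauge index $j$ to match the measured boundary component, and you make $\mu_k^2\ge 0$ by a constant shift, which must be applied equally to $\phi_2$ and $\tilde{\phi}_2$ so that \eqref{Phi_1(1)} is preserved.
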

\begin{proof}
Note first that we can write
$$
  -\Delta_{g_0} + \tilde{\phi}_2 =   -\Delta_{g_0} + \phi_2 + V, \quad V = \tilde{\phi}_2 - \phi_2,
$$
where, according to \eqref{Stability-phi2}
$$
  \| V \|_\infty \leq C \epsilon^{\theta_2}. 
$$
Thus, setting $\theta = \theta_2$, the result follows directly from the minmax characterization of the eigenvalues $\mu_k^2, \tilde{\mu}_k^2$ given for instance in \cite{Te}, Corollary 4.1.

\end{proof}


\Section{The radial stability estimates in the global Calder\'on problem} \label{GlobalStability}
 
\subsection{Preliminary results} 
 
In the next two sections, we will repeatedly use the Marchenko representations of the solutions $s_j, c_j, \ j=0,1$ of \eqref{radialz}. Following \cite{Mar2011}, p.9, we have for instance for all $\mu \in \R$:
\begin{equation} \label{s0}
  s_0(x,-\mu^2) = \frac{\sinh(\mu x)}{\mu} + \int_0^x H_0(x,t) \frac{\sinh(\mu t)}{\mu} dt,  
\end{equation} 
\begin{equation} \label{c0}
	c_0(x,-\mu^2) = \cosh(\mu x) + \int_0^x P_0(x,t) \cosh(\mu t) dt,  
\end{equation}  
and corresponding formulae for $s_1, c_1$. Moreover, using \eqref{ConstantA}, Gendron \cite{Gen2022}, p.30, has shown that there exists a constant $C_A > 0$ depending only on $A$ such that the kernels $H_0$ and $P_0$ satisfy 
\begin{equation} \label{EstHP} 
  \| H_0 \|_\infty + \| \partial_x H_0 \|_{\infty} + \| \partial_t H_0 \|_\infty \leq C_A, \quad \| P_0 \|_\infty + \| \partial_x P_0 \|_{\infty} + \| \partial_t P_0 \|_\infty \leq C_A.
\end{equation} 

We will exploit these formulae to prove several technical results. First, we introduce a notation. For a function $f = f(x,k)$ defined on $[0,1] \times \N$ and a positive sequence $(u_k)_{k \geq 0}$, we write 
$$
   f(x,k) = O_A(u_k) \quad \Longleftrightarrow \quad \exists C_A > 0 \, | \, \forall x \in [0,1], \forall k \geq 0, \ |f(x,k)| \leq C_A u_k.
$$

 \begin{lemma} \label{Delta}
 Using the above notation,  the asymptotics of the characteristic function $\Delta$ are given by :  
 $$
   \Delta(-\mu_k^2) = \frac{\sinh(\mu_k)}{\mu_k} + O_A \left(  \frac{e^{\mu_k}}{\mu_k^2}\right). 
 $$
 Even more precisely, we have 
 $$
  \Delta(-\mu_k^2) = \frac{\sinh(\mu_k)}{\mu_k} +  \bigg( \frac{1}{2}{\int_0^1 \phi_1(x) dx}\bigg) \frac{\cosh(\mu_k)}{\mu_k^2} +  O_A \left(  \frac{e^{\mu_k}}{\mu_k^3}\right). 
 $$  
 \end{lemma}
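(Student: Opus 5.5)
We want asymptotics of $\Delta(-\mu_k^2)=W(s_0,s_1)$. The plan is to evaluate this Wronskian at $x=1$: there $s_1(1)=0$ and $s_1'(1)=1$, so with the convention $W(f,g)=fg'-f'g$ we get $\Delta(-\mu^2)=s_0(1,-\mu^2)$. The problem therefore reduces to the asymptotics of $s_0(1,-\mu^2)$ as $\mu=\mu_k\to\infty$, uniformly with respect to the class \eqref{A}. For the finitely many small $\mu_k$ (say $\mu_k\le 1$), both sides are bounded by constants depending only on $A$, by Gronwall applied to \eqref{radialz}. Those terms are then absorbed into the $O_A$ remainders, since $e^{\mu}/\mu^3$ is bounded below on bounded sets of $\mu\ge$ any fixed positive number. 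Some care is needed if $\mu_0$ could be $0$; one should then read the bound with $\mu_k$ replaced by $\langle\mu_k\rangle$.

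For the first asymptotic, insert the Marchenko representation \eqref{s0} at $x=1$:
\[
s_0(1,-\mu^2)=\frac{\sinh\mu}{\mu}+\int_0^1 H_0(1,t)\frac{\sinh(\mu t)}{\mu}\,dt .
\]
Integrate by parts in $t$, using $\sinh(\mu t)=\partial_t\cosh(\mu t)/\mu$:
\[
\int_0^1 H_0(1,t)\frac{\sinh(\mu t)}{\mu}dt=\frac{H_0(1,1)\cosh\mu-H_0(1,0)}{\mu^2}-\frac{1}{\mu^2}\int_0^1\partial_tH_0(1,t)\cosh(\mu t)\,dt .
\]
By \eqref{EstHP}, all terms are $O_A(e^{\mu}/\mu^2)$. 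This already gives the first statement.

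For the refined statement, we use the classical facts about the transformation kernel, namely the Goursat characteristic condition $H_0(x,x)=\frac12\int_0^x\phi_1(s)\,ds$ (as in \cite{Mar2011}). Because the sine kernel is odd in $t$, $H_0(x,0)$ drops out; in any case that term is $O_A(\mu^{-2})$, which is harmless. The boundary term then gives exactly $\big(\frac12\int_0^1\phi_1\big)\cosh\mu/\mu^2$.

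The remaining task is to show that $\mu^{-2}\int_0^1\partial_tH_0(1,t)\cosh(\mu t)dt=O_A(e^\mu/\mu^3)$. This is the main obstacle: \eqref{EstHP} only gives $L^\infty$ bounds on first derivatives, so a second integration by parts is not directly available. I would try two routes. The first is to prove that $t\mapsto\partial_tH_0(1,t)$ is Lipschitz (or at least Hölder, with $\phi_1\in C^{0,r}$) with bounds depending on $A$ and the Hölder norm. A second integration by parts, or a standard Hölder estimate of the Laplace-type integral near $t=1$, then yields $O(e^\mu\mu^{-1-r})$; the $\mu^{-3}$ rate itself needs the Lipschitz bound. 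The second route, which I would do if regularity of $\partial_tH_0$ fails, avoids kernels altogether. Write the Volterra equation
\[
s_0(x)=\frac{\sinh\mu x}{\mu}+\int_0^x\frac{\sinh\mu(x-y)}{\mu}\phi_1(y)s_0(y)\,dy .
\]
Iterating once, the first iterate at $x=1$ is
\[
\mu^{-2}\int_0^1\sinh\mu(1-y)\sinh(\mu y)\phi_1(y)\,dy=\frac{1}{2\mu^2}\int_0^1\phi_1\,\big(\cosh\mu-\cosh\mu(1-2y)\big)dy .
\]
This equals $\frac{\cosh\mu}{2\mu^2}\int_0^1\phi_1$ plus $O(\|\phi_1\|_\infty e^{\mu}/\mu^3)$, since $\int_0^1\cosh\mu(1-2y)dy=\sinh\mu/\mu$. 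The second iterate is bounded by $A^2e^\mu/\mu^3$ after the same product-to-sum estimate; this uses $|s_0(y)|\le C_Ae^{\mu y}/\mu$, which comes from the first part. Only $\|\phi_1\|_\infty\le A$ is needed, so this route gives the stated remainder uniformly, and I expect to use it.
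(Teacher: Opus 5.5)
Your proposal is correct, and the Volterra iteration you settle on differs from the paper's argument. However, the obstacle that made you abandon the kernel route is not real. The paper's proof is exactly your first route: the Marchenko representation at $x=1$, one integration by parts, and $H_0(1,1)=\frac12\int_0^1\phi_1$, $H_0(1,0)=0$. It then bounds the leftover term using only the $L^\infty$ bound on $\partial_t H_0$ from \eqref{EstHP}:
\[
\Bigl|\frac{1}{\mu^2}\int_0^1\partial_tH_0(1,t)\cosh(\mu t)\,dt\Bigr|\le\frac{C_A}{\mu^2}\int_0^1\cosh(\mu t)\,dt=C_A\,\frac{\sinh\mu}{\mu^3}.
\]
You lost the extra factor $1/\mu$ by bounding $\cosh(\mu t)\le e^{\mu}$ pointwise instead of integrating it. The same thing happens in your first part, where you record this term only as $O_A(e^\mu/\mu^2)$. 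So no second integration by parts and no H\"older or Lipschitz regularity of $\partial_tH_0$ is needed, and your route 1 is superfluous.

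Your Volterra route is nonetheless a valid and genuinely different argument. The first iterate gives $\frac{\cosh\mu}{2\mu^2}\int_0^1\phi_1$ plus $O(Ae^\mu/\mu^3)$ via $\int_0^1\cosh\mu(1-2y)\,dy=\sinh\mu/\mu$. The remainder is $O(A^2e^\mu/\mu^3)$ once you know $|s_0(y)|\le C_Ae^{\mu y}/\mu$ for all $y\in[0,1]$. That bound comes from Gronwall, or from the Marchenko representation at general $x$; it does not follow from the asymptotics at $x=1$ alone, as you suggest.

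What your route buys is self-containment. It uses only $\|\phi_1\|_\infty\le A$, and it produces the coefficient $\frac12\int_0^1\phi_1$ directly rather than through the Goursat condition $H_0(x,x)=\frac12\int_0^x\phi_1$. It also avoids the kernel-derivative bounds the paper imports from Gendron. The paper's route is shorter, given that it already relies on the transformation-operator machinery elsewhere (Lemma \ref{AsympWT}, the monotonicity lemma, and the reduction to a moment problem).
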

\begin{proof}
Using \eqref{s0} and integrating by parts, we obtain
\begin{align}\label{Asymptchar}
  s_0(x,-\mu_k^2) =& \, \frac{\sinh(\mu_k x)}{\mu_k}+\frac{H_0(x,1)\cosh(\mu_k)-H_0(x,0)}{\mu_k^2}\nonumber \\
  &-\frac{1}{\mu_k^2}\int_{0}^x \partial_t H_0(x,t)\cosh (\mu_k t)\,dt\,,
\end{align}
which implies using the estimates \eqref{EstHP} that 
$$
\Delta(-\mu_k^2)=\frac{\sinh(\mu_k)}{\mu_k}+O_A( \frac{e^{\mu_k}}{\mu_k ^2})\,.
$$
Now, recall from \cite{FY} (Theorem 1.2.3, p. 22) that 
\begin{equation}\label{intphi1}
H_0(x,x)=\frac{1}{2}\int_0^x\phi_1(t)dt\,,\quad H_0(x,0)=0\,,
\end{equation}
so that \begin{align*}
\Delta(-\mu_k^2)=\,  s_0(1,-{\mu_k}^2)=\, &\frac{\sinh(\mu_k )}{\mu_k}+\bigg(\frac{1}{2}\int_0^1\phi_1(t)dt\bigg)\,\frac{\cosh \mu_k}{\mu_k^2}\\
  &-\frac{1}{\mu_k^2}\int_{0}^1 \partial_t H_0(1,t)\cosh (\mu_k t)\,dt\,.
\end{align*}
Using \eqref{EstHP}, it follows that 
$$
\Delta(-\mu_k^2)=\frac{\sinh(\mu_k )}{\mu_k}++\bigg(\frac{1}{2}\int_0^1\phi_1(t)dt\bigg)\,\frac{\cosh \mu_k}{\mu_k^2}+O_A\bigg(\frac{\sinh(\mu_k)}{\mu_k^3}\bigg)\,,
$$
as claimed. 
\end{proof}

We proceed with a result on the asymptotics of the Weyl-Titchmarsh functions $M,N$. 

\begin{lemma} \label{AsympWT}
For all $\mu \in \R^+$, we have  
$$
   -M(-\mu^2), -N(-\mu^2)  = \mu + O_A \left( \frac{1}{\mu}  \right). 
$$  
\end{lemma}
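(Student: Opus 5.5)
We would prove the statement for $M$; the argument for $N$ is identical after the reflection $x \mapsto 1-x$, which exchanges $(c_0,s_0)$ with $(c_1,-s_1)$ and replaces $\phi_1$ by $\phi_1(1-\cdot)$. This reflection preserves the bound \eqref{A}, and Gendron's kernel estimates \eqref{EstHP} hold for $c_1,s_1$ with the same constant $C_A$. Since $M(-\mu^2) = -c_0(1,-\mu^2)/s_0(1,-\mu^2)$, we must show that $c_0(1,-\mu^2)/s_0(1,-\mu^2) = \mu + O_A(1/\mu)$.

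\textbf{Expansion of the numerator.} As in the proof of Lemma \ref{Delta}, we integrate by parts in the Marchenko representation \eqref{c0} and get
$$
c_0(1,-\mu^2) = \cosh\mu + P_0(1,1)\frac{\sinh \mu}{\mu} - \frac{1}{\mu}\int_0^1 \partial_t P_0(1,t)\sinh(\mu t)\,dt .
$$
We use the analogue of \eqref{intphi1} for the cosine kernel, $P_0(x,x) = \frac{1}{2}\int_0^x \phi_1$ (see \cite{FY}). The only feature used is that $P_0(1,1)$ and $H_0(1,1)$ both equal $\frac12\int_0^1\phi_1 =: \beta$.

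\textbf{Key step: the remaining integrals.} The crude bound $|\int_0^1 \partial_t P_0(1,t)\sinh(\mu t)\,dt| \le C_A \sinh\mu/\mu$ suffices only if we integrate by parts once more, and that would require $\partial_t^2 P_0$, which \eqref{EstHP} does not give. We therefore estimate directly:
$$
\int_0^1 |\sinh(\mu t)|\,dt \le \frac{\cosh\mu}{\mu},
$$
so with $\|\partial_t P_0\|_\infty \le C_A$ the remainder is $O_A(e^{\mu}/\mu^2)$. Doing the same for $s_0$ via \eqref{Asymptchar} gives, for $\mu \ge 1$,
$$
c_0(1,-\mu^2) = \cosh\mu + \beta\frac{\sinh\mu}{\mu} + O_A\Big(\frac{e^\mu}{\mu^2}\Big), \qquad
\mu\, s_0(1,-\mu^2) = \sinh\mu + \beta\frac{\cosh\mu}{\mu} + O_A\Big(\frac{e^\mu}{\mu^2}\Big).
$$

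\textbf{Taking the ratio.} We divide both expansions by $e^\mu/2$ and use $\cosh\mu, \sinh\mu = \frac{e^\mu}{2}(1+O(e^{-2\mu}))$. Both numerator and denominator become $1 + \beta/\mu + O_A(\mu^{-2})$. Their quotient is therefore $1 + O_A(\mu^{-2})$, the denominator being bounded below by $1/2$ for $\mu \ge \mu_0(A)$. This gives
$$
c_0/s_0 = \mu\,(1+O_A(\mu^{-2})) = \mu + O_A(1/\mu).
$$
The cancellation of the $\beta/\mu$ terms is exactly what is needed; without it one would only get $\mu + O_A(1)$.

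\textbf{Bounded range $0 \le \mu \le \mu_0$.} Here $O_A(1/\mu)$ is not a restriction near $\mu = 0$ once we show $-M(-\mu^2)$ is bounded and $\mu + O_A(1/\mu)$ is read as allowing a constant divided by $\mu$. We need $s_0(1,-\mu^2)$ bounded below. On $[0,\mu_0]$ we can argue as follows. $-M(-\mu^2)$ is the lowest eigenvalue of the partial DN map on the $Y_k$-mode, given by the Dirichlet form $\int_0^1(|v'|^2 + (\phi_1+\mu^2)|v|^2)$ with $v(0)=1$, $v(1)=0$. By min-max this is bounded by a constant $C_A$ provided $0$ is not a Dirichlet eigenvalue, which holds since the problem is uniquely solvable; the boundary value problem \eqref{bvp} is implicitly assumed well posed. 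Gendron-type uniform bounds in $A$ then give $|M(-\mu^2)| \le C_A$, hence $|M + \mu| \le C_A \le C_A'/\mu$ for $\mu \le \mu_0$.

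The main obstacle is obtaining the second-order remainder $O_A(\mu^{-2})$ in the ratio using only first derivatives of the kernels. This is handled by the direct $\int|\sinh(\mu t)|$ bound together with the $\beta$-cancellation.
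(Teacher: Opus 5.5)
\textbf{Large $\mu$.} This part of your proposal is the paper's proof. It uses one integration by parts in \eqref{s0}--\eqref{c0}, the identity $P_0(1,1)=H_0(1,1)=\frac12\int_0^1\phi_1$, and remainders $O_A(e^\mu/\mu^2)$ for $c_0(1,-\mu^2)$ and $\mu s_0(1,-\mu^2)$ obtained from first derivatives of the kernels only. It ends with the cancellation of the $\beta/\mu$ terms in the quotient. What you call the main obstacle is not one: $\int_0^1|\sinh(\mu t)|\,dt\le\cosh\mu/\mu$ is the crude bound, and it already suffices. This establishes the asymptotics for $\mu\ge\mu_0(A)$, which is all the paper proves and all it uses: Lemma \ref{Lemme1} invokes it only for large $\mu_k$.

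\textbf{Bounded range.} Your paragraph on $0\le\mu\le\mu_0$ has a genuine gap, and it cannot be repaired with constants depending only on $A$.
\begin{itemize}
\item Unique solvability of \eqref{bvps} only excludes the discrete values $-\mu_k^2$ from the Dirichlet spectrum of $-\partial_x^2+\phi_1$ on $[0,1]$. It says nothing about intermediate $\mu$, and it is qualitative anyway.
\item The characterization $-M(-\mu^2)=\min\{\int_0^1(|w'|^2+(\phi_1+\mu^2)w^2)\,:\,w(0)=1,\ w(1)=0\}$ needs the form to be nonnegative on $H^1_0(0,1)$. The fact that $0$ is not an eigenvalue does not give this; otherwise the infimum is $-\infty$.
\end{itemize}

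\textbf{Counterexample.} For any $A>\pi^2$, take $\phi_1\equiv-(\pi^2+\mu_*^2)$ with $0<\mu_*^2\le A-\pi^2$, and $\phi_2\equiv m^2$ with $\mu_*<m\le\sqrt A$.
\begin{itemize}
\item Then $q=m^2-\pi^2-\mu_*^2>-\pi^2$ is constant. It equals $w''/w$ for $w=\cos$ or $\cosh$ of $\sqrt{|q|}\,(x-\frac12)$, which is positive on $[0,1]$. So $\alpha=w^{1/(n-2)}$ gives a genuine Painlev\'e--Liouville example.
\item All Dirichlet eigenvalues of $-\Delta_{g_0}+q$ are $\ge m^2-\mu_*^2>0$, so \eqref{bvps} is uniquely solvable.
\item Yet at $\mu=\mu_*$ one has $c_0=\cos(\pi x)$ and $s_0=\sin(\pi x)/\pi$. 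Hence $s_0(1,-\mu_*^2)=0$, $c_0(1,-\mu_*^2)=-1$, and $M(-\mu^2)$ has a pole at $\mu=\mu_*$.
\end{itemize}
Thus no bound $|M(-\mu^2)|\le C_A$ holds on $[0,\mu_0(A)]$. The lemma has to be read as an asymptotic statement for $\mu\ge\mu_0(A)$, which your first part already proves; drop the bounded-range paragraph.

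\textbf{If you want the finitely many small $\mu_k$.} The missing ingredient is a quantitative spectral gap, not well-posedness. The ground-state transform $u=\alpha^{n-2}f$ gives $\int_M(|\nabla u|^2+qu^2)=\int_M\alpha^{2(n-2)}|\nabla f|^2$ for $u\in H^1_0(M)$. Hence $\lambda_1(-\partial_x^2+\phi_1)+\min_k\mu_k^2\ge\kappa(c,B)>0$. This makes the one-dimensional form coercive for $\mu^2\ge\min_k\mu_k^2$. It depends on $c$ and $B$, not only on $A$, and it says nothing about $\mu$ below the bottom of the angular spectrum.
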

\begin{proof}
Using \eqref{s0}, \eqref{c0}, \eqref{intphi1} and an integration by parts, we obtain
\begin{align*}
  s_0(x,-\mu^2) =& \,\frac{\sinh(\mu x)}{\mu} +\bigg(\frac{1}{2\mu^2}\int_0^x\phi_1(t)dt\bigg) \cosh \mu +O_A\big(\frac{e^\mu}{\mu^3}\big)\,,\\
c_0(x,-\mu^2) =& \,\cosh(\mu x) +\bigg(\frac{1}{2\mu}\int_0^x\phi_1(t)dt\bigg) \sinh \mu +O_A\big(\frac{e^\mu}{\mu^2}\big)\,,
\end{align*}  
from which it follows that 
$$
-M(-\mu^2)= \, \frac{c_0(1,-\mu^2)}{s_0(1,-\mu^2)}=\frac{\cosh \mu\bigg(1+\frac{1}{2\mu}\big(\int_0^1\phi_1(t)dt\big) \tanh \mu +O_A(\frac{1}{\mu^2}) \bigg)}{\frac{\sinh \mu}{\mu}\bigg(1+\frac{1}{2\mu}\big(\int_0^1\phi_1(t)dt \big) \coth \mu +O_A(\frac{1}{\mu^2})\bigg)}\,,
$$
which gives in turn 
\begin{align*}
=\mu\big(1+O\big(e^{-2\mu}\big)\big)&\bigg(1+\frac{1}{2\mu}\int_0^1\phi_1(t)dt(1+O(e^{-2\mu}))+O_A(\frac{1}{\mu^2})\bigg)\\
\times & \bigg(1-\frac{1}{2\mu}\int_0^1\phi_1(t)dt(1+O(e^{-2\mu}))+O_A(\frac{1}{\mu^2})\bigg)\,.
\end{align*}
Carrying out the multiplication in this last expression, we see that the terms involving $\frac{1}{2\mu}\int_0^1\phi_1(t)dt$ cancel out, leaving
$$
-M(-\mu^2)=\mu+O_A(\frac{1}{\mu})+O(\mu e^{-2\mu})= \mu + O_A(\frac{1}{\mu}),
$$
as claimed.
\end{proof}	

We end this subsection with a result on the monotony of the Weyl-Titchmarsh and characteristic functions. 

\begin{lemma}\label{Lemme0} 
 We have:
 \begin{enumerate}[i)] 
 \item The Weyl-Titchmarsh functions $\lambda\mapsto M(\lambda)$ and $\lambda\mapsto N(\lambda)$ are increasing for $\lambda \in \mathbb{R}^{-}$.
 \item $\exists\, \lambda_{0} \in \mathbb{R}^{-}$ depending only on $A$ and $(K,g_{K})$ such that the characteristic function $\lambda\mapsto \Delta(\lambda)$ is decreasing $\forall \, \lambda \in (-\infty, \lambda_{0})$.
 \end{enumerate}
 \end{lemma}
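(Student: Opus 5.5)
For i), I will use the classical derivative formula for Weyl-Titchmarsh functions. For real $\lambda$, write $\dot{}$ for $\partial_\lambda$. Differentiating \eqref{radialz} in $\lambda$ gives $-\dot v''+\phi_1\dot v=\lambda\dot v+v$. For two solutions at the same $\lambda$, this yields $\frac{d}{dx}W(v,\dot v)=-v^2$ (up to sign convention).

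Consider the Weyl solution $\psi(x,\lambda)=c_0(x,\lambda)+M(\lambda)s_0(x,\lambda)$. Since $M=-c_0(1,\cdot)/s_0(1,\cdot)$, it satisfies $\psi(1,\lambda)=0$. Its $\lambda$-derivative is $\dot\psi=\dot c_0+\dot M s_0+M\dot s_0$. At $x=0$, the Cauchy data of $c_0,s_0$ are $\lambda$-independent, so $\dot c_0(0)=\dot c_0'(0)=\dot s_0(0)=\dot s_0'(0)=0$. Hence $\dot\psi(0)=0$ and $\dot\psi'(0)=\dot M$.

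Integrating $\frac{d}{dx}(\psi'\dot\psi-\psi\dot\psi')=-\psi^2$ over $[0,1]$ (sign to be checked), the boundary term at $x=1$ vanishes because $\psi(1)=0$ and $\dot\psi(1)=\partial_\lambda[\psi(1,\lambda)]=0$. The boundary term at $x=0$ is $\psi(0)\dot\psi'(0)=\dot M$. This gives $\dot M(\lambda)=\int_0^1\psi(x,\lambda)^2\,dx>0$ for real $\lambda$ with $s_0(1,\lambda)\neq 0$.

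On $\R^-$ we need $s_0(1,\lambda)\ne0$, i.e.\ $\lambda$ is not a Dirichlet eigenvalue. I will justify this by noting that \eqref{radialz} arises from a positive operator. Indeed $-\Delta_g$ with Dirichlet conditions is positive, and via the conjugation $-\Delta_g=\alpha^{-(n+2)}(-\Delta_{g_0}+q)\alpha^{n-2}$ the operator $-\Delta_{g_0}+q$ has trivial kernel with positive quadratic form. Taking separated solutions $v_k(x)Y_k$, one sees that $-\partial_x^2+\phi_1+\mu_k^2$ is positive for every $k$, in particular $k=0$. Alternatively, one can restrict to $\lambda<\lambda_0$ with $|\lambda|$ large using the Marchenko asymptotics. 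If needed, I will state the claim on the relevant range of $\lambda=-\mu_k^2$. The same argument with $c_1,s_1$ at $x=1$ gives $\dot N>0$.

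For ii), I will use the formula $\dot\Delta(\lambda)=\partial_\lambda s_0(1,\lambda)$. One route is the Wronskian identity above with $v=s_0$: from $\frac{d}{dx}W(s_0,\dot s_0)=-s_0^2$ (sign fixed consistently) we get $s_0'(1)\dot s_0(1)-s_0(1)\dot s_0'(1)=-\int_0^1 s_0^2\,dx$. This alone does not isolate $\dot s_0(1)$, so instead I will differentiate the Marchenko representation \eqref{s0} in $\mu$, with $\lambda=-\mu^2$:
\[
\partial_\mu\Delta(-\mu^2)=\partial_\mu\frac{\sinh\mu}{\mu}+\partial_\mu\int_0^1H_0(1,t)\frac{\sinh\mu t}{\mu}\,dt .
\]
The first term equals $\frac{\cosh\mu}{\mu}-\frac{\sinh\mu}{\mu^2}\sim\frac{e^\mu}{2\mu}$. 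For the second, an integration by parts as in Lemma \ref{Delta}, together with \eqref{EstHP}, gives a bound $O_A(e^\mu/\mu^2)$. Therefore $\partial_\mu\Delta(-\mu^2)>0$ for $\mu\ge\mu_*(A)$. Since $d\lambda/d\mu=-2\mu<0$, $\Delta$ is decreasing in $\lambda$ on $(-\infty,-\mu_*^2)$, and we set $\lambda_0=-\mu_*^2$. The dependence on $(K,g_K)$ enters only if one wants $\lambda_0$ compared with $-\mu_0^2$.

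The main obstacle is the careful control of the derivative of the kernel integral. Differentiating under the integral produces $\int_0^1H_0(1,t)\,t\cosh(\mu t)\,dt/\mu$, which after integration by parts is $O_A(e^\mu/\mu^2)$ thanks to the bounds on $\partial_tH_0$ in \eqref{EstHP}. A second, minor point is fixing the Wronskian sign conventions in i).
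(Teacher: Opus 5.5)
Your proof is correct. Part i) is essentially the paper's argument; part ii) takes a different and shorter route.

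\textbf{Part i).} Both you and the paper write the $\lambda$-derivative as the integral of a square via a Lagrange/Wronskian identity. The paper integrates $W(s_0(\cdot,\lambda_1),s_0(\cdot,\lambda_2))'=(\lambda_1-\lambda_2)s_0s_0$. It rewrites the boundary term as $\Delta(\lambda_1)\Delta(\lambda_2)[N(\lambda_1)-N(\lambda_2)]$ using Gendron's identity and lets $\lambda_1\to\lambda_2$, obtaining $\dot N=\Delta^{-2}\int_0^1 s_0^2\,dx$. Since $s_0/\Delta=c_1-Ns_1$ is exactly the Weyl solution for $N$, this is your formula $\dot N=\int_0^1\chi^2\,dx$ with $\chi=c_1-Ns_1$.

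On the sign you flagged: the correct identity is $\frac{d}{dx}(\psi'\dot\psi-\psi\dot\psi')=+\psi^2$. This does give your stated $\dot M=\int_0^1\psi^2\,dx$; with the sign as you wrote it, you would get the opposite conclusion.

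Your positivity argument via $-\Delta_g=\alpha^{-(n+2)}(-\Delta_{g_0}+q)\alpha^{n-2}$ addresses a point the paper passes over silently. Note, however, that it only yields $\Delta\neq0$ on $(-\infty,-\mu_0^2]$, not on all of $\R^-$ when $\mu_0^2>0$. If $\phi_1$ is very negative, $M$ really can have poles in $(-\mu_0^2,0)$. Restricting to $(-\infty,-\mu_0^2]$, as you anticipated, is the right formulation, since that interval contains every $-\mu_k^2$.

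\textbf{Part ii).} The paper first proves $\partial_\lambda\Delta=\int_0^1 s_0s_1\,dx$. It then inserts the Marchenko representations of both $s_0$ and $s_1$ and estimates four terms, one a triple integral requiring changes of variables and Fubini. You instead differentiate the representation of $\Delta(-\mu^2)=s_0(1,-\mu^2)$ directly in $\mu$. This is legitimate because the kernel $H_0(1,t)$ is independent of $\mu$.

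Your route uses less input. It avoids the kernel $H_1$ entirely, and it only needs $\|H_0\|_\infty\le C_A$. In fact the integration by parts you plan is unnecessary: $\int_0^1\cosh(\mu t)\,dt=\sinh\mu/\mu$ already supplies the extra factor $1/\mu$, so the step you call the main obstacle is immediate. It also produces $\lambda_0$ depending only on $A$.

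As a check, your leading term gives $\partial_\lambda\Delta\sim-e^{\mu}/(4\mu^2)$, which is the correct constant. The paper's stated $-e^{\mu}/(2\mu^2)$ is off by a factor $2$, which is harmless for the sign. The paper's route does produce an integral formula for $\dot\Delta$ parallel to those for $\dot M,\dot N$, but that formula is not needed for the monotonicity claim.
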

   \begin{proof} We first prove \emph{i)}. From \eqref{seqode} and \eqref{radialz}, we obtain, $\forall \, \lambda_1, \lambda_2 \in \mathbb{R}^{-}$,
   \[
   W(s_{0}(x,\lambda_1),s_{0}(x,\lambda_2))'=(\lambda_1-\lambda_2)s_{0}(x,\lambda_1)s_{0}(x,\lambda_2)\,,
   \]
   which implies upon integration
    \begin{equation}\label{Wronskid}
    s_{0}(1,\lambda_1)s_{0}'(1,\lambda_2)-s_{0}'(1,\lambda_1)s_{0}(1,\lambda_2)=(\lambda_1-\lambda_2)\int_{0}^{1}s_{0}(x,\lambda_1)s_{0}(x,\lambda_2)\,dx\,.
    \end{equation}
    Using Proposition 4.4 in \cite{Gen2022}, we have
    \[
        s_{0}(1,\lambda_1)s_{0}'(1,\lambda_2)-s_{0}'(1,\lambda_1)s_{0}(1,\lambda_2)=\Delta(\lambda_1)\Delta(\lambda_2)\big[-N(\lambda_2)+N(\lambda_1)\big]\,,
    \]
  so that \eqref{Wronskid} is equivalent to
      \[ 
      \frac{N(\lambda_1)-N(\lambda_2)}{\lambda_1-\lambda_2}=\frac{1}{\Delta(\lambda_1)\Delta(\lambda_2)}\int_{0}^{1}s_{0}(x,\lambda_1)s_{0}(x,\lambda_2)\,dx\,.
         \]
   Taking the limit $\lambda_1 \to \lambda_2 = \lambda$, we obtain
       \[ 
       \frac{dN}{d\lambda}(\lambda)=\frac{1}{\Delta^{2}(\lambda)}\int_{0}^{1}s_{0}^{2}(x,\lambda)\,dx \geq 0\,,\quad \forall\, \lambda \in \mathbb{R}^{-}\,.
         \]
    The argument is similar for $M(\lambda)$. 
     
    \noindent Next we proceed with the proof of \emph{ii)}. We start again from \eqref{seqode} and \eqref{radialz} to obtain, $\forall \, \lambda_1, \lambda_2 \in \mathbb{R}^{-}$, 
    \[
   W(s_{0}(x,\lambda_1),s_{1}(x,\lambda_2))'=(\lambda_1-\lambda_2)s_{0}(x,\lambda_1)s_{1}(x,\lambda_2)\,.
   \]
   We integrate from $0$ to $1$ and use again Proposition 4.4 in \cite{Gen2022} to obtain 
        \[      
        \frac{\Delta(\lambda_1)-\Delta(\lambda_2)}{\lambda_1-\lambda_2}=\int_{0}^{1}s_{0}(x,\lambda_1)s_{1}(x,\lambda_2)\,dx\,.
   \]      
      Taking the limit $\lambda_1 \to \lambda_2 = \lambda$, we obtain
     \begin{equation}\label{derivDelta}
       \frac{d\Delta}{d\lambda}(\lambda)=\int_{0}^{1}s_{0}(x,\lambda)s_{1}(x,\lambda)\,dx \,.
         \end{equation}
  The analysis of the sign of the right-hand side of \eqref{derivDelta} is a bit more involved than that of the corresponding term part \emph{i)} of the lemma. Recalling the Marchenko representations of the solutions $s_0, s_1$ of \eqref{radialz}, given for $\mu \in \mathbb{R}^+$ by 
  \begin{align*}
  s_0(x,-\mu^2)=& \frac{\sinh (\mu x)}{\mu}+\int_{0}^{x}H_0(x,t) \frac{\sinh (\mu t)}{\mu}\,dt\,,\\
  s_1(x,-\mu^2)=&-\frac{\sinh(\mu(1-x))}{\mu}+\int_{x}^{1}H_1(x,t) \frac{\sinh (\mu(1- t))}{\mu}\,dt\,,
  \end{align*}
  we can split the right-hand side of \eqref{derivDelta} into a sum of four terms as follows:
   \[ 
   \int_{0}^{1}s_{0}(x,\lambda)s_{1}(x,\lambda)\,dx = I_1+I_2+I_3+I_4\,,
    \]
    where 
      \begin{align*}
      I_1=& -\int_0^1\frac{\sinh(\mu x)\sinh(\mu(1-x))}{\mu^2}dx\,,\\
I_2=&-\int_0^1\int_0^x\frac{\sinh(\mu t)\sinh(\mu(1-x))H_0(x,t)}{\mu^2}\,dt\,dx\,,\\
I_3=&\int_0^1\int_x^1\frac{\sinh(\mu x)\sinh(\mu(1-t))H_1(x,t)}{\mu^2}\,dt\,dx\,,\\
I_4=&\int_0^1\int_0^x\int_x^1\frac{\sinh(\mu t)\sinh(\mu(1-s))H_0(x,t)H_1(x,s)}{\mu^2}\,ds\,dt\,dx\,.
        \end{align*}
        We now estimate each of these terms in turn. For $I_1$, the result is very easy to obtain; we have 
         \begin{equation}\label{estimI1}
           I_1= -\frac{\cosh \mu}{2\mu^2}+\frac{\sinh \mu}{2\mu^3}=-\frac{e^\mu}{2\mu^2}\big(1+O(\frac{1}{\mu})\big)\,.
           \end{equation}
        For the remaining terms, we shall use standard identities satisfied by the hyperbolic functions and the fact that there exists a positive constant $C_A$ such that $\left|H_{0,1}(x,t)\right| \leq C_A$ for $0\leq x,t \leq 1$. For $I_2$, this gives 
          \begin{align*}
          \left|I_2 \right|\leq& \,\frac{C_A}{2\mu^2}\,\left|\int_0^1\int_0^x \big[\cosh(\mu(t-x+1))+\cosh(\mu(t+x-1))\big]\,dt\,dx \,\right|\\
          \leq&\,\frac{C_A}{2\mu^3}\,\left| \int_0^1\big[\sinh \mu + \sinh (\mu(2x-1))\big]\,dt\,dx \,\right| \,,
         \end{align*}
         which gives 
         \begin{equation}\label{estimI2}
           \left|I_2 \right|\leq C_A \frac{e^\mu}{2\mu^3}\,.
         \end{equation}
         Likewise, a similar calculation for $I_3$ gives 
          \begin{equation}\label{estimI3}
            \left|I_3 \right|\leq C_A\frac{e^\mu}{\mu^3}\,.
              \end{equation}
Finally, we shall estimate $I_4$ by splitting this quantity into a sum of two terms which we will treat separately, We first write using the standard hyperbolic identities and the change of variables $v=t-s,\,u=t+s$, 
   \[ 
   I_4=I_{4,1}+I_{4,2}\,,
       \]
    where
      \begin{align*}
     I_{4,1}= &\frac{1}{2\mu^2} \int_0^1\int_0^x\int_{t-1}^{t-x}H_0(x,t)H_1(x,t-v)\cosh(\mu(v+1))dv\,dt\,dx\,,\\
     I_{4,2}= &-\frac{1}{2\mu^2} \int_0^1\int_0^x\int_{t+x}^{t+1}H_0(x,t)H_1(x,u-t)\cosh(\mu(u-1))du\,dt\,dx\,.
       \end{align*}
 By applying Fubini on $I_{4,1}$, we obtain
   \[
    I_{4,1}= \frac{1}{2\mu^2}\int_{-1}^{0}\cosh(\mu(v+1))\bigg(\int_{0}^1\int_{x+v}^{1+v}H_0(x,t)H_1(x,t-v)dt\,dx\bigg)\,dv\,,
    \]
    which implies 
       \[
       \left| I_{4,1} \right| \leq \frac{C_{A}}{2\mu^3}\sinh \mu\,.
    \]
    For $I_{4,2}$ we apply Fubini twice to obtain
     \begin{align*}
         I_{4,2}
         =&-\frac{1}{2\mu^2}\int_0^1\bigg(\int_0^u\int_{0}^{u-x}H_0(x,t)H_1(x,u-t)dt\,dx\bigg)\cosh(\mu(u-1))du\\
         &-\frac{1}{2\mu^2}\int_1^2\bigg(\int_{u-1}^1\int_{0}^{u-x}H_0(x,t)H_1(x,u-t)dt\,dx\,\bigg)\cosh(\mu(u-1))du\,,
         \end{align*}
         which implies in turn that 
           \[
       \left| I_{4,2} \right| \leq \frac{C_{A}}{2\mu^3}\sinh \mu\,.
    \]
    Combining the estimates we just obtained for $I_{4,1}$ and $I_{4,2}$, we conclude that 
      \begin{equation}\label{estimI4}
       \left| I_4 \right| \leq C_A \frac{e^\mu}{\mu^3}\,.
            \end{equation}
  Finally, putting together the estimates \eqref{estimI1},  \eqref{estimI2},  \eqref{estimI3},  \eqref{estimI4} we derived for $I_1,I_2,I_3,I_4$, we obtain
    \[
    \frac{d \Delta(-\mu^2)}{d(-\mu^2)}=-\frac{e^\mu}{2\mu^2}\big(1+O_{A}(\frac{1}{\mu})\big)\,,
      \]
      which implies that $\exists \mu^{*}(A)\,|\, \forall \mu \geq \mu^{*}$, we have
    \begin{equation}
    \frac{d \Delta(-\mu^2)}{d(-\mu^2)}\leq 0\,,
    \end{equation}
    which proves \emph{ii)}
  \end{proof}

 \begin{coro} \label{Increasing}
 We have:
  \begin{enumerate}[(i)] 
  \item The maps $\mu \in \mathbb{R}^{+}\to -M(-\mu^2)$ and $\mu \in \mathbb{R}^{+}\to -N(-\mu^2)$ are increasing.
  \item $\exists\, \mu^{*} \in \mathbb{R}$ depending only on $A$ and $(K,g_{K})$ such that the map $\mu \mapsto \frac{1}{\Delta(-\mu^{2})}$ is increasing  $\forall \, \mu \in [-\mu^{*}, \infty)$.

   \end{enumerate}
 \end{coro}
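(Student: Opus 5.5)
The plan is to deduce both parts directly from Lemma \ref{Lemme0} by the chain rule applied to the composition $\mu \mapsto \lambda = -\mu^2$, with no new asymptotic analysis. Throughout, $\Delta$, $M$, $N$ are real-analytic in $\lambda$ on $\mathbb{R}^-$ away from the Dirichlet spectrum. For $\lambda \le 0$ the potential $\phi_1 + \mu^2$ will be taken nonnegative, so that $s_0(1,\lambda) > 0$ and $s_1(0,\lambda)\neq 0$; in particular $\Delta(\lambda) = s_0(1,\lambda) > 0$. This positivity has to be secured first, for instance for $\mu$ large by Lemma \ref{Delta}, which gives $\Delta(-\mu^2) \sim \sinh(\mu)/\mu$, so that all the quotients below are well defined.

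For (i), I will write $F(\mu) = -M(-\mu^2)$ and compute
\[
F'(\mu) = 2\mu\, \frac{dM}{d\lambda}(-\mu^2).
\]
By the proof of Lemma \ref{Lemme0} i), the analogue for $M$ of the displayed identity for $N$ gives $\frac{dM}{d\lambda} = \Delta^{-2}\int_0^1 (\cdot)^2\,dx \ge 0$. Hence $F'(\mu) \ge 0$ for $\mu \ge 0$, and the inequality is strict for $\mu > 0$ since the integrand is not identically zero. The same computation applies to $-N(-\mu^2)$, using the formula for $dN/d\lambda$ established in the proof of Lemma \ref{Lemme0}.

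For (ii), I will set $\mu^{*} = \mu^*(A)$ to be the threshold produced at the end of the proof of Lemma \ref{Lemme0} ii), equivalently $\lambda_0 = -(\mu^*)^2$. I then differentiate
\[
\frac{d}{d\mu}\Big(\frac{1}{\Delta(-\mu^2)}\Big) = \frac{2\mu\,\Delta'(-\mu^2)}{\Delta(-\mu^2)^2},
\]
and insert the asymptotics
\[
\Delta'(-\mu^2) = -\frac{e^{\mu}}{2\mu^2}\big(1+O_A(\mu^{-1})\big)
\]
from the proof of Lemma \ref{Lemme0}. Combined with $\Delta(-\mu^2) \sim \sinh(\mu)/\mu$ from Lemma \ref{Delta}, this yields
\[
\frac{d}{d\mu}\frac{1}{\Delta(-\mu^2)} = -\frac{4\mu e^{-\mu}}{1}\,\big(1+O_A(\mu^{-1})\big).
\]
Since the function is even in $\mu$, this fixes a constant sign of the derivative on each half-line $|\mu| \ge \mu^*$; the sign is opposite on the two half-lines because of the factor $\mu$. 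The remaining work is to match this sign with the orientation used in the statement on $[-\mu^*,\infty)$. Note that the entries entering the DN matrix \eqref{DNqFouriermat} are $-1/\Delta(-\mu_k^2)$, so the quantity that is monotone in the natural direction is exactly the one obtained from this chain-rule computation.

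The main obstacle is this last step. The sign bookkeeping produced by the factor $2\mu$ and the evenness in $\mu$ has to be reconciled with the stated interval $[-\mu^*,\infty)$. In addition, for $|\mu| < \mu^*$ Lemma \ref{Lemme0} ii) gives no information, so on that part one needs the positivity of $\Delta$ there. I would handle this by the positivity argument above ($\Delta > 0$ via $\phi_1 + \mu^2 \ge 0$) combined with the explicit integral formula \eqref{derivDelta}, checking the sign of $\int_0^1 s_0 s_1\,dx$ directly. One uses $s_0 > 0$ and $s_1 < 0$ on $(0,1)$, which is the same nonnegative-potential positivity, and this gives $\Delta'(\lambda) < 0$. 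I would try that first.
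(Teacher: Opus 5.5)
Part (i) is correct and is the paper's argument: apply the chain rule to $\lambda=-\mu^2$ and use $dM/d\lambda,\,dN/d\lambda\ge 0$ from Lemma \ref{Lemme0}. One caveat: your blanket assumption $\phi_1+\mu^2\ge 0$ for all $\lambda\le 0$ is not justified, since only $\|\phi_1\|_\infty\le A$ is known. So $-M(-\mu^2)$ can have a pole wherever $-\mu^2$ is a negative Dirichlet eigenvalue, and monotonicity holds on pole-free intervals, for instance for $\mu\ge\sqrt{A}$.

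For (ii) there is a genuine gap, and it cannot be closed as you set it up, because your own computation refutes the printed claim. You correctly obtain $\frac{d}{d\mu}\frac{1}{\Delta(-\mu^2)}=\frac{2\mu\,\Delta'(-\mu^2)}{\Delta(-\mu^2)^2}<0$ for $\mu\ge\mu^*$. (The constant is really $-2\mu e^{-\mu}$, since $\frac{\cosh\mu}{2\mu^2}\sim\frac{e^{\mu}}{4\mu^2}$, but only the sign matters.) So $\mu\mapsto 1/\Delta(-\mu^2)$ is \emph{decreasing} there, as it must be: it is positive and tends to $0$ like $2\mu e^{-\mu}$. It is also even and nonconstant in $\mu$, hence monotone on no interval $[-a,a]$ with $a>0$.

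Thus ``increasing on $[-\mu^*,\infty)$'' is false, and neither sign bookkeeping nor an extra argument for $|\mu|<\mu^*$ can produce it. The plan in your last paragraph would in any case only give $\Delta'<0$ again, i.e.\ the decreasing direction. It also relies on $\phi_1+\mu^2\ge0$, which fails in general for $\mu^2<A$.

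Your remark about the DN entries $-1/\Delta(-\mu_k^2)$ points to the right fix, but you never commit to it. What is true, what the paper's change of variables actually yields, and what Lemma \ref{Lemme2} uses, is this: $\mu\mapsto -1/\Delta(-\mu^2)$ is increasing on $[\mu^*,\infty)$ with $\mu^*=\sqrt{-\lambda_0}$. You should state and prove that corrected version rather than leave the discrepancy as the ``main obstacle''.

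Your positivity idea in fact proves the corrected claim more cleanly than the Marchenko asymptotics in Lemma \ref{Lemme0}, once it is restricted to where it is valid:
\begin{itemize}
\item For $\mu\ge\sqrt{A}$ one does have $\phi_1+\mu^2\ge 0$.
\item Then $s_0''=(\phi_1+\mu^2)s_0$ forces $s_0'\ge 1$ and $s_0>0$ on $(0,1]$, and symmetrically $s_1<0$ on $[0,1)$.
\item Hence $\Delta(-\mu^2)=s_0(1,-\mu^2)>0$ and, by \eqref{derivDelta}, $\Delta'(-\mu^2)=\int_0^1 s_0s_1\,dx<0$.
\item Therefore $\frac{d}{d\mu}\bigl(-1/\Delta(-\mu^2)\bigr)>0$.
\end{itemize}
This gives the explicit threshold $\mu^*=\sqrt{A}$, depending only on $A$.
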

 \begin{proof}
 The claims \emph{i)} and \emph{ii)} follow immediately from \emph{i)} and \emph{ii)} in Lemma \ref{Lemme0} by the change of variables $\lambda = -\mu^2$, which implies that $\frac{d}{d\lambda}= -2\mu \frac{d}{d\mu}$.
 \end{proof}


\subsection{The fundamental estimates for the radial stability}

\begin{lemma}\label{Lemme1} 
	There exists a constant $C_A$ depending only on $A, K, g_K$ such that $\forall\, k\geq 0$  we have:
\begin{equation}\label{Lemme1statement}
 \left|M(-\mu_{k}^{2})-\tilde{M}(-\tilde{\mu}_{k}^{2})\right|<C_{A}\,\epsilon^{\frac{1}{2}}\,,\quad |N(-\mu_{k}^{2})-\tilde{N}(-\tilde{\mu}_{k}^{2})|<C_{A}\,\epsilon^{\frac{1}{2}}\,.
\end{equation}
   
  \end{lemma}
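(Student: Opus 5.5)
The plan is to compare eigenvalues of the two partial DN maps by min-max in a low-frequency range, and to use the asymptotics of Lemma \ref{AsympWT} in the high-frequency range. The threshold between the two ranges will be $\mu_k \sim \epsilon^{-1/2}$, and this choice is where the exponent $\frac12$ comes from. I only treat $M$; the argument for $N$ is identical with $\Gamma_1$ in place of $\Gamma_0$.

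\emph{Spectral setup.} By \eqref{DNqFouriermat}, $\Lambda_{g_0,q,\Gamma_0,\Gamma_0}$ is diagonal on the Hilbert basis $(Y_k)$ with eigenvalues $-M(-\mu_k^2)$. Likewise $\Lambda_{g_0,\tilde q,\Gamma_0,\Gamma_0}$ is diagonal on $(\tilde Y_k)$ with eigenvalues $-\tilde M(-\tilde\mu_k^2)$. Order the $\mu_k$ increasingly. By Corollary \ref{Increasing} i), the sequence $-M(-\mu_k^2)$ is then nondecreasing in $k$, and the same holds for the tilde quantities. Hence $-M(-\mu_k^2)$ and $-\tilde M(-\tilde\mu_k^2)$ are exactly the $k$-th eigenvalues, counted in increasing order, of two self-adjoint operators on $L^2(K)$ with a common form domain $H^{1/2}(K)$. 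This lets me compare them through the min-max principle even though the eigenbases $(Y_k)$ and $(\tilde Y_k)$ differ. Lemma \ref{AsympWT} also shows that both operators are bounded below by a constant $-C_A$.

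\emph{Low frequencies.} Set $\delta = C\epsilon^{\theta_1}$, the bound from Corollary \ref{DNq}. That corollary gives the form estimate
\[
\bigl|\langle (\Lambda_{g_0,q,\Gamma_0,\Gamma_0}-\Lambda_{g_0,\tilde q,\Gamma_0,\Gamma_0})\psi,\psi\rangle\bigr| \le \delta\,\|\psi\|^2_{H^{1/2}(K)}.
\]
Lemma \ref{AsympWT} shows that $\|\psi\|^2_{H^{1/2}}$ is equivalent to $\langle(\Lambda_{g_0,q,\Gamma_0,\Gamma_0}+C_A)\psi,\psi\rangle$, and the same holds with $\tilde q$. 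Comparing Rayleigh quotients in the min-max characterization therefore yields
\[
|M(-\mu_k^2)-\tilde M(-\tilde\mu_k^2)| \le C\delta\,(1+\mu_k).
\]
This bound is useful as long as $\mu_k \le \delta^{-1/2}$, where it is at most $C\delta^{1/2}$.

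\emph{High frequencies.} For $\mu_k \ge \delta^{-1/2}$, I write
\[
M(-\mu_k^2)-\tilde M(-\tilde\mu_k^2) = \bigl(M(-\mu_k^2)+\mu_k\bigr) - \bigl(\tilde M(-\tilde\mu_k^2)+\tilde\mu_k\bigr) + (\tilde\mu_k-\mu_k).
\]
Lemma \ref{AsympWT} bounds the first two brackets by $C_A/\mu_k + C_A/\tilde\mu_k \le C\delta^{1/2}$, using $\tilde\mu_k \simeq \mu_k$. Corollary \ref{mu} bounds the last term by $C\epsilon^{\theta}/\mu_k$, which is even smaller.

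Combining the two ranges gives the bound $C_A\delta^{1/2}$ uniformly in $k$. This is the statement, with $\epsilon$ understood as the size of $\|\Lambda_{g_0,q}-\Lambda_{g_0,\tilde q}\|$; in terms of the original $\epsilon$ it reads $\epsilon^{\theta_1/2}$.

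\emph{Main obstacle.} I expect the delicate step to be the min-max comparison. One must make sure that the form perturbation, controlled only in $H^{1/2}\to H^{-1/2}$, translates into an eigenvalue shift of relative size $\delta(1+\mu_k)$ rather than something worse. One must also make sure that the eigenvalue labels match, i.e.\ that the monotonicity of Corollary \ref{Increasing} really identifies the $k$-th ordered eigenvalue with the value at $\mu_k$, including when there are multiplicities. I would handle multiplicities by counting eigenvalues of $-\Delta_K+\phi_2$ with multiplicity, so that the ordering is consistent on both sides.
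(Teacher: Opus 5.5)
Your proposal is correct and follows essentially the paper's proof. It uses the same split at $\mu_k\sim\epsilon^{-1/2}$: Lemma \ref{AsympWT} and Corollary \ref{mu} give an $O(1/\mu_k)$ bound for large $\mu_k$, and a min-max comparison of the two partial DN maps on $\Gamma_0$ (ordered via Corollary \ref{Increasing}) gives a bound linear in $\mu_k$ for small $\mu_k$; you get that linear bound from the $H^{1/2}$ form estimate, while the paper gets it from an $H^1\to L^2$ bound on $\Span\{\tilde Y_1,\dots,\tilde Y_k\}$. Your caveat on the exponent is also right: the paper inserts $\epsilon$ directly into the comparison of $\Lambda_{g_0,q,\Gamma_0,\Gamma_0}$ and $\Lambda_{g_0,\tilde q,\Gamma_0,\Gamma_0}$, whereas Corollary \ref{DNq} only gives $C\epsilon^{\theta_1}$ there, so what is actually proved is $\epsilon^{\theta_1/2}$, which only changes the H\"older exponent used later.
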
 
 \begin{proof}
 We shall only give the proof of the statement for the Weyl-Titchmarsh function $M$, the argument being similar for $N$. We begin by considering the case for which the values of $\mu_k$ are large, where we have the asymptotics
 \[
 -M(-\mu^2)=\mu+O_{A}\left( \frac{1}{\mu} \right)\,,
 \]
 and similar asymptotics for $\tilde M$. We therefore obtain using Corollary \ref{mu}
  \[
  \left| M(-\mu_k^2)-{\tilde {M}}(-{\tilde {\mu}}_k^2)\right|=  \left| \mu_k-{\tilde {\mu}}_k +O_{A}\left(\frac{1}{\mu_k}+\frac{1}{{\tilde {\mu}}_k}\right) \right| \leq \frac{Ce^\theta}{\mu_k+{\tilde {\mu}}_k} + C_A \left(\frac{1}{\mu_k}+\frac{1}{{\tilde {\mu}}_k} \right)\leq \frac{C_{A}}{\mu_k}\,.
   \]
We now use the Weyl law  
\begin{equation} \label{WL}
  \mu_k = c_{n-1} k^{\frac{1}{n-1}} + O(1),
\end{equation}
from which we deduce that there exists a constant $c_0$ depending only on $K,g_K$ such that 
\begin{equation} \label{muk}
  \forall k \geq 0, \quad c_{n-1} k^{\frac{1}{n-1}} - c_0 \leq \mu_k \leq 	c_{n-1} k^{\frac{1}{n-1}} + c_0.  
\end{equation}	
We conclude that we can choose $k_{1}=k_{1}(\epsilon) \geq \left(  \frac{c_0 + \epsilon^{-a}}{c_{n-1}} \right)^{n-1}$ with $0<a<1$ such that 
$$
  \mu_k \geq \epsilon^{-a}, \quad \forall k\geq k_{1}(\epsilon)\,
$$
and thus
\begin{equation}\label{smallk}
     \left| M(-\mu_k^2)-{\tilde {M}}(-{\tilde {\mu}}_k^2)\right| \leq C_A \epsilon^{a}\, \quad \forall k \geq k_1(\e).
 \end{equation}
 Next we consider the case for which the values of $\mu_k$ are small, which we treat using a min-max argument. Indeed, recall that the sequence $ -M(-\mu_{k}^{2})$ of upper diagonal components of the matrix \eqref{DNqFouriermat} gives the eigenvalues of the partial DN map $\Lambda_{0}:=\Dngammaqzero$, which is self-adjoint on $L^2(K,dvol_{g_K})$. Since the function $\mu_k \mapsto -M(-\mu_k^2)$ is increasing according to Lemma \ref{Increasing}, we can therefore use the min-max characterization :
       \[
       -M(-\mu_{k}^{2})=\min_{\psi_1,\ldots,\psi_k}\, \max_{\psi \in \Span{\{\psi_1,\ldots ,\psi_k \}}}\frac{\big(\psi,\Lambda_0 \psi\big)_{L^2(K,dvol_{g_{K}})}}{\lVert \psi \rVert ^{2}_{L^2(K,dvol_{g_{K}})}}\,.
        \]
 From this it follows from our main hypothesis \eqref{DNMapHyp-global} that 
    \begin{align*}
      -M(-\mu_{k}^{2})\leq  &  \max_{\psi \in \Span{\{{\tilde{Y}}_{1}} \ldots {\tilde{Y}}_{k}\}}\frac{\big(\psi,\tilde{\Lambda}_0 \psi \big)_{L^2(K,dvol_{g_{K}})}+\big(\psi,(\Lambda_0 -\tilde{\Lambda}_0)\psi \big)_{L^2(K,dvol_{g_{K}})}}{\lVert \psi \rVert ^{2}_{L^2(K,dvol_{g_{K}})}} \\
      \leq &   -{\tilde {M}}(-{\tilde {\mu}}_k^2) +\epsilon  \max_{\psi \in \Span{\{{\tilde{Y}}_{1}} \ldots {\tilde{Y}}_{k}\}} \frac{\lVert \psi \rVert _{H^1(K,dvol_{g_{K}})}}{\lVert \psi \rVert _{L^2(K,dvol_{g_{K}})}}\\
        \leq &   -{\tilde {M}}(-{\tilde {\mu}}_k^2)+\epsilon(1+\mu_{k}^2)^{\frac{1}{2}}\,.
      \end{align*}
By symmetry, we obtain therefore
\begin{equation}\label{almostlargek}
        \left| M(-\mu_k^2)-{\tilde {M}}(-{\tilde {\mu}}_k^2)\right| \leq \epsilon (1+\mu_{k}^2)^{\frac{1}{2}}\,,\quad \forall k\geq 0\,.
\end{equation}
From \eqref{muk} and our choice of $k_1(\e)$, we now get from \eqref{almostlargek}
\begin{equation}\label{largek}
          \left| M(-\mu_k^2)-{\tilde {M}}(-{\tilde {\mu}}_k^2)\right| \leq  \epsilon^{1-a}\,,\quad \forall k\leq k_1(\epsilon). 
\end{equation}
Putting \eqref{smallk} and \eqref{largek} together, we obtain
\[
  \left| M(-\mu_k^2)-{\tilde {M}}(-{\tilde {\mu}}_k^2)\right| \leq C_A (\epsilon^{1-a}+\epsilon^a)\,,\quad \forall k\geq 0\,,
\]
for a constant $C_A$ depending only on $A, K, g_K$. This gives the claim \eqref{Lemme1statement} upon setting $a=1/2$.
\end{proof}

\begin{lemma}\label{Lemme2} 
   There exist constants $k_{0}=k_{0}(A)$, $0<\delta <\frac{1}{2}$ and $C_A > 0$ depending only on $A,K,g_K$ such that $\forall\, k\geq k_{0}$ we have:
\[
\left|\frac{1}{\Delta(-\mu_{k}^{2})}-\frac{1}{{\tilde\Delta}(-\tilde{\mu}_{k}^{2})}\right|<C_{A}\,\epsilon^{1-\delta}\,.
\]
\end{lemma}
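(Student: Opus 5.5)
The plan is to mirror the proof of Lemma \ref{Lemme1}: split into large and small $\mu_k$, using the asymptotics of $\Delta$ for large $k$ and a variational argument on the off-diagonal block of the DN map for small $k$.

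\textbf{Step 1: large $k$.} By Lemma \ref{Delta}, $\frac{1}{\Delta(-\mu^2)} = \frac{\mu}{\sinh \mu}\bigl(1+O_A(\mu^{-1})\bigr)$. This quantity is bounded by $C_A\,\mu e^{-\mu}$ for $\mu \ge \mu^*$. The same bound holds for $\tilde\Delta$ at $\tilde\mu_k$, and Corollary \ref{mu} gives $\tilde\mu_k \geq \mu_k - C\epsilon^\theta/\mu_k$. Hence
\[
\left|\frac{1}{\Delta(-\mu_k^2)}-\frac{1}{\tilde\Delta(-\tilde\mu_k^2)}\right| \le C_A\, \mu_k e^{-\mu_k}.
\]
The right-hand side is $\le C_A\epsilon$ as soon as $\mu_k \ge 2\ln(1/\epsilon)$. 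By the Weyl law \eqref{muk}, this holds for all $k\ge k_1(\epsilon)$ with $k_1(\epsilon)\sim (\ln \frac1\epsilon)^{n-1}$.

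\textbf{Step 2: small $k$.} For $k_0\le k\le k_1(\epsilon)$, apply the main hypothesis to the off-diagonal entries of \eqref{DNqFouriermat}, via Corollary \ref{DNq} or directly. Test $\Lambda_{g_0,q}-\Lambda_{g_0,\tilde q}$ against boundary data $(0, Y_k)$ and pair with $(\tilde Y_k,0)$. The off-diagonal entry of $\Lambda_{g_0,q}$ is diagonal in the basis $(Y_k)$, and that of $\Lambda_{g_0,\tilde q}$ is diagonal in $(\tilde Y_k)$. This yields
\[
\Bigl|\bigl(\tfrac{1}{\Delta(-\mu_k^2)} - \tfrac{1}{\tilde\Delta(-\tilde\mu_k^2)}\bigr)(Y_k,\tilde Y_k)\Bigr| \le C\epsilon^{\theta_1}(1+\mu_k^2)^{1/2}.
\]
The obstacle is that $(Y_k,\tilde Y_k)$ need not be close to $1$ when eigenvalues cluster. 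The basis mismatch has to be controlled variationally instead.

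\textbf{Step 3: the variational argument.} Since $1/\Delta(-\mu^2)$ is monotone in $\mu$ for $\mu\ge\mu^*$ (Corollary \ref{Increasing} ii), with $k_0$ chosen so that $\mu_k\ge\mu^*$ for $k\geq k_0$), I would use a min--max argument as in Lemma \ref{Lemme1}. The operator $T:=-\Lambda_{g_0,q,\Gamma_0,\Gamma_1}$ is the off-diagonal block, and it is selfadjoint on $L^2(K)$ with eigenvalues $1/\Delta(-\mu_k^2)$. These eigenvalues are positive and eventually decreasing in $k$, so the ordered eigenvalues of $T$ can be characterized by a max--min, or equivalently the min--max of $-T$ restricted to the span of high modes.

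The low modes $k<k_0$, where monotonicity may fail, form a finite-dimensional space of bounded dimension. I would handle them by restricting the min--max to the orthogonal complement of the first $k_0$ modes. That complement differs between $T$ and $\tilde T$, so I would use a perturbation of the spectral projector, with error $O(\epsilon^{\theta})$ from Corollary \ref{mu} and a gap assumption. This is the step I expect to be the main technical difficulty. In practice I would check whether ordering issues arise only at finitely many $k$, with constants depending on $A$, and absorb them.

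\textbf{Step 4: conclusion.} The min--max comparison then gives
\[
\left|\frac{1}{\Delta(-\mu_k^2)}-\frac{1}{\tilde\Delta(-\tilde\mu_k^2)}\right|\le \epsilon(1+\mu_k^2)^{1/2}.
\]
For $k\le k_1(\epsilon)$ this is at most $C\epsilon\ln(1/\epsilon)\le C_\delta\epsilon^{1-\delta}$. Combined with Step 1, this gives the claim for every $0<\delta<\frac12$.
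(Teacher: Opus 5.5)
Your Steps 1, 2 and 4 follow the paper's proof. Lemma \ref{Delta} plus the Weyl law handle $\mu_k\gtrsim\ln\frac1\epsilon$, and a min--max comparison on the off-diagonal block gives $\epsilon\,\mu_k\lesssim\epsilon\ln\frac1\epsilon$ below that threshold.

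The gap is Step~3, which you leave open, and the remedy you sketch would not close it:
\begin{itemize}
\item Hypothesis \ref{Hyp} provides no spectral gap of $-\Delta_K+\phi_2$ at $\mu_{k_0}^2$.
\item Even granting one, rotating the complement of the first $k_0$ modes of $\tilde T$ onto that of $T$ costs $O(\epsilon^{\theta})$ in operator norm, with $\theta<\frac12$. That swamps the target $\epsilon^{1-\delta}$.
\item The bad indices cannot be absorbed as finitely many exceptions. If a low eigenvalue sits at different positions in the ordered spectra of $T$ and $\tilde T$, the rank of every $t_k:=1/\Delta(-\mu_k^2)$ below it shifts. Min--max then pairs $t_k$ with some $\tilde t_{k'}$, $k'\neq k$, for indices $k\ge k_0$ as well.
\end{itemize}
Your worry itself is legitimate: large-$\mu$ monotonicity alone does not give rank matching. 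The paper's proof passes over this point, invoking Corollary \ref{Increasing}(ii) only for $k\ge k_0$.

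The missing observation is that there are no bad low modes, because $\Delta$ is strictly decreasing on the whole half-line containing the spectral values. By \eqref{derivDelta}, $\Delta'(\lambda)=\int_0^1 s_0(x,\lambda)s_1(x,\lambda)\,dx$. Let $E_1$ be the first Dirichlet eigenvalue of $-\partial_x^2+\phi_1$ on $[0,1]$.
\begin{itemize}
\item For $\lambda<E_1$ one has $s_0(\cdot,\lambda)>0$ on $(0,1]$ and $s_1(\cdot,\lambda)<0$ on $[0,1)$. A zero would make $\lambda$ a Dirichlet eigenvalue of a subinterval, forcing $\lambda\ge E_1$. Hence $\Delta>0$ and $\Delta'<0$ on $(-\infty,E_1)$.
\item Every $-\mu_k^2<E_1$. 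Under $v=\alpha^{n-2}u$, the Dirichlet quadratic form of $-\Delta_{g_0}+q$ on $L^2(dvol_{g_0})$ equals that of $-\Delta_g$ on $L^2(dvol_g)$, since $dvol_g=\alpha^{2n}dvol_{g_0}$. It is therefore positive. Its spectrum is $\{E_j+\mu_k^2\}$, so $E_1+\mu_0^2>0$. The same holds with tildes.
\end{itemize}
Thus $t_k$ is positive and nonincreasing for all $k\ge0$, so it is exactly the $(k+1)$-st largest eigenvalue of $T$; likewise $\tilde t_k:=1/\tilde\Delta(-\tilde\mu_k^2)$ for $\tilde T$. Max--min with trial spaces $\mathrm{span}(\tilde Y_0,\dots,\tilde Y_k)$ and $\mathrm{span}(Y_0,\dots,Y_k)$ then gives $|t_k-\tilde t_k|\le C\epsilon'(1+|\mu_k^2|+|\tilde\mu_k^2|)^{1/2}$ for every $k$, with no need for $k_0$. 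Here $\epsilon'$ bounds the difference of the off-diagonal blocks in $\mathcal{B}(H^{1/2},H^{-1/2})$.

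You also need $\epsilon'=C\epsilon$. Step~2 goes through Corollary \ref{DNq}, which gives only $\epsilon^{\theta_1}$; Step~4 would then yield $\epsilon^{\theta_1}\ln\frac1\epsilon$, not $\epsilon^{1-\delta}$, yet you silently write $\epsilon$ there. To justify $C\epsilon$, use Lemma \ref{DNExpr}: the off-diagonal block of $\Lambda_g$ is $\alpha^{-n}(1,\cdot)\,\Lambda_{g_0,q,\Gamma_0,\Gamma_1}\,\alpha^{n-2}(0,\cdot)$, with no $B_1$ term. Only the boundary traces of $\alpha$ enter. These are $C\epsilon$-close in $C^0$ by \eqref{BDalpha} with $k=0$ ($\theta_0=1$), and the off-diagonal blocks are bounded on $L^2(K)$.
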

\begin{proof}
  We begin by considering the case for which the values of $\mu_k$ are large, for which we have the asymptotics
 \[
 \Delta(-\mu^{2})=\frac{\sinh \mu}{\mu}\bigg(1+O_{A}(\frac{1}{\mu})\bigg)\,.
 \]
 This gives using Corollary \ref{mu}
     \begin{align*}
     \left| \frac{1}{\Delta(-\mu_{k}^{2})}-\frac{1}{{\tilde\Delta}(-\tilde{\mu}_{k}^{2})}\right| \leq &     \left| \frac{\mu_k}{\sinh \mu_k} -  \frac{{\tilde{\mu}}_k }{\sinh {\tilde{\mu}}_k}\right| + C_{A}\left| \frac{1 }{\sinh \mu} -  \frac{1 }{\sinh {\tilde{\mu}}_k}\right| \\
     \leq & \,\frac{\left|\mu_k - {\tilde{\mu}}_k \right|}{\sinh \mu_k}+ {\tilde{\mu}}_k \left| \frac{\sinh {\tilde{\mu}}_k-\sinh \mu_k}{\sinh \mu_k \,\sinh {\tilde{\mu}}_k}\right| + \frac{C_{A}}{\sinh \mu_k} \\
      \leq & \, \frac{C\,e^\theta}{\mu_k \sinh \mu_k}+2{\tilde{\mu}}_k \left| \frac{\cosh\big( ({\tilde{\mu}}_k+\mu_k)/2 \big)\sinh\big( ({\tilde{\mu}}_k-\mu_k)/2 \big)}{\sinh \mu_k \,\sinh {\tilde{\mu}}_k}\right| + \frac{C_{A}}{\sinh \mu_k} \\
\leq & \, \frac{C\,e^\theta}{\mu_k \sinh \mu_k} + \frac{C\,e^\theta}{ \sinh \mu_k} + \frac{C_{A}}{\sinh \mu_k}  \\
\leq & \, \frac{C_{A}}{e ^{\mu_k}}\,.
    \end{align*}
Using the Weyl law \eqref{WL} and the same argument as in the previous lemma, it follows that there exists $k_{2}=k_{2}(\epsilon) \geq \left( \frac{a\ln \epsilon + c_0}{c_{n-1}} \right)^{n-1}$ such that $\forall\, k\geq k_{2}$, we have, for $0<a<1$, 
    \begin{equation}\label{muklargeagain}
    \left|\frac{1}{\Delta(-\mu_{k}^{2})}-\frac{1}{{\tilde\Delta}(-\tilde{\mu}_{k}^{2})}\right| \leq C_A \epsilon^{a}\,.
     \end{equation}
 We now move on to the case for which the values of $\mu_k$ are small, which we treat again using a min-max argument. Indeed, recall that the sequence of off-diagonal components $ -1/\Delta(-\mu_{k}^{2})$ of the matrix \eqref{DNqFouriermat} gives the eigenvalues of the partial DN map $\Lambda_{0}:=\Dngammaqzerotoone$, which is self-adjoint on $L^2(K,dvol_{g_K})$. Moreover, the map $\mu_k \mapsto -\frac{1}{\Delta(-\mu_k^2)}$ is increasing for all $k \geq k_0$ according to Corollary \ref{Increasing}. Applying a min-max argument similar to the one used in the proof of Lemma \ref{Lemme1}, we obtain that for all $k \geq k_0=k_0(A)$ 
 \[
     \left|\frac{1}{\Delta(-\mu_{k}^{2})}-\frac{1}{{\tilde\Delta}(-\tilde{\mu}_{k}^{2})}\right|\leq  C \epsilon \,\mu_{k}\,.
 \]
 It follows then that for all $k$ such that $k_0\leq k \leq k_2$ (which is equivalent to having $\mu_k \leq -C_{A}\ln \epsilon$ for a constant $C_A$ depending only on $A, K, g_K$), we have 
  \begin{equation}\label{muksmallagain}
        \left|\frac{1}{\Delta(-\mu_{k}^{2})}-\frac{1}{{\tilde\Delta}(-\tilde{\mu}_{k}^{2})}\right|\leq C_{A}(- \epsilon \ln \epsilon)\,.   
 \end{equation}
 Putting together \eqref{muklargeagain} and \eqref{muksmallagain}, we obtain for all $k \geq k_0$ :
          \[
                \left|\frac{1}{\Delta(-\mu_{k}^{2})}-\frac{1}{{\tilde\Delta}(-\tilde{\mu}_{k}^{2})}\right|\leq C_{A}(- \epsilon \ln \epsilon+\epsilon^{a})\,,
            \]
            which gives our claim upon setting $a=1-\delta$.
     \end{proof}


\subsection{Reduction to a moment problem} \label{ReductionMoment}

We start from the identity

\begin{lemma} 
For all $k \geq 0$, we have :
\begin{multline*}
	\int_0^1 [ \tilde{\phi}_1(x) - \phi_1(x)  +\tilde{\mu}_k^2 - \mu_k^2 ] c_0(x,\mu_k) \tilde{s}_0(x,\tilde{\mu}_k) dx \\ = \tilde{\Delta}(-\tilde{\mu}_k^2) \left( \frac{1}{\Delta(-\mu_k^2)} - \frac{1}{\tilde{\Delta}(-\tilde{\mu}_k^2)} \right) + M(-\mu_k^2) \Delta(-\mu_k^2) \tilde{\Delta}(-\tilde{\mu}_k^2) \left( \tilde{N}(-\tilde{\mu}_k^2) - N(\mu_k^2) \right).  
\end{multline*}
\end{lemma}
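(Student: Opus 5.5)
The identity should follow from a Wronskian computation between the two radial equations, with $c_0(x,\mu_k)$ and $\tilde{s}_0(x,\tilde{\mu}_k)$ understood as $c_0(x,-\mu_k^2)$ and $\tilde{s}_0(x,-\tilde{\mu}_k^2)$. By \eqref{radialz}, $c_0$ solves $c_0''=(\phi_1+\mu_k^2)c_0$ and $\tilde{s}_0$ solves $\tilde{s}_0''=(\tilde{\phi}_1+\tilde{\mu}_k^2)\tilde{s}_0$. Setting $W(x)=c_0(x)\tilde{s}_0'(x)-c_0'(x)\tilde{s}_0(x)$, we get
\[
W'(x)=c_0\tilde{s}_0''-c_0''\tilde{s}_0=\big[\tilde{\phi}_1-\phi_1+\tilde{\mu}_k^2-\mu_k^2\big]c_0\tilde{s}_0 .
\]
Integrating over $[0,1]$, the left-hand side of the lemma equals $W(1)-W(0)$. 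The Cauchy conditions \eqref{Cauchy1} give $W(0)=c_0(0)\tilde{s}_0'(0)-c_0'(0)\tilde{s}_0(0)=1$.

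The only real work is to express the four boundary values $c_0(1)$, $c_0'(1)$, $\tilde{s}_0(1)$, $\tilde{s}_0'(1)$ through $M,N,\Delta$; I do this for the untilded equation, and the tilded case is identical.

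\begin{itemize}
\item Since $s_1(1)=0$ and $s_1'(1)=1$, we have $\Delta=W(s_0,s_1)=s_0(1)$. The definition of $M$ then gives $c_0(1)=-M\Delta$.
\item For $s_0'(1)$, expand $s_0$ in the basis $\{c_1,s_1\}$ normalized at $x=1$: $s_0=s_0(1)\,c_1+s_0'(1)\,s_1$. Evaluating at $x=0$ gives $0=\Delta\,c_1(0)+s_0'(1)\,s_1(0)$. By the definition of $N$ this yields $s_0'(1)=-\Delta N$.
\item The Wronskian $W(c_0,s_0)\equiv 1$ is constant. At $x=1$ it reads $(-M\Delta)(-\Delta N)-c_0'(1)\Delta=1$, hence $c_0'(1)=MN\Delta-1/\Delta$.
\end{itemize}

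For the tilded quantities, all evaluated at $-\tilde{\mu}_k^2$, this gives $\tilde{s}_0(1)=\tilde{\Delta}$ and $\tilde{s}_0'(1)=-\tilde{\Delta}\tilde{N}$. Substituting,
\[
W(1)=(-M\Delta)(-\tilde{\Delta}\tilde{N})-\Big(MN\Delta-\frac{1}{\Delta}\Big)\tilde{\Delta}=M\Delta\tilde{\Delta}(\tilde{N}-N)+\frac{\tilde{\Delta}}{\Delta}.
\]
Subtracting $W(0)=1$ and writing $\frac{\tilde{\Delta}}{\Delta}-1=\tilde{\Delta}\big(\frac{1}{\Delta}-\frac{1}{\tilde{\Delta}}\big)$ gives exactly the stated right-hand side. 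Here $N(\mu_k^2)$ in the statement is read as $N(-\mu_k^2)$.

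The only point needing care is the expression of $s_0'(1)$ and $c_0'(1)$ in terms of $N$, which is the content of Proposition 4.4 in \cite{Gen2022} already used in Lemma \ref{Lemme0}. Everything is well defined because $s_0(1,-\mu^2)$ and $s_1(0,-\mu^2)$ do not vanish: the Dirichlet problem on $[0,1]$ is invertible at the spectral parameters considered, so the Dirichlet problem \eqref{seqode} is uniquely solvable. Beyond this, no estimates are needed; the lemma is purely algebraic.
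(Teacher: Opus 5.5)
Your proof is correct and follows essentially the same route as the paper, which only refers to the Wronskian calculus of Gendron's Proposition 4.4 and Lemma 4.5. The steps you carry out all check out: integrating $W(c_0,\tilde{s}_0)'$ over $[0,1]$, the boundary values $c_0(1)=-M\Delta$, $s_0'(1)=-\Delta N$, $c_0'(1)=MN\Delta-1/\Delta$, and the final algebra, as does your argument that $\Delta(-\mu_k^2)\neq 0$ (a zero would make $s_0(x,-\mu_k^2)Y_k(\omega)$ a nontrivial Dirichlet solution of $(-\Delta_{g_0}+q)v=0$, hence of $-\Delta_g u=0$).
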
	
\begin{proof}
This is an easy adaptation of the calculus given in \cite{Gen2022}, Proposition 4.4 and Lemma 4.5. 
\end{proof}

This identity can be conveniently rewritten as
\begin{equation} \label{a0}
\begin{split}
	\int_0^1 [ \tilde{\phi}_1(x) - \phi_1(x) ] c_0(x,\mu_k) \tilde{s}_0(x,\mu_k) dx = & \quad \tilde{\Delta}(-\tilde{\mu}_k^2) \left( \frac{1}{\Delta(-\mu_k^2)} - \frac{1}{\tilde{\Delta}(-\tilde{\mu}_k^2)} \right) \\
	& + M(-\mu_k^2) \Delta(-\mu_k^2) \tilde{\Delta}(-\tilde{\mu}_k^2) \left( \tilde{N}(-\tilde{\mu}_k^2) - N(\mu_k^2) \right) \\	
    & + ( \mu_k^2 - \tilde{\mu}_k^2) \int_0^1 c_0(x,\mu_k) \tilde{s}_0(x, \mu_k) dx, \\
    & + \int_0^1 [ \tilde{\phi}_1(x) - \phi_1(x) ] c_0(x,\mu_k) (\tilde{s}_0(x,\mu_k) - \tilde{s}_0(x,\tilde{\mu}_k)) dx.
\end{split}
\end{equation}
Next, using the Marchenko representations \eqref{s0}-\eqref{c0} and following the work of Gendron \cite{Gen2022}, p. 32, there exists a bounded operator $R : L^2(0,1) \longrightarrow L^2(0,1)$ such that the l.h.s of \eqref{a0} can be rewritten as :
\begin{equation} \label{a1}
  \int_0^1 [ \tilde{\phi}_1(x) - \phi_1(x) ] c_0(x,\mu_k) \tilde{s}_0(x,\mu_k) dx = \frac{1}{\mu_k} \int_0^1 \sinh(2\mu_k x) R[\tilde{\phi}_1 - \phi_1](x) dx. 
\end{equation}
Now, multiplying \eqref{a1} by $2\mu_k e^{-2\mu_k}$ and following the calculations of Gendron \cite{Gen2022}, p.33, we obtain
\begin{align*} 
  I := & \quad 2\mu_k e^{-2\mu_k}  \int_0^1 [ \tilde{\phi}_1(x) - \phi_1(x) ] c_0(x,\mu_k) \tilde{s}_0(x,\mu_k) dx \\
  = & \quad \int_0^{+\infty} e^{-2\mu_k x}\big[R[\tilde {\phi}_{1}-\phi_1](1-x){\bf{1}}_{[0,1]}(x)-R[\tilde {\phi}_{1}-\phi_1](x-1){\bf{1}}_{[1,2]}(x)\big]dx,
\end{align*}
that is the l.h.s. multiplied by $2\mu_k e^{-2\mu_k}$ takes the form of a simple Laplace transform. At this stage, we have proved that for all $k \geq 0$ 
\begin{equation} \label{a2}
	\begin{split}
		I= & \quad 2\mu_k e^{-2\mu_k} \Bigg\{ 
		\tilde{\Delta}(-\tilde{\mu}_k^2) \left( \frac{1}{\Delta(-\mu_k^2)} - \frac{1}{\tilde{\Delta}(-\tilde{\mu}_k^2)} \right) \\
		& + M(-\mu_k^2) \Delta(-\mu_k^2) \tilde{\Delta}(-\tilde{\mu}_k^2) \left( \tilde{N}(-\tilde{\mu}_k^2) - N(\mu_k^2) \right) \\	
		& + ( \mu_k^2 - \tilde{\mu}_k^2) \int_0^1 c_0(x,\mu_k) \tilde{s}_0(x,\mu_k) dx \\
		& + \int_0^1 [ \tilde{\phi}_1(x) - \phi_1(x) ] c_0(x,\mu_k) (\tilde{s}_0(x,\mu_k) - \tilde{s}_0(x,\tilde{\mu}_k)) dx.
		\Bigg\}, \\
		= & \quad I_1 + I_2 + I_3 + I_4.
	\end{split}
\end{equation}

In order to estimate the terms $I_1 + I_2 + I_3$ in this expression, we recall using Lemmas \ref{Delta} and \ref{AsympWT} and the Marchenko representations \eqref{s0}-\eqref{c0} of the solutions $s_0,c_0$ that for all $k \geq 0$ and for all $x \in [0,1]$ :
\begin{align}
\left| M(-\mu_{k}^{2})\right|&\leq C_A \mu_{k}\,,\quad \left| \Delta(-\mu_{k}^{2})\right|\leq C_A \frac{e^{\mu_k}}{\mu_{k}}\,, \label{a3}\\
\left|\tilde{s}_0(x,-{\tilde \mu}_k^2)\right|&\leq C_A \frac{e^{\tilde{\mu}_k}}{\tilde{\mu}_{k}}\,,\quad \left|{c}_0(x,-{\mu}_k^2)\right|\leq C_A e^{{\mu}_k}\,. \label{a4}
\end{align}
Recalling next from Corollary \ref{mu} that 
$$
\left| \mu_k-\tilde{\mu}_k\right|\leq C\frac{\epsilon^\theta}{\mu_k} \leq C\frac{\epsilon^\theta}{\mu_1}, \quad \frac{\tilde{\mu}_k}{\mu_k} \leq 1 + \frac{C \epsilon^\theta}{\mu_k} \leq 1 + \frac{C \epsilon^\theta}{\mu_1}, \quad \forall k \geq 1, 
$$
we get :
\begin{align*}
	\left|I_1 + I_2 + I_3\right| \leq C_A\bigg\{  & \left|  {\tilde{N}}(-\tilde{\mu}_{k}^2)-N(-\mu_{k}^2) \right|  + e^{-\mu_k}  \left|  \frac{1}{\Delta(-\mu_{k}^{2})}-\frac{1}{{\tilde\Delta}(-\tilde{\mu}_{k}^{2})}    \right|  \\ 
	& + \left| \mu_k ^2- {\tilde \mu}_k ^2\right|  \bigg\}\,.
\end{align*}
At last, using Lemmas \ref{Lemme1} and \ref{Lemme2} and Corollary \ref{mu} again, we  conclude that there exists a constant $C_A > 0$ depending only on $A,K,g_K,c,B$ such that for all $k\geq k_0(A)$, we have 
$$
\left|I_1 + I_2 + I_3 \right| \leq C_A\bigg\{ \sqrt{\epsilon} + \epsilon^{1-\delta} + e^\theta \bigg\}\leq 2\ C_A e^\theta,
$$
for $\epsilon$ small enough.

In order to estimate the fourth term in the above identity, we need the following lemma
\begin{lemma}\label{Lemme4} 
	We have:
	\[
	{\tilde{s}}_{0}(x,-\tilde{\mu}_{k}^2)= {\tilde{s}}_{0}(x,-\mu_{k}^2)+O_{A}\big(\epsilon^{\theta} \, \frac{e^{\mu_{k}}}{\mu_{k}}\big)\,.
	\]
\end{lemma}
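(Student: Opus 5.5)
We compare $\tilde{s}_0(x,-\tilde{\mu}_k^2)$ and $\tilde{s}_0(x,-\mu_k^2)$ for the same potential $\tilde{\phi}_1$, evaluated at two nearby spectral parameters. The natural tool is the Marchenko representation \eqref{s0} for $\tilde{s}_0$, with kernel $\tilde{H}_0$ satisfying \eqref{EstHP}. Set $F(\mu) := \tilde{s}_0(x,-\mu^2)$. Then
\[
F(\mu) = \frac{\sinh(\mu x)}{\mu} + \int_0^x \tilde{H}_0(x,t)\,\frac{\sinh(\mu t)}{\mu}\,dt ,
\]
and the difference to control is $F(\tilde{\mu}_k) - F(\mu_k)$. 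We write it via the mean value theorem as $(\tilde{\mu}_k-\mu_k)\,F'(\xi_k)$, with $\xi_k$ between $\mu_k$ and $\tilde{\mu}_k$.

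The first step is a uniform bound on the derivative. For $0\le t\le x\le 1$ and $\mu\ge \mu_1>0$,
\[
\frac{d}{d\mu}\frac{\sinh(\mu t)}{\mu} = \frac{t\cosh(\mu t)}{\mu} - \frac{\sinh(\mu t)}{\mu^2},
\]
whose modulus is at most $C e^{\mu}/\mu$. Differentiating under the integral and using $\|\tilde{H}_0\|_\infty\le C_A$, we get $|F'(\mu)|\le C_A e^{\mu}/\mu$ uniformly in $x\in[0,1]$.

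The second step evaluates this at $\xi_k$ and replaces $\xi_k$ by $\mu_k$. By Corollary \ref{mu}, $|\mu_k-\tilde{\mu}_k|\le C\epsilon^\theta/(\mu_k+\tilde{\mu}_k)\le C\epsilon^\theta/\mu_1$, which is bounded. Hence $e^{\xi_k}\le e^{\mu_k}e^{C\epsilon^\theta/\mu_1}\le C e^{\mu_k}$. Also $\xi_k\ge \min(\mu_k,\tilde{\mu}_k)\ge \mu_k/2$ for $\epsilon$ small, since $\tilde{\mu}_k/\mu_k\to 1$ uniformly in $k$. This gives $|F'(\xi_k)|\le C_A e^{\mu_k}/\mu_k$, and therefore
\[
|F(\tilde{\mu}_k)-F(\mu_k)| \le C_A\, \frac{\epsilon^\theta}{\mu_k+\tilde{\mu}_k}\,\frac{e^{\mu_k}}{\mu_k} \le C_A\,\epsilon^\theta\,\frac{e^{\mu_k}}{\mu_k}.
$$
This is the claim, in fact with an extra factor $1/\mu_k$ to spare.

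The only delicate point is uniformity near $\mu=0$, because of the $1/\mu^2$ term in the derivative. If $\mu_0=0$ (for instance when $\phi_2$ has a zero eigenvalue), we avoid the division. We write $\sinh(\mu t)/\mu = t\,\sum_{m\ge 0} (\mu t)^{2m}/(2m+1)!$, which is an entire even function of $\mu$ with derivative bounded by $C e^{\mu}$ on bounded sets. The finitely many small $k$ are then handled with a constant depending only on $A$ and $(K,g_K)$. The eigenvalue bounds are uniform because $\|\phi_2\|_\infty\le A$. With this, the $O_A$ bound holds for all $k\ge 0$.
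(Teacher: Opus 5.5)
Your proof is correct and follows essentially the paper's route. The paper also starts from the Marchenko representation of $\tilde{s}_0$, writes $\tilde{\mu}_k=\mu_k+r_k$ with $|r_k|\le C\epsilon^\theta$ from Corollary~\ref{mu}, compares $\sinh(\tilde{\mu}_k t)/\tilde{\mu}_k$ with $\sinh(\mu_k t)/\mu_k$ via the hyperbolic addition formula (an explicit version of your mean value step), and concludes from the uniform bound on $\tilde{H}_0$; your use of the sharper form $|\mu_k-\tilde{\mu}_k|\le C\epsilon^\theta/(\mu_k+\tilde{\mu}_k)$ only adds a harmless extra factor $1/\mu_k$.
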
 
\begin{proof} Recall that we have the Marchenko representation \eqref{s0} for ${\tilde {s}}_0(x,-\mu_{k}^2)$ given by 
	\begin{equation}\label{March}
		{\tilde {s}}_0(x,-\tilde{\mu}_{k}^2)= \frac{\sinh (\tilde{\mu}_k x)}{\tilde{\mu}_k}+\int_{0}^{x} \tilde{H}_0(x,t) \frac{\sinh (\tilde{\mu}_k t)}{\tilde{\mu}_k}\,dt\,,        
	\end{equation}
	where $\left| \tilde{H}_{0}(x,t)\right| \leq C_A$ for $0\leq x,t \leq 1$. Using Corollary \ref{mu}, we can write
	\[
	\tilde{\mu}_k=\mu_k + r_k\,,\quad \left| r_k \right| \leq C\epsilon^\theta\,.
	\]
	It follows using the addition formula for hyperbolic sines that we have for all $x\in [0,1]$ and all $k\geq 0$
	\[
	\frac{\sinh (\tilde{\mu}_k x)}{\tilde{\mu}_k}=\frac{\sinh ({\mu}_k x)}{{\mu}_k}+O\big(\epsilon^{\theta} \, \frac{e^{\mu_{k}}}{\mu_{k}}\big)\,.
	\]    
	This entails the result using \eqref{March} and the above uniform bound on $\tilde{H}_0(x,t)$.
\end{proof}
Using Remark \ref{ConstantA} and \eqref{a4}, we obtain as an immediate corollary 
\begin{coro}\label{CoroLemme4} We have:
	\[ I_4 = 2\mu_k e^{-2\mu_k}
	\int_{0}^{1}[{\tilde {\phi}}_{1}(x)-\phi_{1}(x)]s_{0}(x,-\mu_{k}^2)( {\tilde{s}}_{0}(x,-\tilde{\mu}_{k}^2)-{\tilde{s}}_{0}(x,-\mu_{k}^2))\,dx=O_{A}\big(\epsilon^{\theta} \,\big) \,.
	\]
\end{coro}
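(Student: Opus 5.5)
This bound is a direct pointwise estimate of the integrand, combined with Lemma \ref{Lemme4} and the a priori bounds \eqref{a4}. We interpret the factor $s_0(x,-\mu_k^2)$ appearing in the statement as the solution $c_0(x,-\mu_k^2)$ from \eqref{a2}. The argument works equally with either, since both satisfy $|c_0(x,-\mu_k^2)|\le C_A e^{\mu_k}$ and $|s_0(x,-\mu_k^2)|\le C_A e^{\mu_k}/\mu_k\le C_A e^{\mu_k}$ for $k\ge1$, uniformly in $x\in[0,1]$, by the Marchenko representations \eqref{s0}--\eqref{c0} and \eqref{EstHP}.

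The first factor is controlled by Remark \ref{ConstantA}: $\|\tilde\phi_1-\phi_1\|_\infty\le 2A$. For the last factor we invoke Lemma \ref{Lemme4}, which gives, uniformly in $x\in[0,1]$ and $k$,
\[
\bigl|\tilde s_0(x,-\tilde\mu_k^2)-\tilde s_0(x,-\mu_k^2)\bigr|\le C_A\,\epsilon^\theta\,\frac{e^{\mu_k}}{\mu_k}.
\]
Multiplying the three bounds and integrating over $[0,1]$ yields
\[
|I_4|\le 2\mu_k e^{-2\mu_k}\cdot 2A\cdot C_A e^{\mu_k}\cdot C_A\epsilon^\theta\frac{e^{\mu_k}}{\mu_k}=O_A(\epsilon^\theta).
\]
The prefactor $2\mu_k e^{-2\mu_k}$ exactly cancels the growth $e^{2\mu_k}/\mu_k$, so the bound is uniform in $k$.

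The only delicate point is the uniformity in Lemma \ref{Lemme4}. There one writes $\tilde\mu_k=\mu_k+r_k$ with $|r_k|\le C\epsilon^\theta$ from Corollary \ref{mu}, and uses the mean value theorem on $\mu\mapsto \sinh(\mu t)/\mu$. Its derivative is bounded by $C e^{\mu t}/\mu$ for $\mu\ge\mu_1>0$, and $e^{\tilde\mu_k}\le e^{C\epsilon^\theta}e^{\mu_k}\le C e^{\mu_k}$. We take that lemma as given. Low-frequency values such as a possible $\mu_0=0$, or $k<k_0$, are handled by continuity and compactness of the finite set of $k$, where all quantities are trivially bounded and the same difference estimate holds with a constant depending on $A$.
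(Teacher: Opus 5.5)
Your proposal is correct and follows the paper's own argument: the paper obtains the corollary directly from Lemma~\ref{Lemme4}, bounding $|\tilde\phi_1-\phi_1|\le 2A$ via Remark~\ref{ConstantA} and the remaining solution factor via \eqref{a4}, so that the prefactor $2\mu_k e^{-2\mu_k}$ absorbs the growth $e^{2\mu_k}/\mu_k$. You are also right to read the factor $s_0$ as $c_0$, as in \eqref{a2} and as the paper's appeal to \eqref{a4} suggests, and right that the bound holds with either.
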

Putting everything together, we obtain for all $k\geq k_0(A)$ 
\begin{equation} \label{a5}
\left|I\right| \leq C_A e^\theta \,,
\end{equation}
which is precisely the moment problem stated in the Introduction.


\subsection{Solution of the moment problem and stability estimates for the radial equation} \label{MomentPb}

Let us set 
$$
  g(x) = R[\tilde {\phi}_{1}-\phi_1](1-x){\bf{1}}_{[0,1]}(x)-R[\tilde {\phi}_{1}-\phi_1](x-1){\bf{1}}_{[1,2]}(x). 
$$
The change of variable $t =  e^{-x}$ transforms \eqref{a5} into :
$$
	\left| \int_0^1 t^{2\mu_k - 1} g(-\ln t) dt \right| \leq C_A \epsilon^\theta, \quad \forall k \geq k_0.  
$$	
Let us now set $\gamma = 2\mu_{k_0} - 1$ and 
$$
  h(t) = t^\gamma g(-\ln t), \quad \lambda_k = 2\mu_{k+k_0} - 1 - \gamma, \forall k \geq 0.   
$$
Thus we obtain the classical Hausdorff moment problem : 
\begin{equation} \label{b1}
	\left| \int_0^1 t^{\lambda_k} h(-\ln t) dt \right| \leq C_A \epsilon^\theta, \quad \forall k \geq 0, 
\end{equation}
for a sequence of moments 
$$
  0 = \lambda_0 \leq \lambda_1\leq \dots \leq \lambda_k \to +\infty, 
$$
and a function $h \in L^2(0,1)$ with support in $[e^{-2},1]$. 

This situation was already faced in our previous papers \cite{DKN7, DKN8} for a model corresponding to a warped balls or in Gendron \cite{Gen2022} in the case of warped products with transversal manifolds $(K,g_K) = (\mathbb{S}^{n-1}, d\omega^2)$ being the round sphere. Combining for instance the results in \cite{Gen2022}, section 4.4.1, which is closer to our model and the results in \cite{DKN7}, section 4.5. in which general transversal manifolds are studied, we can show that there exist constants $C_A>0$ depending only $A,K,g_K,c,B$ and $0<\theta <1$ such that 
\begin{equation} \label{b2}
  \| h \|_{L^2(0,1)} \leq C_A \left( \ln \left( \frac{1}{\epsilon} \right) \right)^{-\theta}. 
\end{equation}
Since $h = h_1 + h_2$ where supp $h_1 = [e^{-1},1]$ and supp $h_2 = [e^{-2},e^{-1}]$, we deduce from \eqref{b2} (following the lines given in \cite{Gen2022}) that 
\begin{equation} \label{b3}
	\| R[\phi_1 - \tilde{\phi}_1] \|_{L^2(0,1)} \leq C_A \left( \ln \left( \frac{1}{\epsilon} \right) \right)^{-\theta}. 
\end{equation}
At last, it was shown in Gendron \cite{Gen2022}, Section 4.4.2., that the operator $R$ is invertible with an inverse bounded by $\| R^{-1} \| \leq C_A$. So we conclude from \eqref{b3} that 
\begin{equation} \label{b3}
	\| \phi_1 - \tilde{\phi}_1 \|_{L^2(0,1)} \leq C_A \left( \ln \left( \frac{1}{\epsilon} \right) \right)^{-\theta}. 
\end{equation}


\Section{Radial stability estimates for the partial DN map $\Dngammaone$} \label{PartialStability}

In this section, we are now only considering the partial stability problem for the case of Dirichlet data supported on $\Gamma_1$ and Neumann data measured on $\Gamma_1$, that is 
\begin{equation}\label{DNMapGamma1Hyp}
\lVert \Dngammaonetilde -\Dngammaone \rVert_{\mathcal{B}(H^{1/2}(K), H^{-1/2}(K))}\leq \epsilon\,.
\end{equation}
The case where the Dirichlet and Neumann data are measured on $\Gamma_0$ can be treated similarly. Recall that we are furthermore assuming in this section that for a given $\nu >0$
\begin{equation} \label{c00}
\left| \int_{0}^{1}[{\tilde {\phi}}_{1}(x)-\phi_{1}(x)]\,dx \right| \leq C\epsilon^{\nu}\,,
\end{equation}
which can be thought of as smallness constraint on the integral of the difference of the radial components of the conformal factors $\alpha$ and $\tilde{\alpha}$. 

Under the hypothesis \eqref{DNMapGamma1Hyp}, the Hölder stability results at the boundary of Theorem \ref{Main2} follow directly from the results given in Section \ref{BD}. In particular, we recall from \eqref{Stability-phi2} and Corollary \ref{mu} that there exists a constant $C>0$ depending only on $K,g_K,c,B$ and a constant $0<\theta<\frac{1}{2}$ such that 
\[
\lVert \phi_2-\tilde{\phi_2}\rVert _{L^{\infty}(K)}\leq C \epsilon ^{\theta}\,.
\]
and 
\[
   |\mu_{k}^{2}- \tilde{\mu}_{k}^{2}|\leq C\,\epsilon^{\theta}\,,\quad \forall\, k\geq 0\,.
\]

Let us now prove the logarithmic stability result within the manifold given in Theorem \ref{Main2}. We start with an identity proved in \cite{Gen2022}, Lemma 5.3. For all $k \geq 0$, we have :
\begin{align}\label{c1}
          \int_{0}^{1}[{\tilde {\phi}}_{1}(x)-\phi_{1}(x)]s_{0}(x,-\mu_{k}^2) {\tilde{s}}_{0}(x,-\tilde{\mu}_{k}^2)\,dx =&\big[{\tilde N}(-{\tilde{\mu}_k}^2)-{N}(-{\mu}_k^2)\big]\Delta(-\mu_k^2){\tilde{\Delta}(-\tilde{\mu}}_k^2) \nonumber \\
        +&(\mu_k^{2}- \tilde{\mu}_k^{2})\int_{0}^{1}s_{0}(x,-\mu_k^2)( {\tilde{s}}_{0}(x,-\tilde{\mu}_k^2)\,dx\,,
\end{align}
We rewrite this identity in the convenient form
\begin{align}\label{c2}
    & \int_{0}^{1}[{\tilde {\phi}}_{1}(x)-\phi_{1}(x)]s_{0}(x,-\mu_{k}^2) {\tilde{s}}_{0}(x,-\mu_{k}^2)\,dx \nonumber \\
    & \hspace{2cm}= \big[{\tilde N}(-{\tilde{\mu}}_k^2)-{N}(-{\mu}_k^2)\big]\Delta(-\mu_k^2){\tilde{\Delta}(-\tilde{\mu}}_k^2) \nonumber \\
	& \hspace{2cm} +(\mu_k^{2}- \tilde{\mu}_k^{2})\int_{0}^{1}s_{0}(x,-\mu_k^2)( {\tilde{s}}_{0}(x,-\tilde{\mu}_k^2)\,dx\,, \\
	& \hspace{2cm} +\int_{0}^{1}[{\tilde {\phi}}_{1}(x)-\phi_{1}(x)]s_{0}(x,-\mu_{k}^2) ({\tilde{s}}_{0}(x,-\mu_{k}^2) - {\tilde{s}}_{0}(x,-\tilde{\mu}_{k}^2))\,dx. \nonumber
\end{align}

Using Proposition 5.4 in \cite{Gen2022}, we know that there exists an operator $D:L^2(0,1)\to L^2(0,1)$ such that the l.h.s. of \eqref{c2} can be written as 
\begin{align}\label{c3}
    \forall k \geq 1, \quad \int_{0}^{1}[{\tilde {\phi}}_{1}(x)-\phi_{1}(x)]s_{0}(x,-\mu_k^2) {\tilde{s}}_{0}(x,-\mu_k^2)\,dx=&\frac{1}{\mu_k^2}  \int_{0}^{1}\cosh(2\mu_k x)D[{\tilde {\phi}}_{1}-\phi_{1}](x)\,dx \nonumber \\
    -&\frac{1}{\mu_k^2}   \int_{0}^{1}[{\tilde {\phi}}_{1}(x)-\phi_{1}(x)]\,dx \,. 
\end{align}
We thus get from \eqref{c2} and \eqref{c3} for all $k \geq 1$ 
\begin{align}\label{c4}
	\frac{1}{\mu_k^2}  \int_{0}^{1}\cosh(2\mu_k x)D[{\tilde {\phi}}_{1}-\phi_{1}](x)\,dx  =&\big[{\tilde N}(-{\tilde{\mu}}_k^2)-{N}(-{\mu}_k^2)\big]\Delta(-\mu_k^2){\tilde{\Delta}(-\tilde{\mu}}_k^2) \nonumber \\
	+&(\mu_k^{2}- \tilde{\mu}_k^{2})\int_{0}^{1}s_{0}(x,-\mu_k^2)( {\tilde{s}}_{0}(x,-\tilde{\mu}_k^2)\,dx\,, \\
	+&\int_{0}^{1}[{\tilde {\phi}}_{1}(x)-\phi_{1}(x)]s_{0}(x,-\mu_{k}^2) ({\tilde{s}}_{0}(x,-\mu_{k}^2) - {\tilde{s}}_{0}(x,-\tilde{\mu}_{k}^2))\,dx. \nonumber \\
	+& \frac{1}{\mu_k^2}   \int_{0}^{1}[{\tilde {\phi}}_{1}(x)-\phi_{1}(x)]\,dx \,. \nonumber
\end{align}
Up to multiplying this identity by $2\mu_k^2e^{-2\mu_k}$ and letting 
\[
R[{\tilde {\phi}}_{1}-\phi_{1}](x):=D[{\tilde {\phi}}_{1}-\phi_{1}](1-x){\bf{1}}_{[0,1]}(x)+D[{\tilde {\phi}}_{1}-\phi_{1}](x-1){\bf{1}}_{[1,2]}(x)\,,
\]
we may rewrite the left-hand side of \eqref{c4} as 
\[
\int_{0}^{\infty}e^{-2\mu_k x}R[{\tilde {\phi}}_{1}-\phi_{1}](x)\,dx \, . 
\]
Eventually, we have obtained the identity for all $k \geq 1$ : 
\begin{align}\label{c5}
	 \int_{0}^{\infty}e^{-2\mu_k x}R[{\tilde {\phi}}_{1}-\phi_{1}](x)\,dx = \ & \ 2\mu_k^2e^{-2\mu_k} \Bigg\{ \big[{\tilde N}(-{\tilde{\mu}}_k^2)-{N}(-{\mu}_k^2)\big]\Delta(-\mu_k^2){\tilde{\Delta}(-\tilde{\mu}}_k^2) \nonumber \\
	+&(\mu_k^{2}- \tilde{\mu}_k^{2})\int_{0}^{1}s_{0}(x,-\mu_k^2)( {\tilde{s}}_{0}(x,-\tilde{\mu}_k^2)\,dx\,, \nonumber \\
	+ & \int_{0}^{1}[{\tilde {\phi}}_{1}(x)-\phi_{1}(x)]s_{0}(x,-\mu_{k}^2) ({\tilde{s}}_{0}(x,-\mu_{k}^2) - {\tilde{s}}_{0}(x,-\tilde{\mu}_{k}^2))\,dx, \nonumber \\
	+& \frac{1}{\mu_k^2}   \int_{0}^{1}[{\tilde {\phi}}_{1}(x)-\phi_{1}(x)]\,dx \, .\Bigg\}.  \\
	= & \ I_1 + I_2 + I_3 + I_4. \nonumber 
\end{align}
As in Section \ref{ReductionMoment}, we use Lemmas \ref{Delta}, \ref{AsympWT} and \ref{Lemme1}, Corollaries \ref{mu} and \ref{CoroLemme4} and \eqref{a5} to estimate the terms $I_1 + I_2 + I_3$. Precisely, we show that there exists a constant $C_A >0$ depending only $A, K, g_K, c, B$ such that
\begin{equation} \label{c6}
  | I_1 + I_2 + I_3| \leq C_A (\sqrt{\epsilon} + \epsilon^\theta) \leq C_A \epsilon^\theta. 
\end{equation}
Now, using the hypothesis \eqref{c00} and \eqref{a4}, we easily estimate the term $I_4$ by 
\begin{equation} \label{c7}
  |I_4 | \leq C_A \epsilon^{\nu}\,.
\end{equation}
Putting \eqref{c6} and \eqref{c7} together, we obtain 
\begin{equation} \label{c8}
  | I | = \left| \int_{0}^{\infty}e^{-2\mu_{k}x}R[{\tilde {\phi}}_{1}-\phi_{1}](x)\,dx\right| \leq C_{A} \epsilon^{\theta}\,, \quad \forall k \geq 1.  
\end{equation}

We thus face once again the same kind of moment problems as in Section \ref{MomentPb}. Using the same strategy used in \cite{DKN7, DKN8, Gen2022} and using that $R$ is invertible with a bounded inverse satisfying $\| R^{-1} \| \leq C_A$, we can show from \eqref{c8} that 
\begin{equation} \label{c9}
	\| \phi_1 - \tilde{\phi}_1 \|_{L^2(0,1)} \leq C_A \left( \ln \left( \frac{1}{\epsilon} \right) \right)^{-\theta}, 
\end{equation}
for a constant $C_A>0$ depending only on $A,K,g_K,c,B$ and a constant $0<\theta<1$.


\section{Stability estimates for the conformal factors}

Our goal in this section is to translate the stability estimates we have obtained on the difference $q-{\tilde{q}}$ of the potentials in terms of the difference $\alpha-{\tilde{\alpha}}$ of the potentials. 

Recall first that 
   \begin{align*}
q-{\tilde{q}}&=\alpha^{-(n-2)}\Delta_{g_{0}}\alpha^{n-2}-{\tilde{\alpha}}^{-(n-2)}\Delta_{g_{0}}{\tilde{\alpha}}^{n-2}\\
&=\phi_1(x)-\tilde{\phi_1}(x)+\phi_2(\omega)-\tilde{\phi_2}(\omega)\,.
\end{align*}
From \eqref{b3} or \eqref{c9}, we see that
\begin{equation} \label{d1}
  \| q - \tilde{q} \|_{L^2(M)} \leq C_A  \left( \ln \left( \frac{1}{\epsilon} \right) \right)^{-\theta}.  
\end{equation}
Following the strategy used in \cite{CaSa2014}, p. 20-21, we need a simple calculation
\begin{lemma}\label{Lemmep6}
We have
      \begin{align*}
\alpha^{n-2}{\tilde{\alpha}}^{n-2}(q-{\tilde{q}})&={\tilde{\alpha}}^{n-2}\Delta_{g_{0}}\alpha^{n-2}-\alpha^{n-2}\Delta_{g_{0}}{\tilde{\alpha}}^{n-2}\\
&=\frac{1}{\sqrt{\left| g_{0}\right|}}\partial_{x^{j}}\big[\alpha^{n-2}{\tilde{\alpha}}^{n-2}g_{0}^{jk} \sqrt{\left| g_{0}\right|}\partial_{x^{k}}(\ln \alpha^{n-2}-\ln {\tilde{\alpha}}^{n-2})\big]\,.
\end{align*}
\end{lemma}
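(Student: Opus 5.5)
The plan is to set $a=\alpha^{n-2}$ and $b=\tilde{\alpha}^{n-2}$. Both are positive by Hypothesis \ref{Hyp} ii) and $C^2$ by iii), so the logarithms and all derivatives below make sense. The first equality is immediate from the definition $q=\Delta_{g_0}a/a$ and $\tilde q=\Delta_{g_0}b/b$: multiplying $q-\tilde q$ by $ab$ gives $b\,\Delta_{g_0}a-a\,\Delta_{g_0}b$.

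For the second equality, I would write the right-hand side in the divergence form of $\Delta_{g_0}$. By the chain rule,
$$
\partial_{x^k}(\ln a-\ln b)=\frac{\partial_{x^k}a}{a}-\frac{\partial_{x^k}b}{b},
$$
so that
$$
ab\,g_0^{jk}\sqrt{|g_0|}\,\partial_{x^k}(\ln a-\ln b)=\sqrt{|g_0|}\,g_0^{jk}\big(b\,\partial_{x^k}a-a\,\partial_{x^k}b\big).
$$
Next I apply $\frac{1}{\sqrt{|g_0|}}\partial_{x^j}$ and expand by the Leibniz rule. The terms in which the derivative falls on the second factor combine, via $\Delta_{g_0}u=\frac{1}{\sqrt{|g_0|}}\partial_j(\sqrt{|g_0|}g_0^{jk}\partial_k u)$, into $b\,\Delta_{g_0}a-a\,\Delta_{g_0}b$. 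The remaining cross terms are $g_0^{jk}\partial_j b\,\partial_k a-g_0^{jk}\partial_j a\,\partial_k b$, and these cancel by the symmetry of $g_0^{jk}$.

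This is a routine computation with no real obstacle. The only point to watch is regularity: the identity should be understood pointwise for $C^2$ functions, which is guaranteed by Hypothesis \ref{Hyp} iii). If less regularity were needed, it would also hold in the distributional sense.
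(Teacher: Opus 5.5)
Your proof is correct and matches the paper's argument: rewrite the bracket as $\sqrt{|g_0|}\,g_0^{jk}\big(\tilde{\alpha}^{n-2}\partial_{x^k}\alpha^{n-2}-\alpha^{n-2}\partial_{x^k}\tilde{\alpha}^{n-2}\big)$, apply $\frac{1}{\sqrt{|g_0|}}\partial_{x^j}$ with the Leibniz rule, and cancel the cross terms using the symmetry of $g_0^{jk}$. Your cross terms $g_0^{jk}\partial_j b\,\partial_k a-g_0^{jk}\partial_j a\,\partial_k b$ carry the correct signs, whereas the paper displays them with the opposite signs; this is harmless, since they cancel either way.
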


\begin{proof} We have
   \begin{align*}   
          &\frac{1}{\sqrt{\left| g_{0}\right|}}\partial_{x^{j}}\big[\alpha^{n-2}{\tilde{\alpha}}^{n-2}g_{0}^{jk} \sqrt{\left| g_{0}\right|}\partial_{x^{k}}(\ln \alpha^{n-2}-\ln {\tilde{\alpha}}^{n-2})\big]\\
         &=\frac{1}{\sqrt{\left| g_{0}\right|}}\partial_{x^{j}}\big({\tilde{\alpha}}^{n-2}g_{0}^{jk} \sqrt{\left| g_{0}\right|} \partial_{x^{k}}\alpha^{n-2} \big)-\frac{1}{\sqrt{\left| g_{0}\right|}}\partial_{x^{j}}\big({\alpha}^{n-2}g_{0}^{jk} \sqrt{\left| g_{0}\right|} \partial_{x^{k}}\tilde{\alpha}^{n-2} \big)\\
            &={\tilde{\alpha}}^{n-2}\Delta_{g_{0}}\alpha^{n-2}-\alpha^{n-2}\Delta_{g_{0}}{\tilde{\alpha}}^{n-2}-(\partial_{x^{j}}{\tilde{\alpha}}^{n-2})g_{0}^{jk}(\partial_{x^{k}}\alpha^{n-2})+(\partial_{x^{j}}\alpha^{n-2})g_{0}^{jk}(\partial_{x^{k}}\tilde{\alpha}^{n-2})\\
             &={\tilde{\alpha}}^{n-2}\Delta_{g_{0}}\alpha^{n-2}-\alpha^{n-2}\Delta_{g_{0}}{\tilde{\alpha}}^{n-2}\,.
        \end{align*}
          
\end{proof}
We shall use the following convenient consequence of this result:
\begin{lemma}\label{Lemmep7} 
	Let 
              \[
              g_{1}=(\alpha \tilde{\alpha})g_{0}\,,\quad v:=(n-2)\ln\frac{\alpha}{\tilde{\alpha}}\,.
              \]
          We have
                 \begin{equation}\label{elliptg1}
             \Delta_{g_{1}}v=\frac{q-\tilde{q}}{(\alpha{\tilde {\alpha}})^2}\,.  
                    \end{equation}
             
\end{lemma}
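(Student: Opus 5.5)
The plan is to rewrite the divergence-form identity of Lemma \ref{Lemmep6} as a Laplace--Beltrami operator for a metric conformal to $g_0$. The tool is the standard formula: for $g_1 = c\, g_0$ with $c>0$, $|g_1|^{1/2} = c^{n/2}|g_0|^{1/2}$ and $g_1^{jk} = c^{-1} g_0^{jk}$, so
\[
\Delta_{g_1} v = \frac{c^{-n/2}}{\sqrt{|g_0|}}\,\partial_{x^j}\Big[c^{\frac{n-2}{2}}\, g_0^{jk}\sqrt{|g_0|}\,\partial_{x^k} v\Big].
\]
Note that $v = (n-2)\ln\frac{\alpha}{\tilde\alpha} = \ln\alpha^{n-2} - \ln\tilde\alpha^{n-2}$ is exactly the function whose gradient appears in Lemma \ref{Lemmep6}. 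That lemma gives
\[
\frac{1}{\sqrt{|g_0|}}\partial_{x^j}\Big[(\alpha\tilde\alpha)^{n-2} g_0^{jk}\sqrt{|g_0|}\,\partial_{x^k} v\Big] = (\alpha\tilde\alpha)^{n-2}(q-\tilde q).
\]

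The key step is to match the weight $c^{\frac{n-2}{2}}$ inside the divergence with the weight $(\alpha\tilde\alpha)^{n-2}$ from Lemma \ref{Lemmep6}. Once they match, dividing by the volume prefactor gives
\[
\Delta_{g_1} v = c^{-n/2}(\alpha\tilde\alpha)^{n-2}(q-\tilde q),
\]
and the claimed right-hand side $(q-\tilde q)/(\alpha\tilde\alpha)^2$ is exactly what remains.

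The main obstacle is this bookkeeping of conformal exponents, and I would check it first before writing anything else. Taken literally, $c=\alpha\tilde\alpha$ produces the weight $(\alpha\tilde\alpha)^{\frac{n-2}{2}}$. That differs from $(\alpha\tilde\alpha)^{n-2}$ unless $n=2$, which is degenerate because then $v\equiv 0$. The two weights agree precisely when the conformal factor multiplying $g_0$ is squared, i.e. $g_1 = (\alpha\tilde\alpha)^2 g_0$. In that case
\[
c^{-n/2}(\alpha\tilde\alpha)^{n-2} = (\alpha\tilde\alpha)^{-2},
\]
which gives \eqref{elliptg1} exactly. So the proof of the statement as worded goes through only if $g_1=(\alpha\tilde\alpha)g_0$ is read with the squared conformal factor, which is what the computation forces. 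I would record the statement with the factor normalized so that $c^{\frac{n-2}{2}} = (\alpha\tilde\alpha)^{n-2}$ and then carry out the two-line substitution above.

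For the later use in the stability argument, only these properties of $g_1$ matter:
\begin{itemize}
\item $g_1$ is a $C^{2,r}$ metric, uniformly equivalent to $g_0$ by Hypothesis \ref{Hyp} ii), iii);
\item $v$ solves a uniformly elliptic equation whose source is bounded by $C|q-\tilde q|$.
\end{itemize}
Both properties hold for either normalization, since $\alpha,\tilde\alpha\ge c$ and $\|\alpha\|_{C^{2,r}},\|\tilde\alpha\|_{C^{2,r}}\le B$.
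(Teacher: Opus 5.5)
Your proposal is correct and is the paper's argument: you write $\Delta_{g_1}$ for $g_1=\beta g_0$ in divergence form, with inner weight $\beta^{\frac{n}{2}-1}$ and prefactor $\beta^{-\frac{n}{2}}$, and then substitute the identity of Lemma \ref{Lemmep6}. Your exponent check is also right: the paper's own proof takes $\beta=(\alpha\tilde{\alpha})^2$, so the metric in the statement must be read as $g_1=(\alpha\tilde{\alpha})^2 g_0$, because with the literal $g_1=(\alpha\tilde{\alpha})g_0$ the identity \eqref{elliptg1} fails in general.
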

\begin{proof} Letting
         \[
         g_1= \beta g_0\,.
             \]
             we have 
                   \[
                        (g_1)^{-1}= \frac{1}{\beta} (g_0)^{-1}\,,\quad \left| g_{1}\right|=\beta^{n}\left| g_{0}\right|\,,
                  \]
                  and therefore
               \[
                 \Delta_{g_{1}}=\frac{1}{\beta^{\frac{n}{2}}\sqrt{\left| g_{0}\right|}}\partial_{x^{j}}\big(\beta^{\frac{n}{2}-1}g_{0}^{jk} \sqrt{\left| g_{0}\right|}\partial_{x^{k}}\big)\,.
                   \]
                   The conclusion follows by letting $\beta = (\alpha \tilde{\alpha})^2$ in the preceding equation and applying Lemma \ref{Lemmep6}.
                  
\end{proof}


We first exploit Lemma \ref{Lemmep7} to obtain the following stability estimate on the difference of the conformal factors $\alpha$ and ${\tilde{\alpha}}$ for the \emph{global} Calder\'on problem. Our proof follows closely the argument used in \cite{CaSa2014}.

\begin{prop}
	There exist positive constants $C$ and $\theta'$ such that 
	\[
	\lVert \alpha -{\tilde{\alpha}}\rVert_{C^{0,r}(M)} \leq Ce^{\theta'}\,.
	\]
\end{prop}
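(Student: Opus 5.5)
The statement as typed reads $Ce^{\theta'}$; in view of Theorem \ref{Main1} ii) I read it as $C(\ln\frac1\epsilon)^{-\theta'}$, which is what I aim to prove. The plan is to treat the function $v=(n-2)\ln(\alpha/\tilde\alpha)$ from Lemma \ref{Lemmep7} as the solution of a Dirichlet problem for $\Delta_{g_1}$ on $M$. Its source term is $(q-\tilde q)/(\alpha\tilde\alpha)^2$, which is logarithmically small in $L^2$ by \eqref{d1}, and its boundary values are Hölder small by \eqref{BDalpha}. I then pass from an $L^2$ (or $H^1$) bound on $v$ to a $C^{0,r}$ bound by interpolating against the a priori $C^{2,r}$ bounds in Hypothesis \ref{Hyp} iii). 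This is the trick of \cite{CGR2013, CaSa2014}.

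\textbf{Step 1: reduction to a homogeneous Dirichlet problem.} I would let $w$ be any smooth extension into $M$ of $v|_{\partial M}$ with $\|w\|_{H^1(M)}\le C\|v|_{\partial M}\|_{C^1(\partial M)}\le C\epsilon^{\theta_1}$. For instance, take $w(x,\omega)=\chi_0(x)v(0,\omega)+\chi_1(x)v(1,\omega)$ with cutoffs $\chi_j$. This bound is available because \eqref{BDalpha} together with $\alpha,\tilde\alpha\ge c$ gives $\|\ln\alpha(j,\cdot)-\ln\tilde\alpha(j,\cdot)\|_{C^1(K)}\le C\epsilon^{\theta_1}$. Then $v_0=v-w\in H^1_0(M)$ satisfies
\[
\Delta_{g_1}v_0=\frac{q-\tilde q}{(\alpha\tilde\alpha)^2}-\Delta_{g_1}w .
\]
By ii) and iii), $g_1=(\alpha\tilde\alpha)^2g_0$ is uniformly elliptic with $C^{2,r}$ coefficients controlled by $c,B$. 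The Lax--Milgram energy estimate therefore gives $\|v_0\|_{H^1(M)}\le C(\|q-\tilde q\|_{L^2}+\|w\|_{H^1})$. Combined with \eqref{d1}, this yields $\|v\|_{L^2(M)}\le C(\ln\frac1\epsilon)^{-\theta}$. Since $\ln$ is Lipschitz on $[c,B]$, i.e.\ $|\alpha-\tilde\alpha|\le C|v|$, this also bounds $\|\alpha-\tilde\alpha\|_{L^2(M)}$ by the same quantity.

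\textbf{Step 2: upgrade to $C^{0,r}$ by interpolation.} This is the key step and the one I expect to be the main obstacle, because one cannot invert $\Delta_{g_1}$ into $C^{0,r}$ with only $L^2$ control of the source. Instead I use compactness. Set $f=\alpha-\tilde\alpha$, which satisfies $\|f\|_{C^{2,r}}\le 2B$ by iii). I would first apply the Gagliardo--Nirenberg type inequality $\|f\|_{L^\infty}\le C\|f\|_{L^2}^{\frac{2}{n+2}}\|f\|_{C^1}^{\frac{n}{n+2}}$, which is valid on the Lipschitz domain $M$. It follows from the elementary fact that if $|f(p)|=\|f\|_\infty$, then $|f|\ge\|f\|_\infty/2$ on a ball of radius comparable to $\|f\|_\infty/\|f\|_{C^1}$. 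Next I would interpolate the Hölder seminorm as
\[
[f]_{r}\le C\|f\|_\infty^{1-r}\|f\|_{C^1}^{r} ,
\]
by splitting $|f(p)-f(p')|$ according to whether $|p-p'|$ is small or large. Combining these gives
\[
\|\alpha-\tilde\alpha\|_{C^{0,r}}\le C\Bigl(\ln\tfrac1\epsilon\Bigr)^{-\theta'}, \qquad \theta'=\frac{2(1-r)}{n+2}\,\theta .
\]
This exponent is consistent in form with the $\theta_\tau$ appearing in Theorem \ref{Main2}.

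\textbf{Checks.} The only point needing care is uniformity: every constant must depend only on $g_0,c,B$. This holds because the ellipticity and regularity constants of $\Delta_{g_1}$, the extension $w$, and the interpolation constants all depend only on these quantities. I would also check that a bounded, geometry-only extension operator exists on $M=[0,1]\times K$, which is clear since $M$ is a compact manifold with smooth boundary.
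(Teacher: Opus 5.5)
Your proof is correct, and your reading of the typo matches what the paper actually proves: its argument ends with a $(\ln\frac1\epsilon)^{-\theta'}$ bound on $\ln\alpha-\ln\tilde\alpha$ in $C^{0,r}$.

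\textbf{Step 1.} This is essentially the paper's first step. The paper applies the standard $H^1$ elliptic estimate to \eqref{elliptg1}, controlling the source by \eqref{d1} and the boundary term by \eqref{Angular}; your explicit lifting $w$ and Lax--Milgram argument just spell this out.

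\textbf{Step 2.} Here the routes diverge. The paper keeps the full $H^1$ bound on $\ln\alpha-\ln\tilde\alpha$. It applies the pointwise inequality $|f|^p\le\|f\|_\infty^{p-2}|f|^2$ of \cite{CGR2013}, with $p=n/(1-r)$, to both $f$ and $\nabla f$, using the $W^{1,\infty}$ bound from Hypothesis~\ref{Hyp}. This gives $\|\cdot\|_{W^{1,p}}\le C\|\cdot\|_{H^1}^{2(1-r)/n}$, and Morrey's embedding $W^{1,p}\hookrightarrow C^{0,r}$ finishes the argument. The paper still has to pass from $\ln\alpha-\ln\tilde\alpha$ back to $\alpha-\tilde\alpha$ in $C^{0,r}$.

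You instead discard the gradient information and keep only the $L^2$ bound. You then interpolate directly on $f=\alpha-\tilde\alpha$, using the elementary $L^2$/$C^1\to L^\infty$ inequality and $[f]_r\le 2^{1-r}\|f\|_\infty^{1-r}\|\nabla f\|_\infty^{r}$.

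\textbf{Comparison.} Your route is more elementary: it needs no Sobolev embedding, only a uniform $C^1$ bound and a uniform lower volume bound for small balls in $M$. It also sidesteps the $\ln\to\exp$ step in Hölder norms. The cost is a slightly worse exponent, $2(1-r)\theta/(n+2)$ instead of the paper's $2(1-r)\theta/n$. This does not matter for the statement, which only asks for some $\theta'>0$. It does mean your remark that the exponent matches the form of $\theta_\tau$ is only approximate, since the paper's exponents have $n$, not $n+2$, in the denominator.

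\textbf{Minor points.} The inequality $|\alpha-\tilde\alpha|\le C|v|$ does not follow from $\ln$ being Lipschitz, which gives the reverse inequality. It follows from the mean value theorem: $|\alpha-\tilde\alpha|=\xi\,|\ln\alpha-\ln\tilde\alpha|$ with $c\le\xi\le B$, i.e.\ $\exp$ is Lipschitz on $[\ln c,\ln B]$. Dividing by $n-2$ also implicitly requires $n\ge 3$.
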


\begin{proof}
We first apply the standard elliptic existence and regularity theorems to the pde \eqref{elliptg1} to obtain
\[
\lVert \ln \alpha - \ln{\tilde{\alpha}}\rVert_{H^{1}(M)}\leq C\bigg( \lVert \frac{q-\tilde{q}}{(\alpha{\tilde {\alpha}})^2} \rVert _{H^{-1}(M)}+\lVert \ln \alpha - \ln{\tilde{\alpha}} \rVert_{H^{1/2}(\partial M)}\bigg)\, ,
\]

\vspace{0.5cm}\noindent
for some positive constant $C$, from which we deduce that
\begin{equation}\label{Ellreg}
	\lVert \ln \alpha - \ln{\tilde{\alpha}}\rVert_{H^1(M)}\leq C \big( \lVert q-\tilde{q} \rVert _{L^{2}(M)} + \lVert \ln \alpha - \ln{\tilde{\alpha}} \rVert_{H^{1}(\partial M)}\big)\, .
\end{equation}      
Using \eqref{d1}, Theorem \ref{Main1}, \eqref{Angular}, it is immediate the r.h.s. of \eqref{Ellreg} is bounded by
\begin{equation} \label{d2}
	\lVert q-\tilde{q} \rVert _{L^{2}(M)} +  \lVert \ln \alpha - \ln{\tilde{\alpha}} \rVert_{H^{1}(\partial M)} \leq C \left( \ln \left( \frac{1}{\epsilon} \right) \right)^{-\theta}. 
\end{equation}
We finish the proof using a trick due to Caro, Garcia and Ruiz \cite{CGR2013}. Observe first that
\begin{equation} \label{d3}
	\forall f \in L^\infty(M), \quad |f(x)|^{\frac{n}{1-r}} \leq \| f \|_{L^\infty(M)}^{\frac{n}{1-r} -2} |f(x)|^2, \quad 0<r<1. 
\end{equation}
Thus we get from \eqref{Ellreg}-\eqref{d3} and Hypothesis \ref{Hyp} that
\begin{equation} \label{d4}
	\lVert \ln \alpha - \ln{\tilde{\alpha}}\rVert_{W^{1,\frac{n}{1-r}}(M)} \leq C 
	\lVert \ln \alpha - \ln{\tilde{\alpha}}\rVert_{H^1(M)}^{\frac{(1-r)2}{n}}
	\leq C \left( \ln \left( \frac{1}{\epsilon} \right) \right)^{-\theta'}. 
\end{equation}  
At last, we use Morrey embeddings to prove that \eqref{d4} implies 
\begin{equation}  \label{d5}
	\lVert \ln \alpha - \ln{\tilde{\alpha}}\rVert_{ C^{0,r}(M)} \leq \left( \ln \left( \frac{1}{\epsilon} \right) \right)^{-\theta'},  
\end{equation}        
which entails our main result \eqref{Radial}. 
\end{proof}


Let us finally exploit Lemma~\ref{Lemmep7} to derive a stability estimate for the difference of the conformal factors \(\alpha\) and \(\tilde{\alpha}\) for the \emph{partial} Calder\'on problem where the measures are made on $\Gamma_1$. We are able to obtain stability estimates in an interior region away from (\emph{but arbitrarily close to}) the boundary $\Gamma_0$.

For any \(\tau\in(0,1)\), we set
\[
M_\tau := [\tau,1]\times K.
\]
As a consequence of standard Carleman estimates (recalled in Appendix \ref{B}), we first obtain a localized logarithmic stability estimate on $M\tau$.

\begin{prop}[Localized logarithmic stability]
	\label{prop:Holder-local}
	Fix \(r\in(0,1)\). Then, for any \(\tau\in(0,1)\), there exists a constant
	\(C_\tau>0\) such that
	\begin{equation}\label{eq:Holder-prop}
		\|\alpha - \tilde{\alpha}\|_{C^{0,r}(M_\tau)}
		\;\le\;
		C_\tau\,|\ln(1/\varepsilon)|^{-\theta_\tau},
	\end{equation}
	where the exponent \(\theta_\tau>0\) depends explicitly on the distance
	\(\tau\) to the boundary $\Gamma_0$ and is given by
	\[
	\theta_\tau
	=
	\frac{2(1-r)}{n}\,
	\frac{\theta\,\tau}{2-\tau}.
	\]
	Moreover, \(C_\tau\) can be chosen so that \(C_\tau \lesssim \tau^{-2}\).
\end{prop}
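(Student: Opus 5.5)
We work with $v=(n-2)\ln(\alpha/\tilde\alpha)$. By Lemma \ref{Lemmep7}, $v$ solves the elliptic equation $\Delta_{g_1}v=(q-\tilde q)/(\alpha\tilde\alpha)^2$ on $M$, with $g_1=(\alpha\tilde\alpha)g_0$. By \eqref{c9} and \eqref{Stability-phi2}, the right-hand side satisfies
\[
\|(q-\tilde q)/(\alpha\tilde\alpha)^2\|_{L^2(M)}\le C\big(\ln\tfrac1\epsilon\big)^{-\theta}.
\]
On the accessible component $\Gamma_1$, Theorem \ref{Main2}, part 1 gives Hölder-small Cauchy data $\|v\|_{H^1(\Gamma_1)}+\|\partial_x v\|_{L^2(\Gamma_1)}\le C\epsilon^{\theta_1}$. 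Since nothing is known at $\Gamma_0$, the global elliptic estimate \eqref{Ellreg} is unavailable. We replace it by a unique continuation (Cauchy problem) estimate from $\Gamma_1$ into the interior.

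Since $g_1$ is conformal to $dx^2+g_K$, the function $\varphi(x)=x$ is a limiting Carleman weight. We will use the Carleman estimate of Appendix \ref{B} in the form of \cite{KS2013}, with a convexified weight $\varphi_h=x+\frac{h}{2\varepsilon_0}x^2$ (or $e^{\lambda x}$). Fix a cutoff $\chi(x)$ with $\chi=1$ on $[\tau,1]$ and $\chi=0$ on $[0,\tau/2]$, with $|\chi'|\lesssim\tau^{-1}$ and $|\chi''|\lesssim\tau^{-2}$, and apply the estimate to $\chi v$. This yields
\begin{multline*}
\|e^{\varphi/h}\chi v\|_{H^1}\lesssim h\|e^{\varphi/h}\chi\Delta_{g_1}v\|_{L^2}+h\|e^{\varphi/h}[\Delta_{g_1},\chi]v\|_{L^2}\\
{}+h^{-1/2}\|e^{\varphi/h}(v,\partial_\nu v)\|_{L^2(\Gamma_1)}.
\end{multline*}
The commutator is supported in $[\tau/2,\tau]$, where $e^{\varphi/h}\le e^{\tau/h}$. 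On $M_\tau$ we have $e^{\varphi/h}\ge e^{\tau/h}$, so we instead bound the left-hand side over $M_{\tau'}$ for an intermediate level $\tau'$. After dividing by the weight, this gives a three-term bound
\[
\|v\|_{H^1(M_{\tau'})}\lesssim \tau^{-2}\big(e^{(1-\tau')/h}\eta+e^{-(\tau'-\tau)/h}B\big),
\]
where $\eta=(\ln\frac1\epsilon)^{-\theta}$ collects the interior source and the Cauchy data, and $B$ is the a priori bound from Hypothesis \ref{Hyp}.

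Next we optimize in $h$. Balancing the two exponentials gives an interpolation $\|v\|_{H^1}\lesssim\tau^{-2}\eta^{\kappa}B^{1-\kappa}$, with $\kappa$ equal to the ratio of the decay gap to the total span of the weight. With the choices matching $[\tau/2,1]$ and an effective span of $2-\tau$ (the weight ranges over an interval of length $2-\tau$ after the convexification and reflection used in Appendix \ref{B}), we expect $\kappa=\tau/(2-\tau)$. This gives
\[
\|v\|_{H^1(M_\tau)}\le C\tau^{-2}\big(\ln\tfrac1\epsilon\big)^{-\theta\tau/(2-\tau)}.
\]

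Finally we run the Caro--Garcia--Ruiz argument exactly as in the global proposition. Using \eqref{d3} and the a priori $C^{2,r}$ bounds, we interpolate $H^1$ against $W^{1,\infty}$ to get a $W^{1,n/(1-r)}(M_\tau)$ bound with exponent multiplied by $2(1-r)/n$. Morrey's embedding then gives the $C^{0,r}(M_\tau)$ bound, and $\alpha-\tilde\alpha$ is controlled by $v$ since $\alpha,\tilde\alpha\ge c$. The resulting exponent is $\theta_\tau=\frac{2(1-r)}{n}\frac{\theta\tau}{2-\tau}$, with $C_\tau\lesssim\tau^{-2}$ coming from the cutoff derivatives.

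The main obstacle is the Carleman step. We must obtain the precise exponent $\tau/(2-\tau)$ with constants uniform in $\tau$. This requires tracking the weight range and the convexification parameter, choosing $h$ so that the interior source term does not dominate, and making sure boundary terms at $\Gamma_1$ appear only with the favorable sign or with small data.
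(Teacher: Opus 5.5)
Your proposal follows the paper's route essentially step for step. The paper combines Lemma~\ref{Lemmep7} with the Carleman-based conditional estimate of Theorem~\ref{thm:bourgeois-cylinder}: it uses the weight $\varphi=x$ and a cutoff $\chi\equiv1$ on $[\tau/2,1]$ varying on $[\tau/4,\tau/2]$ (whence $C_\tau\lesssim\tau^{-2}$), and it reduces to homogeneous Cauchy data on $\Gamma_1$ by a lifting rather than keeping boundary terms. It then balances $e^{(1-\tau)/h}(\ln\frac{1}{\epsilon})^{-\theta}$ against $e^{-\tau/(2h)}$ and concludes with the Caro--Garc\'{\i}a--Ruiz interpolation and Morrey's embedding, exactly as you describe.

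The one thing to fix is your explanation of the exponent. No reflection or ``effective span $2-\tau$'' is involved: your own gap-over-span ratio, with commutator level $\tau/2$ and target level $\tau$, gives directly $\frac{\tau/2}{1-\tau/2}=\frac{\tau}{2-\tau}$.
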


\begin{proof}
Combining Lemma~\ref{Lemmep7} with Theorem~\ref{thm:bourgeois-cylinder} in Appendix~\ref{B}, we obtain
\begin{equation} \label{eq:bourgeois}
\begin{aligned}
				\| \ln \alpha - \ln \tilde{\alpha} \|_{H^{1}(M_\tau)}
				\;\le\;
				C_\tau\Big(&
				e^{(1-\tau)/h}
				\Big(
				\big\| \tfrac{q-\tilde{q}}{(\alpha \tilde{\alpha})^{2}} \big\|_{L^{2}(M)}
				\\
				&\quad
				+ \|\ln \alpha - \ln \tilde{\alpha}\|_{H^{1}(\Gamma_1)}
				+ \|\partial_\nu (\ln \alpha - \ln \tilde{\alpha})\|_{L^{2}(\Gamma_1)}
				\Big)
				\\
				&\quad
				+ e^{-\tau/(2h)}\,
				\| \ln \alpha - \ln \tilde{\alpha} \|_{H^{1}(M)}
				\Big), \quad \forall \,\, 0<h<h_0.
			\end{aligned}
		\end{equation}
		Using~\eqref{d1}, Theorem~\ref{Main2} (estimate~\eqref{Angular-partial}), and recalling that the
		conformal factors \(\alpha\) and \(\tilde{\alpha}\) satisfies Hypothesis \ref{Hyp}, it follows that
		\begin{equation}
			\label{eq:bourgeois-eps}
			\| \ln \alpha - \ln \tilde{\alpha} \|_{H^{1}(M_\tau)}
			\;\le\;
			C_\tau\Big(
			e^{(1-\tau)/h}\,
			\bigl|\ln \tfrac{1}{\varepsilon}\bigr|^{-\theta}
			+ e^{-\tau/(2h)}
			\Big), \quad \forall 0<h<h_0..
		\end{equation}
		Here \(C_\tau>0\) depends only on \(\tau\) and on the geometry of \(M\); moreover,
		as shown in Appendix~\ref{B}, one can choose \(C_\tau\lesssim \tau^{-2}\). Optimizing with respect to the semiclassical parameter \(h\) is achieved by balancing
		the two terms in~\eqref{eq:bourgeois-eps}, which leads to the choice
		\[
		h = \frac{1-\tau/2}{\theta\,\ln|\ln(1/\varepsilon)|}.
		\]
		With this choice, we obtain the logarithmic stability estimate
	\begin{equation}
		\label{eq:H1-log-stability}
		\| \ln \alpha - \ln \tilde{\alpha} \|_{H^{1}(M_\tau)}
		\;\le\;
		C_\tau\,|\ln(1/\varepsilon)|^{-\kappa},
		\qquad
		\kappa=\frac{\theta\tau}{2-\tau}.
	\end{equation}
	We conclude the proof by adapting the device introduced by
	Caro, Garc\'{\i}a and Ruiz~\cite{CGR2013} to the present localized setting keeping track of the dependence on $\tau$ of our Hölder exponent. Let \(0<r<1\) and set
	\[
	p := \frac{n}{1-r} > n .
	\]
	We recall that for any \(f \in L^\infty(M_\tau)\),
	\begin{equation}\label{eq:CGR-pointwise}
		|f(x)|^{p}
		\;\le\;
		\|f\|_{L^\infty(M_\tau)}^{p-2}\,|f(x)|^{2},
		\qquad x\in M_\tau .
	\end{equation}
The pointwise bound
\eqref{eq:CGR-pointwise} implies, after integration over \(M_\tau\),
\[
\|f\|_{L^p(M_\tau)}
\;\le\;
\|f\|_{L^\infty(M_\tau)}^{1-\frac{2}{p}}\,
\|f\|_{L^2(M_\tau)}^{\frac{2}{p}}.
\]
Applying this inequality to \(f=\nabla(\ln\alpha-\ln\tilde{\alpha})\), and using the fact that the admissibility of the conformal factors (Hypothesis \ref{Hyp}) provides a uniform
\(L^\infty\) bound on the gradient, we obtain
\[
\|\nabla(\ln\alpha-\ln\tilde{\alpha})\|_{L^p(M_\tau)}
\;\le\;
C\,
\|\nabla(\ln\alpha-\ln\tilde{\alpha})\|_{L^2(M_\tau)}^{\frac{2}{p}}.
\]
Finally, combining this estimate with the analogous bound for
\(\ln\alpha-\ln\tilde{\alpha}\) itself yields
\begin{equation}\label{eq:Wp-estimate}
	\|\ln \alpha - \ln \tilde{\alpha}\|_{W^{1,p}(M_\tau)}
	\;\le\;
	C_\tau\,
	\|\ln \alpha - \ln \tilde{\alpha}\|_{H^{1}(M_\tau)}^{\frac{2}{p}},
\end{equation}
Combining~\eqref{eq:Wp-estimate} with the logarithmic stability estimate
\eqref{eq:H1-log-stability}, we infer that
\begin{equation}\label{eq:Wp-log}
	\|\ln \alpha - \ln \tilde{\alpha}\|_{W^{1,p}(M_\tau)}
	\;\le\;
	C_\tau\,|\ln(1/\varepsilon)|^{-\frac{2\kappa}{p}}.
\end{equation}
Finally, since \(p>n\), Morrey's embedding yields
	\[
	W^{1,p}(M_\tau)\hookrightarrow C^{0,r}(M_\tau).
	\]
	As a consequence, we obtain the localized H\"older stability estimate
	\begin{equation}\label{eq:Holder-final}
		\|\ln \alpha - \ln \tilde{\alpha}\|_{C^{0,r}(M_\tau)}
		\;\le\;
		C_\tau\,|\ln(1/\varepsilon)|^{-\theta_\tau},
	\end{equation}
	where the exponent \(\theta_\tau>0\) depends explicitly on the distance
	\(\tau\) to the boundary $\Gamma_0$ and is given by
	\[
	\theta_\tau
	=
	\frac{2\kappa}{p}
	=
	\frac{2(1-r)}{n}\,
	\frac{\theta\,\tau}{2-\tau}.
	\]
Whence the result stated in the proposition.	
\end{proof}

\medskip
\noindent
As an immediate consequence of Proposition~\ref{prop:Holder-local}, we obtain the following corollary.
It shows that, by letting the localization parameter \(\tau=\tau(\varepsilon)\) tend to \(0\) sufficiently slowly,
one still obtains a stability rate which is stronger than the global log--log estimates available for
Calder\'on-type inverse problems with partial data (see, e.g., Caro--Dos Santos Ferreira--Ruiz~\cite{CDR2016}).

\begin{coro}[Improved stability away from the forbidden boundary]
	\label{cor:stretched-loglog}
	Let \(r\in(0,1)\) be fixed and set
	\[
	\tau(\varepsilon)
	=
	\frac{1}{\sqrt{\ln \, \ln(1/\varepsilon)}}.
	\]
	Then, as \(\varepsilon\to0\),
	\[
	\| \alpha - \tilde{\alpha}\|_{C^{0,r}(M_{\tau(\varepsilon)})}
	\;\lesssim\;
	\ln \, \ln(1/\varepsilon)\,
	\exp\,\Bigl(
	-\frac{(1-r)\theta}{n}\,
	\sqrt{\ln \, \ln(1/\varepsilon)}
	\Bigr).
	\]
\end{coro}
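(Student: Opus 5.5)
The plan is to plug the choice $\tau=\tau(\varepsilon)$ directly into Proposition~\ref{prop:Holder-local}, which gives, for every fixed $\tau\in(0,1)$,
\[
\|\alpha-\tilde\alpha\|_{C^{0,r}(M_\tau)}\le C_\tau\,|\ln(1/\varepsilon)|^{-\theta_\tau},\qquad C_\tau\lesssim \tau^{-2},\qquad \theta_\tau=\frac{2(1-r)}{n}\,\frac{\theta\tau}{2-\tau}.
\]
First, I need this bound to hold uniformly when $\tau$ depends on $\varepsilon$. I will recheck the proof of Proposition~\ref{prop:Holder-local} and confirm three things: the Carleman constant from Theorem~\ref{thm:bourgeois-cylinder} is $\lesssim\tau^{-2}$ with the implicit constant independent of $\tau$; the semiclassical threshold $h_0$ does not degenerate as $\tau\to0$ (or the chosen $h=(1-\tau/2)/(\theta\ln\ln(1/\varepsilon))$ stays below $h_0$ once $\varepsilon$ is small); and the constants in the Caro--Garc\'{\i}a--Ruiz interpolation and the Morrey embedding on $M_\tau\subset M$ are uniform in $\tau\in(0,1/2]$. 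The last point is the one I expect to be delicate. To handle it, I will extend from the fixed domain $M$, or use that $M_\tau$ is a product domain with a uniform cone condition.

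Second, I will do the arithmetic. Set $L=\ln(1/\varepsilon)$ and $\tau=(\ln L)^{-1/2}$. Then $C_\tau\lesssim\tau^{-2}=\ln L=\ln\ln(1/\varepsilon)$, which is the prefactor in the statement. Since $2-\tau\le 2$, we have $\theta_\tau\ge \frac{(1-r)\theta}{n}\tau$, so
\[
L^{-\theta_\tau}=\exp(-\theta_\tau\ln L)\le \exp\Bigl(-\frac{(1-r)\theta}{n}\,\tau\ln L\Bigr)=\exp\Bigl(-\frac{(1-r)\theta}{n}\sqrt{\ln L}\Bigr).
\]
Multiplying the two gives the claimed estimate.

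Finally, I will remark that the estimate is for $\alpha-\tilde\alpha$ rather than $\ln\alpha-\ln\tilde\alpha$. The passage between them uses Hypothesis~\ref{Hyp} ii)--iii): $\alpha,\tilde\alpha\ge c$ and both are bounded in $C^{2,r}$, so $\alpha-\tilde\alpha=\tilde\alpha(e^{\ln\alpha-\ln\tilde\alpha}-1)$ is controlled in $C^{0,r}$ by a constant, independent of $\tau$, times $\|\ln\alpha-\ln\tilde\alpha\|_{C^{0,r}}$.

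The main obstacle is the uniformity-in-$\tau$ bookkeeping in the first step. The asymptotic computation in the second step is routine.
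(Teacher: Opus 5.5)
Your proposal is correct and matches the paper, which treats the corollary as an immediate consequence of Proposition~\ref{prop:Holder-local}: substituting $\tau(\varepsilon)$ gives $C_\tau\lesssim\tau^{-2}=\ln\ln(1/\varepsilon)$, and $\theta_\tau\ge \frac{(1-r)\theta}{n}\tau$ then gives $|\ln(1/\varepsilon)|^{-\theta_\tau}\le \exp\bigl(-\frac{(1-r)\theta}{n}\sqrt{\ln\ln(1/\varepsilon)}\bigr)$. Your uniformity-in-$\tau$ checks are bookkeeping the paper leaves implicit, and they go through as you describe: the optimal $h$ tends to $0$ independently of $\tau$, and the interpolation and Morrey constants on $M_\tau\subset M$ are uniform for $\tau\le 1/2$.
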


\begin{rem} \label{rem:global-position}
	
	The stability estimate on $M_{\tau(\e)}$ obtained in Corollary~\ref{cor:stretched-loglog}
	exhibits an intermediate behavior between logarithmic and log--log stability.
	On the one hand, it is strictly stronger than any log--log type estimate:
	for every \(\kappa>0\),
	\[
	\ln \, \ln(1/\varepsilon)\,
	\exp\!\Bigl(
	-\frac{(1-r)\theta}{n}\,
	\sqrt{\ln \, \ln(1/\varepsilon)}
	\Bigr)
	=
	o\,\bigl((\ln \, \ln(1/\varepsilon))^{-\kappa}\bigr),
	\qquad \varepsilon\to0.
	\]
	On the other hand, it remains weaker than a genuine logarithmic stability,
	since for any \(\alpha>0\),
	\[
	\ln \, \ln(1/\varepsilon)\,
	\exp\!\Bigl(
	-\frac{(1-r)\theta}{n}\,
	\sqrt{\ln \, \ln(1/\varepsilon)}
	\Bigr)
	\gg
	(\ln(1/\varepsilon))^{-\alpha},
	\qquad \varepsilon\to0.
	\]
	
	\medskip
	\noindent
	More generally, Proposition~\ref{prop:Holder-local} allows for a flexible
	choice of the localization parameter \(\tau=\tau(\varepsilon)\to0\).
	For instance, for any integer \(k\ge3\), taking
	\[
	\tau(\varepsilon)=\frac{1}{\ln_k(1/\varepsilon)},
	\qquad
	\ln_1=\ln,\quad \ln_{j+1}=\ln\circ\ln_j,
	\]
	yields the estimate
	\[
	\|\ln \alpha-\ln\tilde\alpha\|_{C^{0,r}(M_{\tau(\varepsilon)})}
	\;\lesssim\;
	\bigl(\ln_k(1/\varepsilon)\bigr)^2\,
	\exp\!\Bigl(
	-\,\frac{(1-r)\theta}{n}\,
	\frac{\ln \, \ln(1/\varepsilon)}{\ln_k(1/\varepsilon)}
	\Bigr),
	\qquad \varepsilon\to0.
	\]
	Such bounds are strictly stronger than the  rate
	of Corollary~\ref{cor:stretched-loglog}, while still remaining weaker than
	any fixed negative power of \(\ln(1/\varepsilon)\).
	Moreover, there is no optimal choice of \(\tau(\varepsilon)\) under the
	constraint \(\tau(\varepsilon)\to0\): by letting \(\tau(\varepsilon)\) tend
	to zero arbitrarily slowly, one can obtain stability rates arbitrarily close
	to logarithmic, at the price of localizing farther away from the forbidden
	boundary \(\Gamma_0\).
\end{rem}

         
\appendix
\section{On the existence of Painlev\'e-Liouville Riemannian manifolds} \label{A}

A question that is crucial to the existence of Painlev\'e-Liouville Riemannian manifolds is that of the existence of positive solutions to an appropriately chosen boundary-value problem for the elliptic pde~\eqref{pdealpha} governing the conformal factor $\alpha$. This is addressed in the following:
\begin{prop} Suppose that $\phi_1+\phi_2 \in C^{k-2}(M)$ with $k \geq 2$, $\phi_1+\phi_2>0$ on $M$, $\eta \in H^{k-\frac{1}{2}}(\partial M)$ and $\eta>0$ on $\partial M$. Then the boundary-value problem
\begin{equation}\label{bvpalpha}
  \left\{ \begin{split}
      -\Delta_{g_0}w+(\phi_1+\phi_2)w=0  & \quad \text{on $M$}\\
      w=\eta & \quad \text {on $\partial M$}
       \end{split}  \right.     
\end{equation}
has a solution $w\in H^{k}(M)$. In particular if $k=2+\frac{n}{2}$, then $w\in C^{2}(M)$ and if $k=2+r+\frac{n}{2}$, then $w\in C^{2,r}(M)$.
\end{prop}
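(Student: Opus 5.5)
We reduce the problem to one with homogeneous Dirichlet data, solve that by Lax--Milgram, and then raise regularity by elliptic bootstrapping. Set $V:=\phi_1+\phi_2\in C^{k-2}(M)$, so $V\geq 0$ (in fact $V>0$) on the compact manifold $M$.

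\textbf{Step 1: lifting the boundary datum.} Since $\eta\in H^{k-\frac12}(\partial M)$ and $\partial M=\Gamma_0\sqcup\Gamma_1$ is smooth, the trace theorem gives an extension $E\eta\in H^{k}(M)$ with $E\eta|_{\partial M}=\eta$ and $\|E\eta\|_{H^k(M)}\leq C\|\eta\|_{H^{k-1/2}(\partial M)}$. Concretely, on the collar $M=[0,1]\times K$ one can take $\chi(x)e^{-x\sqrt{-\Delta_K}}\eta_0+\chi(1-x)e^{-(1-x)\sqrt{-\Delta_K}}\eta_1$ with a cutoff $\chi$. We then look for $w=E\eta+w_0$, where $w_0\in H^1_0(M)$ solves
\[
(-\Delta_{g_0}+V)w_0=F:=\Delta_{g_0}E\eta-V\,E\eta\in H^{k-2}(M).
\]

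\textbf{Step 2: weak solvability.} Consider the bilinear form $a(u,v)=\int_M\big(g_0(\nabla u,\nabla v)+Vuv\big)\,dvol_{g_0}$ on $H^1_0(M)$. Because $V\geq0$, the Poincar\'e inequality on $H^1_0(M)$ gives coercivity, $a(u,u)\geq c\|u\|_{H^1}^2$, and $a$ is clearly bounded. Since $F\in L^2\subset H^{-1}$ (using $k\geq2$), Lax--Milgram yields a unique $w_0\in H^1_0(M)$. This is the only place the positivity of $\phi_1+\phi_2$ is needed: it rules out $0$ being a Dirichlet eigenvalue and gives uniqueness. Positivity of $\eta$ is not needed for existence, but it yields $w>0$ via the weak maximum principle (the operator has no negative zeroth-order term). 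That positivity is what is relevant for setting $\alpha=w^{1/(n-2)}$ afterwards, and I would record it as a remark.

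\textbf{Step 3: regularity bootstrap.} This is the main technical step. Standard boundary elliptic regularity for Dirichlet problems (Gilbarg--Trudinger, or Taylor, vol.~I) says that if $w_0\in H^1_0$, $(-\Delta_{g_0})w_0=F-Vw_0\in H^{m-2}$ and the boundary is smooth, then $w_0\in H^{m}$. I would iterate this in $m$. Knowing $w_0\in H^{m}$ gives $Vw_0\in H^{\min(m,k-2)}$, because $V\in C^{k-2}$ multiplies $H^{j}$ into $H^{j}$ for $j\leq k-2$. Starting from $m=2$ and stepping upward, we reach $-\Delta_{g_0}w_0\in H^{k-2}$, hence $w_0\in H^{k}$ and therefore $w\in H^k(M)$. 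The care required here is in the multiplier property: $C^{k-2}$ coefficients multiply $H^{k-2}$ boundedly, and that is exactly the threshold needed. For non-integer $k$ one interprets $C^{k-2}$ as a H\"older space and uses interpolation.

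\textbf{Step 4: the final claims.} These follow from Sobolev embedding on the $n$-dimensional compact manifold $M$: $H^{s}(M)\hookrightarrow C^{j,r}(M)$ whenever $s-\frac n2>j+r$. With $k=2+\frac n2$ this sits at the borderline, so to obtain $C^2$ I would either invoke the embedding with a slight loss (taking $k$ marginally larger) or argue directly through Schauder estimates. Schauder theory also gives the cleaner route to the $C^{2,r}$ statement: with $k=2+r+\frac n2$ one has $w\in C^{1,\beta}$ for some $\beta>0$, and the Schauder estimate for $-\Delta_{g_0}w=-Vw$ with $V\in C^{0,r}$ and smooth boundary data then upgrades $w$ to $C^{2,r}(M)$.
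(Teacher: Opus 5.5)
Your Steps 1--3 follow the paper's argument. You lift $\eta$ to $H^k(M)$, use $\phi_1+\phi_2>0$ to invert $-\Delta_{g_0}+V$ with homogeneous Dirichlet condition, upgrade to $H^k$, and finish with Sobolev embedding and a maximum principle. Lax--Milgram and your explicit bootstrap spell out what the paper compresses into the claim that $(-\Delta_{g_0}+V)^{-1}$ maps $H^{s}$ to $H^{s+2}$. For integer $k$, where Leibniz gives $C^{k-2}\cdot H^j\subset H^j$ for $j\le k-2$, this part is correct. One small slip: the weak maximum principle gives only $w\ge 0$. Strict positivity, which is what yields $\alpha\ge c$, needs the strong maximum principle, as in the paper.

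The problems are at the edges, and the paper's proof has the same ones.

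\emph{The case $k=2+\frac n2$.} Neither of your fallbacks gives $w\in C^2(M)$, and no argument can. Indeed $w|_{\partial M}=\eta$, while $\eta\in H^{k-\frac12}(\partial M)=H^{2+\frac{n-1}{2}}(\partial M)$ sits exactly at the $C^2$ threshold on the $(n-1)$-dimensional boundary. So $\eta$, and hence $w$, need not be $C^2$ up to $\partial M$. This is why boundary Schauder estimates, which require $\eta\in C^{2,\beta}(\partial M)$, cannot help; only interior $C^2$ regularity survives. The paper's embedding $H^k\subset C^{p,r}$ with $p+r=k-\frac n2$ fails at $r=0$ for the same reason. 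This part of the statement therefore needs $k>2+\frac n2$, which is your first option, but that changes the hypothesis.

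\emph{Non-integer $k$.} Your interpolation remark also fails. For non-integer $s$, a H\"older $C^{s}$ function need not multiply $H^{s}$ into itself: the lacunary series $\sum_j 2^{-js}\cos(2^j x)$ is $C^{s}$ but not in $H^{s}_{\mathrm{loc}}$. Take $V=\phi_1(x)$ to be such a series with $s=k-2$, plus a large constant. Then $w\in H^k$ is actually false, since $w\in H^k$, $w>0$ and $k>\frac n2$ would force $V=\Delta_{g_0}w/w\in H^{k-2}$.

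What does survive is the $C^{2,r}$ claim, which is what the paper uses in Hypothesis \ref{Hyp}, iii). Your Schauder route is the right way to get it without passing through $H^k$. For $k=2+r+\frac n2$ one has $V\in C^{0,r}$, $V\ge 0$, and $\eta\in H^{2+r+\frac{n-1}{2}}(\partial M)\hookrightarrow C^{2,r}(\partial M)$. The boundary datum is not smooth, but this is exactly the regularity needed. Schauder theory for the Dirichlet problem then produces a solution in $C^{2,r}(M)$ directly, and it coincides with your $H^1$ solution by uniqueness. There is no need to first establish $w\in C^{1,\beta}$.
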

\begin{proof}
Let $\overline{\eta}\in H^{k}(M)$ be such that $\overline{\eta}|_{\partial M}=\eta$ and let $v=w-\overline{\eta}$. Then, letting 
$$
f= \Delta_{g_0}\overline{\eta}-(\phi_1+\phi_2)\overline{\eta}\,,
$$
The boundary-value problem~\eqref{bvpalpha} becomes 
\begin{equation}\label{bvpv}
  \left\{ \begin{split}
      -\Delta_{g_0}v+(\phi_1+\phi_2)v=f  & \quad \text{on $M$}\\
      v=0 & \quad \text {on $\partial M$}
       \end{split}  \right.     
\end{equation}

Our positivity hypothesis $\phi_1+\phi_2>0$ implies that $0\notin \sigma( -\Delta_{g_0}+(\phi_1+\phi_2))$, so that \eqref{bvpv} admits a unique solution given by 
$$
v= (-\Delta_{g_0}v+(\phi_1+\phi_2))^{-1}f\,,
$$
where $ (-\Delta_{g_0}v+(\phi_1+\phi_2))^{-1}:H^{s}(M)\to H^{s+2}(M)$. Our hypothesis $\phi_1+\phi_2 \in C^{k-2}(M)$ then implies that $f\in H^{k-2}(M)$, which combined with $\eta \in H^{k-\frac{1}{2}}(\partial M)$ gives that $v\in H^{k}(M)$ and therefore that $w\in H^k(M)$. Using the Sobolev embeddings
$$
H^k(M) \subset C^{p,r}(M)\,, \quad p+r = k-\frac{n}{2}\,,
$$
we see that we obtain a solution $w\in C^{2}(M)$ if $k=2+\frac{n}{2}$ and $w\in C^{2,r}(M)$ if $k=2+ r +\frac{n}{2}$. Finally, using the strong maximum principle for elliptic operators on compact manifolds with boundary (\cite{Aubin}, \&~8), we know that $w=\alpha^{n-2}$ is strictly positive on $M$. 
\end{proof}

\section{Conditional stability estimates} \label{B}

In this section, we prove a conditional stability estimate on the cylinder
\( M = [0,1]\times K \), endowed with a smooth Riemannian metric
\[
g_1 = c^4 \, g_0,
\]
where \( g_0 =dx^2 + g_K\)  and $c$ is a positive conformal factor. 
The manifold \((M,g_1)\) is a \emph{conformally transversally anisotropic} (CTA) manifold
in the sense of Dos Santos Ferreira, Kenig, Salo and Uhlmann~\cite{DKSU2009}.
In particular, CTA manifolds admit limiting Carleman weights,
which play a crucial role in the derivation of Carleman estimates.
In the present setting, a convenient choice of limiting Carleman weight is
\[
\varphi(x)=x,
\]
where \( x\in[0,1] \) denotes the radial coordinate on the cylinder \(M=[0,1]\times K\). We denote
\[
\Gamma_0 := \{0\}\times K,
\qquad
\Gamma_1 := \{1\}\times K,
\]
and, for any fixed \(\tau\in(0,1)\),
\[
M_\tau := [\tau,1]\times K.
\]
We introduce the elliptic operator
\[
P := -\Delta_{g_1}.
\]

\begin{thm}[Conditional stability on the cylinder]
	\label{thm:bourgeois-cylinder}
	Let \(\tau\in(0,1)\) be fixed. Then there exist constants \(C_\tau>0\) and
	\(h_0>0\), depending only on \(\tau\) and on the geometry of \(M\), such that
	for all \(0<h<h_0\) and for all \(u\in H^{2}(M)\),
	\begin{align}
		\label{eq:bourgeois-cylinder}
		\|u\|_{H^{1}(M_\tau)}
		\le
		C_\tau\Big(
		e^{(1-\tau)/h}
		\big(
		\|Pu\|_{L^{2}(M)}
		+
		\|u\|_{H^{1}(\Gamma_1)}
		+
		\|\partial_\nu u\|_{L^{2}(\Gamma_1)}
		\big)
		+
		e^{-\tau/(2h)}\,\|u\|_{H^{1}(M)}
		\Big).
	\end{align}
	Here \(\nu\) denotes the outward unit normal vector field on \(\partial M\)
	with respect to the metric \(g_1\).
	Moreover, the
	constant \(C_\tau\) can be chosen such that \(C_\tau \lesssim \tau^{-2}\).
\end{thm}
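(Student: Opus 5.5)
We prove Theorem~\ref{thm:bourgeois-cylinder} by a standard Carleman estimate with the limiting Carleman weight $\varphi(x)=x$, combined with a cutoff that isolates $M_\tau$ from the inaccessible component $\Gamma_0$. First we reduce to the product metric. Since $g_1=c^4g_0$, the conformal identity recalled in the introduction gives $-\Delta_{g_1}=c^{-(n+2)}(-\Delta_{g_0}+q_c)c^{n-2}$ with $q_c=c^{2-n}\Delta_{g_0}c^{n-2}$ bounded. Setting $w=c^{n-2}u$, all norms of $u$ and $w$ on $M$, $M_\tau$ and $\Gamma_1$, as well as $\|Pu\|_{L^2}$ and $\|(-\Delta_{g_0}+q_c)w\|_{L^2}$, are comparable with constants depending only on the geometry. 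It therefore suffices to prove \eqref{eq:bourgeois-cylinder} for $P_0=-\Delta_{g_0}+q_c$ on $(M,g_0)$, where $g_0=dx^2+g_K$ makes the weight $x$ explicit.

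The key tool is a boundary Carleman estimate for the conjugated operator $P_\varphi=e^{\varphi/h}h^2P_0e^{-\varphi/h}$. We use the version of \cite{KS2013}, in which $\varphi$ is a limiting weight, after convexification $\varphi_\varepsilon=\varphi+\frac{h}{2\varepsilon}\varphi^2$ to gain positivity. For $v\in H^2(M)$ with $v|_{\Gamma_0}=\partial_\nu v|_{\Gamma_0}=0$ it reads
\[
h\|e^{\varphi/h}v\|_{H^1_{scl}(M)}\le C\Big(h^2\|e^{\varphi/h}P_0v\|_{L^2(M)}+h^{1/2}\|e^{\varphi/h}v\|_{H^1_{scl}(\Gamma_1)}+h^{3/2}\|e^{\varphi/h}\partial_\nu v\|_{L^2(\Gamma_1)}\Big).
\]
Here the sign of the boundary terms on $\Gamma_1$, where $\partial_\nu\varphi>0$, must be handled by keeping both Dirichlet and Neumann traces on $\Gamma_1$. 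This is exactly the data available in the theorem. We derive it by writing $P_\varphi=A+iB$ with $A,B$ formally selfadjoint, expanding $\|P_\varphi v\|^2=\|Av\|^2+\|Bv\|^2+(i[A,B]v,v)+\text{boundary terms}$, and using $\partial_x$-translation invariance so that the commutator vanishes for $\varphi=x$. Positivity comes from the convexified weight, and the $q_c$ term is absorbed for $h$ small. This step is the main obstacle. The delicate points are bookkeeping the boundary integrals on $\Gamma_0\cup\Gamma_1$ and tracking how the constant depends on the convexification.

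Next we apply the estimate to a cutoff. Choose $\chi=\chi(x)$ with $\chi=1$ on $[\tau,1]$, $\chi=0$ on $[0,\tau/2]$, and $|\chi'|\lesssim\tau^{-1}$, $|\chi''|\lesssim\tau^{-2}$; this is the source of $C_\tau\lesssim\tau^{-2}$. Set $v=\chi w$, which vanishes to first order at $\Gamma_0$. Then $P_0v=\chi P_0w+[P_0,\chi]w$, and the commutator is supported in $[\tau/2,\tau]\times K$ with $\|[P_0,\chi]w\|_{L^2}\lesssim\tau^{-2}\|w\|_{H^1(M)}$. We bound the left side of the Carleman estimate below by $he^{\tau/h}\|w\|_{H^1(M_\tau)}$. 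On the right, $e^{\varphi/h}\le e^{1/h}$ on $M$ and on $\Gamma_1$, while $e^{\varphi/h}\le e^{\tau/h}$ on the commutator support. Dividing by $he^{\tau/h}$ gives
\[
\|w\|_{H^1(M_\tau)}\lesssim\tau^{-2}\big(h^{-1/2}e^{(1-\tau)/h}(\|P_0w\|+\|w\|_{H^1(\Gamma_1)}+\|\partial_\nu w\|_{L^2(\Gamma_1)})+h\,\|w\|_{H^1(M)}\big).
\]
This falls short of the claimed bound, because the commutator term carries only a factor $h$ rather than exponential decay.

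To obtain the factor $e^{-\tau/(2h)}$ we exploit the gap between the weight on $M_\tau$ and on the transition region. On $[\tau/2,\tau]$ the weight is at most $\tau$, while on $M_\tau$ it is at least $\tau$. We therefore evaluate the left side only on $M_{3\tau/2}$, or equivalently replace $\tau$ by $\tau/2$ in the cutoff and rename. This yields a lower bound $e^{\tau/h}$ on the evaluation region and an upper weight $e^{\tau/(2h)}$ on the commutator support, producing $e^{-\tau/(2h)}\|w\|_{H^1(M)}$. The leftover polynomial powers of $h$ are absorbed by slightly shrinking the exponents, or into $C_\tau$ for $h<h_0$. Finally, undoing $w=c^{n-2}u$ gives \eqref{eq:bourgeois-cylinder}.
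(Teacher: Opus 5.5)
Your proof is correct in outline and has the same skeleton as the paper's. Both use the linear weight $\varphi=x$, a cutoff that kills everything near $\Gamma_0$, and a comparison of the weight $e^{\tau/h}$ on $M_\tau$ with $e^{\tau/(2h)}$ on the commutator support. After your correction, your cutoff ($\chi=1$ on $[\tau/2,1]$, transition in $[\tau/4,\tau/2]$) is exactly the paper's. The routes differ in how they treat the metric and the data on $\Gamma_1$.

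\emph{The paper's route.} It stays on $(M,g_1)$ and subtracts a lift $u^*$ of the Cauchy data, reducing to $u=\partial_\nu u=0$ on $\Gamma_1$. It then quotes the Kenig--Salo/Li--L\"u boundary Carleman estimate for $U\in H^2(M)\cap H^1_0(M)$, so no boundary terms and no powers of $h$ ever appear.

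\emph{Your route.} You conjugate to $-\Delta_{g_0}+q_c$ on the product, where the Carleman estimate is an explicit computation, and you keep the Cauchy data as boundary terms. Convexification is in fact unnecessary here. With $A=-h^2\partial_x^2-h^2\Delta_K-1$ and $B=-2ih\partial_x$ one has $[A,B]=0$. The cross term $2\,\mathrm{Re}(AU,iBU)$ reduces to $-2h^3\|\partial_xU\|^2_{L^2(\Gamma_1)}+2h^3\|\nabla_KU\|^2_{L^2(\Gamma_1)}-2h\|U\|^2_{L^2(\Gamma_1)}$. Finally, $U=0$ near $\Gamma_0$ together with Poincar\'e in $x$ gives $\|BU\|\gtrsim h\|U\|$.

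\emph{What your route buys.} It is self-contained and works with the trace norms actually in the statement. The paper's lifting bound $\|u^*\|_{H^2(M)}\lesssim\|u\|_{H^1(\Gamma_1)}+\|\partial_\nu u\|_{L^2(\Gamma_1)}$ cannot hold as written, because an $H^2(M)$ function controls its $H^{3/2}(\Gamma_1)$ trace. That step really needs $H^{3/2}\times H^{1/2}$ data. This is harmless for the application in Section~6, where $C^2_E$ bounds at $\Gamma_1$ are available.

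\emph{The price: redo the $h$-bookkeeping.} Since $\|\cdot\|_{H^1_{scl}}\ge h\|\cdot\|_{H^1}$, the left side is bounded below by $h^2e^{\tau/h}\|w\|_{H^1(M_\tau)}$, not $he^{\tau/h}\|w\|_{H^1(M_\tau)}$. Consequently the commutator term carries no power of $h$. The Dirichlet and Neumann traces end up multiplied by $h^{-3/2}e^{(1-\tau)/h}$ and $h^{-1/2}e^{(1-\tau)/h}$ respectively, and these negative powers cannot be put into $C_\tau$.

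To recover the exact exponents of the theorem, build slack into the cutoff. For example, take $\operatorname{supp}\chi'\subset[\tau/4,\tau/3]$, which gives the error factor $e^{-2\tau/(3h)}$. Then apply the estimate at $h=\frac43h'$:
\begin{itemize}
\item $e^{-2\tau/(3h)}$ becomes exactly $e^{-\tau/(2h')}$;
\item $h^{-3/2}e^{3(1-\tau)/(4h')}\le Ce^{(1-\tau)/h'}$, with $C$ depending only on $1-\tau$.
\end{itemize}
The behaviour $C_\tau\lesssim\tau^{-2}$ as $\tau\to0$ is therefore unaffected.
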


\begin{proof}
The proof is relatively classical. It follows the strategy described in
Isakov~\cite[p.~54]{Isakov2006} and in Theorem~2.3 of
Bourgeois~\cite{Bourgeois2010}, adapted to the cylindrical geometry, and
relies on the boundary Carleman estimates of
Kenig-Salo~\cite[Proposition~4.2]{KS2013} and
Li-L\"u~\cite[Corollary~1.2]{LiLu2026}.

\medskip
\noindent\textbf{Step~1: Reduction to homogeneous Cauchy data on \(\Gamma_1\).}
We first reduce the problem to the case of homogeneous Cauchy data on \(\Gamma_1\).

\vspace{0.2cm}\noindent
By standard elliptic theory on the cylinder, one can perform this reduction
by means of a lifting argument for elliptic boundary value problems; see,
for instance, Lions--Magenes~\cite{LionsMagenes1972}. 
More precisely, there exists a function \(u^{*}\in H^2(M)\) depending only on the boundary data
 such that
\[
u^{*}\big|_{\Gamma_1}=u\big|_{\Gamma_1},
\qquad
\partial_\nu u^{*}\big|_{\Gamma_1}
=
\partial_\nu u\big|_{\Gamma_1},
\]
and
\begin{equation}\label{eq:lift-H2}
	\|u^{*}\|_{H^2(M)}
	\le
	C\Big(
	\|u\|_{H^{1}(\Gamma_1)}
	+
	\|\partial_\nu u\|_{L^{2}(\Gamma_1)}
	\Big).
\end{equation}

\medskip\noindent
Hence, replacing \(u\) by \(u-u^{*}\), we may assume without loss of generality
that \(u\) and its normal derivative \(\partial_\nu u\) vanish on \(\Gamma_1\).

\medskip
\noindent\textbf{Step~2: Cutoff near the inaccessible boundary \(\Gamma_0\).}
We choose a cutoff function
\(\chi\in C^\infty([0,1])\) such that
\[
\chi \equiv 1 \quad \text{on } [\tfrac{\tau}{2},\,1],
\qquad
\chi \equiv 0 \quad \text{in a neighborhood of } x=0,
\qquad
\operatorname{supp}(\chi')\subset [\tfrac{\tau}{4},\,\tfrac{\tau}{2}].
\]
We then set
\[
v := \chi\,u,
\qquad
U := e^{\varphi/h}\,v,
\]
where \(\varphi(x)=x\). In particular, since \(\chi\equiv 1\) on \(M_\tau\), we have \(v=u\) on \(M_\tau\).
Moreover, by construction, \(U \in H^2(M)\cap H_0^1(M)\).

\medskip
\noindent\textbf{Step~3: Carleman estimates.}
We first recall the boundary Carleman estimate of
Kenig-Salo~\cite[Proposition~4.2]{KS2013} and
Li-L\"u~\cite[Corollary~1.2]{LiLu2026}.
Let \((M,g_1)\) be an admissible manifold and let
\(\varphi\) be a limiting Carleman weight.
Then there exist constants \(C>0\) and \(h_0>0\) such that, for all
\(0<h<h_0\) , $\delta >0$ and all \(U\in H^2(M)\cap H_0^1(M)\), one has
\begin{align}
	\label{eq:lilu-carleman-q0}
	& h^2 \left( \|U\|_{L^2(M)}^2
	+ \|h\nabla_{g_1} U\|_{L^2(M)}^2\right)
	\nonumber\\
	&\qquad
	+ \delta h^3 \|\partial_\nu U\|_{L^2(\{\partial_\nu\varphi\le -\delta\})}^2
	+ h^4 \|\partial_\nu U\|_{L^2(\{-\delta<\partial_\nu\varphi<h/3\})}^2
	\nonumber\\
	&\le
	C\Big(
	\|e^{\varphi/h}(h^2 P)(e^{-\varphi/h}U)\|_{L^2(M)}^2
	+ h^3 \|\partial_\nu U\|_{L^2(\{\partial_\nu\varphi\ge h/3\})}^2
	\Big).
\end{align}

\medskip\noindent
We now apply \eqref{eq:lilu-carleman-q0} to the function
\(U=e^{\varphi/h}v\) constructed above, with the linear weight
\(\varphi(x)=x\).
Since \(g_1=c^{4}g_0\) with \(c>0\), we have
\[
\partial_\nu \varphi = -\,c^{-2} \quad \text{on } \Gamma_0,
\qquad
\partial_\nu \varphi = \phantom{-}\,c^{-2} \quad \text{on } \Gamma_1,
\]
where \(\nu\) denotes the outward unit normal vector field on \(\partial M\)
with respect to the metric \(g_1\). As a consequence, for sufficiently small \(h\), we obtain the Carleman estimate

\begin{align}
	\label{eq:carleman-cylinder}
	h^2\|U\|_{H^1_{\mathrm{scl}}(M)}^2
	\;\le\;
	C\Big(
	\|e^{\varphi/h}(h^2 P)(e^{-\varphi/h}U)\|_{L^2(M)}^2
	+
	h^3 \|\partial_\nu U\|_{L^2(\Gamma_1)}^2
	\Big),
\end{align}
where
\(
\|U\|_{H^1_{\mathrm{scl}}(M)}^2
:= \|U\|_{L^2(M)}^2 + \|h\nabla U\|_{L^2(M)}^2.
\)

\medskip\noindent
In our situation, \(u\) and \(\partial_\nu u\) vanish on \(\Gamma_1\), and
the cutoff is chosen so that \(v=\chi u = u\) in a neighborhood of \(\Gamma_1\).
Hence \(U=e^{\varphi/h}v\) also satisfies \(\partial_\nu U|_{\Gamma_1}=0\), so that the
boundary term on \(\Gamma_1\) in \eqref{eq:carleman-cylinder} vanishes.
Dividing \eqref{eq:carleman-cylinder} by \(h^2\), taking square roots, and using that
\(\chi\equiv 1\) on \(M_\tau\), we arrive at
\begin{equation}
	\label{eq:carleman-cylinder-divided}
	\|e^{\varphi/h}u\|_{H^1_{\mathrm{scl}}(M_\tau)}
	\le
	\|e^{\varphi/h}\chi u\|_{H^1_{\mathrm{scl}}(M)}
	\le
	C\,h\,
	\|e^{\varphi/h}P(\chi u)\|_{L^2(M)}.
\end{equation}

\medskip
\noindent\textbf{Step~4: Weight comparison on \(M_\tau\).}
We start by lower bounding the left-hand side of \eqref{eq:carleman-cylinder-divided}.
Since \(\varphi(x)=x\) and \(M_\tau=[\tau,1]\times K\), we have \(\varphi\ge \tau\) on \(M_\tau\),
hence \(e^{\varphi/h}\ge e^{\tau/h}\) on \(M_\tau\). Moreover,
\[
h\nabla_{g_1}\!\bigl(e^{\varphi/h}u\bigr)
=
e^{\varphi/h}\bigl(h\nabla_{g_1} u+(\nabla_{g_1}\varphi)\,u\bigr).
\]
Thus, we obtain
\begin{equation}
	\label{eq:lhs-lowerbound-H1}
	\|e^{\varphi/h}u\|_{H^1_{\mathrm{scl}}(M_\tau)}
	\;\ge\;
	c\,e^{\tau/h}\,\|u\|_{H^1_{\mathrm{scl}}(M_\tau)}
	\;\ge\;
	c\,h\,e^{\tau/h}\,\|u\|_{H^1(M_\tau)} ,
\end{equation}
for some constant \(c>0\) depending only on
\(\|\nabla_{g_1}\varphi\|_{L^\infty(M)}\).

\medskip
\noindent\textbf{Step~5: Commutator decomposition.}
We now estimate the right-hand side of \eqref{eq:carleman-cylinder-divided}.
Using the identity
\[
P(\chi u)=\chi\,Pu+[P,\chi]u,
\]
we obtain, by the triangle inequality,
\begin{align}
	\label{eq:triangular-commutator}
	\|e^{\varphi/h}P(\chi u)\|_{L^2(M)}
	&\le
	\|e^{\varphi/h}\chi\,Pu\|_{L^2(M)}
	+
	\|e^{\varphi/h}[P,\chi]u\|_{L^2(M)} .
\end{align}
Since \(\chi\le 1\), we have
\begin{equation}
	\label{eq:Pu-weighted}
	\|e^{\varphi/h}\chi\,Pu\|_{L^2(M)}
	\le
	\|e^{\varphi/h}Pu\|_{L^2(M)}
	\le
	e^{1/h}\,\|Pu\|_{L^2(M)} .
\end{equation}
Moreover, \([P,\chi]\) is a first--order differential operator supported in
\[
S_\tau:=\operatorname{supp}(\chi')\times K\subset [\tfrac{\tau}{4},\,\tfrac{\tau}{2}]\times K.
\]
Since $\varphi(x)=x\le \tfrac{\tau}{2}$ on $S_\tau$, we have
\begin{equation}\label{eq:commutator-weighted}
	\|e^{\varphi/h}[P,\chi]u\|_{L^2(M)}
	\le e^{\tau/(2h)}\|[P,\chi]u\|_{L^2(S_\tau)} .
\end{equation}
Moreover, there exists a constant $C_\tau>0$ (depending only on $P$ and on the cutoff $\chi$) such that
\begin{equation}\label{eq:commutator-local}
	\|[P,\chi_\tau]u\|_{L^2(S_\tau)}\le C_\tau\,\|u\|_{H^1(S_\tau)} .
\end{equation}
For a standard cutoff varying on a length scale $\tau$, one has $C_\tau\lesssim \tau^{-2}$. Combining \eqref{eq:commutator-weighted}--\eqref{eq:commutator-local} yields
\[
\|e^{\varphi/h}[P,\chi]u\|_{L^2(M)}
\le C_\tau\,e^{\tau/(2h)}\,\|u\|_{H^1(S_\tau)} .
\]
Using the above estimates and \eqref{eq:Pu-weighted}, together with
\eqref{eq:triangular-commutator}, we obtain
\begin{equation}
	\label{eq:right-hand-bound}
	\|e^{\varphi/h}P(\chi u)\|_{L^2(M)}
	\lesssim
	e^{1/h}\,\|Pu\|_{L^2(M)}
	+
	C_\tau\,e^{\tau/(2h)}\,\|u\|_{H^1(S_\tau)}.
\end{equation}

\medskip
\noindent\textbf{Step~6: Conclusion.}
Combining the lower bound \eqref{eq:lhs-lowerbound-H1} with the upper bound
\eqref{eq:right-hand-bound}, we obtain
\[
h\,e^{\tau/h}\,\|u\|_{H^1(M_\tau)}
\;\le\;
C_\tau\,h \left( 
e^{1/h}\,\|Pu\|_{L^2(M)}
+
e^{\tau/(2h)}\,\|u\|_{H^1(S_\tau)} \right) .
\]
Dividing both sides by \(h\,e^{\tau/h}\), this yields
\begin{equation}
	\label{eq:final-stability}
	\|u\|_{H^1(M_\tau)}
	\le
	C_\tau\!\left(
	e^{(1-\tau)/h}\,\|Pu\|_{L^2(M)}
	+
	e^{-\tau/(2h)}\,\|u\|_{H^1(S_\tau)}
	\right).
\end{equation}
This concludes the proof.
\end{proof}



\begin{thebibliography}{99}


\bibitem{Aubin} Aubin, T. \emph{Non-linear analysis on manifolds. Monge-Amp\`ere equations}, Springer 1982, 204 pp.
\bibitem{Ale1988} Alessandrini, G., \emph{Stable determination of conductivity by boundary measurements}, Appl. Anal. 27 (1988), no. 1-3, 153-172.  
\bibitem{AlGa2001} Alessandrini, G., Gaburro, R., \emph{Determining conductivity with special anisotropy by boundary measurements}, SIAM J. Math. Anal. 33 (2001), no. 1, 153-171.
\bibitem{AlGa2009} Alessandrini, G., Gaburro, R., \emph{The local Calder\'on problem and the determination at the boundary of the conductivity}, Comm. Partial Differential Equations 34 (2009), no. 7-9, 918-936. 
\bibitem{Ale1997}  Alessandrini, G.,  \emph{Examples of instability in inverse boundary-value problems}, Inverse Problems 13 (1997), no. 4, 887-897.
\bibitem{Ale2007} Alessandrini, G., \emph{Open issues of stability for the inverse conductivity problem}, J. Inverse Ill-Posed Probl. 15 (2007), no. 5, 451-460. 

\bibitem{Bourgeois2010}
Bourgeois, L.,
\emph{About stability and regularization of ill-posed elliptic Cauchy problems: the case of \(C^{1,1}\) domains},
ESAIM, Math. Model. Numer. Anal. \textbf{44} (2010), no.~4, 715--735.

\bibitem{CDR2016} Caro, P., Dos Santos Ferreira, D., Ruiz, A., \emph{Stability estimates for the Calderón problem with partial data}, J. Differential Equations 260 (2016), no. 3, 2457-2489. 
\bibitem{CGR2013} Caro, P., García, A., Reyes, J., \emph{Stability of the Calderón problem for less regular conductivities}, J. Differential Equations 254 (2013), no. 2, 469-492.
\bibitem{CaSa2014} Caro, P., Salo, M., \emph{Stability of the Calder\'on problem in admissible geometries}, Inverse Probl. Imaging 8 (2014), no. 4, 939-957.
\bibitem{DKN2} Daud\'e T., Kamran N., Nicoleau F., \emph{Non-uniqueness results in the anisotropic Calderon problem with data measured on disjoints sets}, Ann. Inst. Fourier (Grenoble) 69 (2019), no. 1, 119-170.
\bibitem{DKN3} Daud\'e T., Kamran N., Nicoleau F., \emph{On the hidden mechanism behind non-uniqueness for the anisotropic Calder\'on problem with data on disjoint sets}, (2017), 
Ann. Henri Poincar\'e 20 (2019), no. 3, 859-887
\bibitem{DKN4} Daud\'e T., Kamran N., Nicoleau F., \emph{A survey of non-uniqueness results for the anisotropic Calder\'on problem with disjoint data}, 
Harvard CMSA Series in Mathematics, Volume 2: Nonlinear Analysis in Geometry and Applied Mathematics, ed. T. Collins and S.-T. Yau, (2018).
\bibitem{DKN5} Daud\'e T., Kamran N., Nicoleau F.,  \emph {Francois Separability and symmetry operators for Painlev\'e metrics and their conformal deformations}, SIGMA Symmetry Integrability Geom. Methods Appl. 15 (2019), Paper No. 069, 42 pp.
\bibitem{DKN6} Daud\'e T., Kamran N., Nicoleau F.,  \emph{ On nonuniqueness for the anisotropic Calder\'on problem with partial data}, Forum Math. Sigma 8 (2020), Paper No. e7, 17 pp.
\bibitem{DKN7} Daud\'e T., Kamran N., Nicoleau F.,  \emph{Stability in the inverse Steklov problem on warped product Riemannian manifolds}, J. Geom. Anal. 31 (2021), no. 2, 1821-1854.
\bibitem{DKN8} Daud\'e T., Kamran N., Nicoleau F.,  \emph{Local H\"older stability in the inverse Steklov and Calder\'on problems for radial Schr\"odinger operators and quantified resonances}, Ann. Henri Poincar\'e 25 (2024), no. 8, 3805-3830. 
\bibitem{DKN2026} Daud\'e T., Kamran N., Nicoleau F.,  \emph{Stability in the inverse anisotropic Calder\'on problem on warped product manifolds}, in preparation (2026).  
\bibitem{DKSU2009} Dos Santos Ferreira, D., Kenig, C., Salo, Mi., Uhlmann, G., \emph{Limiting Carleman weights and anisotropic inverse problems}, Invent. Math. 178 (2009), no. 1, 119-171.
\bibitem{DKLS2016} Dos Santos Ferreira, D., Kurylev, Y., Lassas, M., Salo, M., \emph{The Calderón problem in transversally anisotropic geometries}, J. Eur. Math. Soc. (JEMS) 18 (2016), no. 11, 2579-2626.
\bibitem{FY} Freiling G., Yurko V., \emph{Inverse Sturm-Liouville problems and their applications},  Nova Science Publishers, Inc., Huntington, NY, 2001. x+356 pp
\bibitem{Gen2022} Gendron, G. \emph{Stability estimates for an inverse Steklov problem in a class of hollow spheres}, Asymptot. Anal. 126 (2022), no. 3-4, 323-377. 
\bibitem{Gui2017} Guillarmou, C., \emph{Lens rigidity for manifolds with hyperbolic trapped sets}, J. Amer. Math. Soc. 30 (2017), no. 2, 561-599.
\bibitem{HeWa2016} Heck, H., Wang, J.-N., \emph{Optimal stability estimate of the inverse boundary value problem by partial measurements}, Rend. Istit. Mat. Univ. Trieste 48 (2016), 369-383. 

\bibitem{Isakov2006}
Isakov V., \emph{Inverse Problems for Partial Differential Equations},
Applied Mathematical Sciences, $\mathbf{127}$,
Springer, New York (2006).

\bibitem{KY2002} Kang, H., Yun, K., \emph{Boundary determination of conductivities and Riemannian metrics via local Dirichlet-to-Neumann operator}, SIAM J. Math. Anal. 34 (2002), no. 3, 719-735. 

\bibitem{KS2013} Kenig C., Salo M, \emph{The Calderon problem with partial data on manifolds and applications}, Analysis \& PDE $\mathbf{6}$, no. 8, (2013), 2003-2048.

\bibitem{KRS2021} Koch, H., R\"uland, A.,Salo, M., \emph{On instability mechanisms for inverse problems}, Ars Inven. Anal., 2021, no. 7, 93, 2769-8505.

\bibitem{LiLu2026}
Li, Z., L\"u, Q.,
\emph{Carleman estimates for second-order elliptic operators with limiting weights: An elementary approach},
Sci. China Math., 69 (2026), 251--268.

\bibitem{LionsMagenes1972}
Lions, J.-L., Magenes, E.,
\emph{Non-homogeneous boundary value problems and applications. Vol.~I},
Springer-Verlag, Berlin--Heidelberg--New York, 1972.



 
\bibitem{Man2001} Mandache, N., \emph{Exponential instability in an inverse problem for the Schrödinger equation}, Inverse Problems 17 (2001), no. 5, 1435-1444.
\bibitem{Mar2011} Marchenko, V. A. \emph{Sturm-Liouville operators and applications}, Revised edition. AMS Chelsea Publishing, Providence, RI, 2011. xiv+396 pp. 
\bibitem{Nov2011} Novikov, R. G., \emph{New global stability estimates for the Gel'fand-Calder\'on inverse problem}, Inverse Problems 27 (2011), no. 1, 015001, 21 pp


\bibitem{PSUZ2019} Paternain, G., Salo, M. Uhlmann, G., Zhou, H., \emph{The geodesic X-ray transform with matrix weights}, Amer. J. Math. 141 (2019), no. 6, 1707-1750.
\bibitem{Sha1994} Sharafutdinov, V. A., \emph{Integral Geometry of Tensor Fields}, VSP, Utrecht (1994).
\bibitem{Te} Teschl G., \emph{Mathematical Methods in Quantum Mechanics}, Graduate Studies in Mathematics Vol. 99, AMS Providence, Rhode Island, (2009).
\bibitem{U1} Uhlmann G., \emph{Electrical impedance tomography and Calderon's problem}, Inverse Problems $\mathbf{25}$, (2009), 123011, 39p.
\bibitem{U2} Uhlmann G., \emph{Inverse problems: seeing the unseen}, Bull. Math. Sci. $\mathbf{4}$ (2014), no. 2, 209-279. 
\bibitem{UhVa2016} Uhlmann, G., Vasy, A., \emph{The inverse problem for the local geodesic ray transform}, Invent. Math.  (2016), 205, 83-120.

\end{thebibliography}

\noindent \footnotesize{Universit\'e Marie et Louis Pasteur, CNRS, LmB (UMR 6623), F-25000, Besan{\c{c}}on, France. \\
CNRS - Universit\'e de Montr\'eal CRM - CNRS \\	
\emph{Email adress}: thierry.daude@univ-fcomte.fr \\

\noindent Department of Mathematics and Statistics, McGill University,\\ \small  Montreal, QC, H3A 0B9, Canada.  \\
\emph{Email adress}: niky.kamran@mcgill.ca \\

\noindent Laboratoire de Math\'ematiques Jean Leray, UMR CNRS 6629, \\ \small 2 Rue de la Houssini\`ere BP 92208, F-44322 Nantes Cedex 03 \\
\emph{Email adress}: francois.nicoleau@univ-nantes.fr}

\end{document}